\newtheorem{thm}{Theorem}[section]  
\newtheorem*{un-no-thm}{Theorem}
\newtheorem*{un-no-ex}{Example}
\newtheorem*{un-no-defn}{Definition} 
\newtheorem{cor}[thm]{Corollary}     % Numbered along with thm
\newtheorem{lem}[thm]{Lemma}         % Numbered along with thm
\newtheorem{prop}[thm]{Proposition}  
\newtheorem{add}[thm]{Addendum}
\newtheorem{bigthm}{Theorem}
\newtheorem{bigcor}[bigthm]{Corollary}
\theoremstyle{definition}
\newtheorem{defn}[thm]{Definition}   % Numbered along with thm
\theoremstyle{definition}
\theoremstyle{definition}
\theoremstyle{remark}
\newtheorem{rem}[thm]{Remark}
\newtheorem*{out}{Outline}
\newtheorem*{intro-rem}{Remark}
\newtheorem*{intro-rems}{Remarks}
\newtheorem{ex}[thm]{Example}
\DeclareMathOperator*{\hocolim}{hocolim}
\DeclareMathOperator*{\colim}{colim}
\DeclareMathOperator{\PL}{PL}
\DeclareMathOperator{\Or}{\! O}
\DeclareMathOperator{\Gr}{\! G}
\DeclareMathOperator{\SGr}{\! SG}
\DeclareMathOperator{\SOr}{\! SO}
\DeclareMathOperator{\SF}{\! SF}
\DeclareMathOperator{\map}{map}
\DeclareMathOperator{\GL}{GL}
\DeclareMathOperator{\pd}{pd}
\DeclareMathOperator{\cat}{cat}
\begin{document}
\title{On Embeddings and Acyclic Maps}
%\date{\today, \currenttime} 
\author{John R. Klein} 
\address{Wayne State University,
Detroit, MI 48202} 
\email{klein@math.wayne.edu} 

\thanks{}
 \subjclass[2020]{Primary: 58D10, 57R40; Secondary: 57P10, 55P25, 55N25}
\begin{abstract} Given an acyclic map $X\to Y$ of closed manifolds dimension $d$,
we study the relationship between the embeddings
of $Y$ in $S^{n}$ with those of $X$ in $S^{n}$ when $n-d \ge 3$. 

The approach taken here is to first solve the
Poincar\'e duality space variant of the problem. We then apply the surgery machine to obtain
the corresponding results for manifolds.
Thereafter, we focus on the case when $X$ is a smooth homology sphere and deduce results about the homotopy
type of the space of block embeddings of $X$ in a sphere.
\end{abstract}

\maketitle%\pagestyle{fancy}
\setlength{\parindent}{15pt}
\setlength{\parskip}{1pt plus 0pt minus 1pt}
\def\smsh{\wedge}
\def\flush{\flushpar}
\def\dbslash{/\!\! /}
\def\:{\colon\!}
\def\Bbb{\mathbb}
\def\bold{\mathbf}
\def\cal{\mathcal}
\def\End{\text{\rm End}}
\def\stableto{\mapstochar \!\!\to}
\def\lr{{\ell r}}
\def\ad{\text{\rm{ad}}}
\def\tr{\sigma}
\def\Top{\text{\rm Top}}
\def\scr{\mathscr}

\setcounter{tocdepth}{1}
{
  \hypersetup{linkcolor=olive}
  \tableofcontents
}

\addcontentsline{file}{sec_unit}{entry}
%\endtableofcontents

\section{Introduction} \label{sec:intro}
This paper stems from the observation that an assortment of golden age results about the existence of embeddings of manifolds in Euclidean space
require a connectivity assumption on the manifold \cite{Haefliger-plongement}, \cite{Haefliger-bulletin-embedding}, \cite{Haef-Hirsch}, \cite{Hudson-PL-embed}. As connectivity is a homotopy property,  one might wonder whether or not a weaker homological condition
is sufficient. In certain circumstances, we will show that the answer is yes, and a description of our results in this area appears
in \S\ref{subsec:exist}. Another undertaking of the paper is
to study the homotopy type of $E^b(\Sigma,S^{n+q})$, the space of block embeddings of a smooth homology $n$-sphere
$\Sigma$ in the sphere $S^{n+q}$; see \S\ref{subsec:homology-sphere-embedding}. In the case  $\Sigma = S^n$, we also provide a complete computation of the rational homotopy groups
of $E^b(S^n,S^{n+q})$ for certain values of $n$ and $q$. The outcome of the computation appears in \S\ref{subsec:computation}.
The results stated in  \S\ref{subsec:exist} and \S\ref{subsec:homology-sphere-embedding}  involve the idea of  {\it acyclic map}.

A map of topological spaces $f\: X\to Y$ is said to be {\it acyclic}  if for every system of local coefficients $\scr E$ on $Y$, the induced homomorphism
\[
f_\ast \: H_\ast(X;f^\ast\!\scr E) \to H_\ast(Y;\scr E)
\]
is an isomorphism.  The prototypical example of an acyclic map is Quillen's plus construction
$X\to X^+$ \cite[\S2]{Wagoner-delooping}.  Acyclic maps were studied in depth by  Hausmann and Husemoller \cite{Hausmann-Husemoller}; see also
the more recent work of Raptis \cite{Raptis}.

If $f\: X \to Y$ is an acyclic map such that $X$ is a $d$-dimensional Poincar\'e duality space in the sense of Wall, then so is $Y$ (see Lemma \ref{lem:target-pd} below). 
 In fact, by a handles version of the plus-construction, if  $X$ has the homotopy type of a closed smooth $d$-manifold,
then $Y$ also has the homotopy type of a closed smooth $d$-manifold, provided that $d \ge 5$ (cf.~\cite[\S5]{Hausmann-homology-surgery}).

In the case when $X$ is a smooth homology $n$-sphere,
 this paper generalizes some of the results of  Hausmann's 1972 Comptes Rendus note 
 \cite{Hausmann_homology-sphere} on the set of isotopy classes embeddings of $X$ in the sphere $S^{n+q}$. 
 However, the proofs of these results never appeared. 
 The current work, in particular, not only rectifies this gap in the literature, but  also proves results about the homotopy type of the block embedding space. Hausmann briefly hints that his results are proved using the embedding theorems of Hudson and Irwin.
By contrast,  our approach is to combine some of the homotopy theory of  Poincar\'e embeddings with the Browder-Casson-Sullivan-Wall surgery 
machine \cite[chap.~11]{Wall_book}.

\subsection{Existence theorems} \label{subsec:exist}
A space $X$ is {\it $r$-acyclic} if
\[
\tilde H_\ast(X;\Bbb Z)  = 0
\]
 in degrees $\ast \le r$, where $\tilde H_\ast$ denotes reduced singular homology. 

Our first main result is about Poincar\'e embeddings; it replaces the 
connectivity condition appearing in \cite[thm.~A]{haef} by the corresponding homological one in the
case when the target is a sphere. For the definition of Poincar\'e embedding, see \S\ref{sec:prelim}.

 \begin{bigthm} \label{bigthm:pd-hom-connected} Let $X$  be a Poincar\'e duality space of dimension $d$. Assume that $X$ is $r$-acyclic, $r\le d-3$.
 Then there is a Poincar\'e embedding of $X$ in $S^{2d-r+1}$.
 \end{bigthm}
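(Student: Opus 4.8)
The plan is to trade the homological hypothesis on $X$ for a homotopical one by means of Quillen's plus construction, then to invoke the connectivity case \cite[thm.~A]{haef}, and finally to transfer the resulting Poincar\'e embedding back along an acyclic map. First I would dispose of the case $r=0$: there ``$0$-acyclic'' means merely connected, so the conclusion is already \cite[thm.~A]{haef} (or a classical Poincar\'e-embedding bound) applied to $X$ itself. So assume $r\ge 1$. Then $\tilde H_1(X;\mathbb Z)=0$, hence $\pi:=\pi_1(X)$ is perfect, and I would take the plus construction $f\colon X\to Y:=X^+$ with respect to $\pi$. Being acyclic, $f$ has (by Lemma~\ref{lem:target-pd}) target a $d$-dimensional Poincar\'e duality space $Y$; moreover $Y$ is simply connected and $\tilde H_\ast(Y;\mathbb Z)\cong\tilde H_\ast(X;\mathbb Z)$ vanishes through degree $r$, so $Y$ is $r$-connected by Hurewicz.

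Now $Y$ is an $r$-connected $d$-dimensional Poincar\'e duality space with $r\le d-3$, so \cite[thm.~A]{haef} supplies a Poincar\'e embedding of $Y$ in $S^{2d-r+1}$: a spherical fibration $\nu_Y$ over $Y$ with fibre $S^{d-r}$, a Poincar\'e pair $(C_Y,S(\nu_Y))$ of formal dimension $2d-r+1$, and a homotopy equivalence $D(\nu_Y)\cup_{S(\nu_Y)}C_Y\xrightarrow{\,\sim\,}S^{2d-r+1}$. I would then pull this back to $X$: put $\nu:=f^\ast\nu_Y$. As the pullback of a fibration along an acyclic map is again acyclic on total spaces, the maps $S(\nu)\to S(\nu_Y)$ and $D(\nu)\to D(\nu_Y)$ are acyclic, and I define the complement of $X$ by grafting $C_Y$ onto the mapping cylinder of $S(\nu)\to S(\nu_Y)$,
\[
C\ :=\ \mathrm{Cyl}\!\big(S(\nu)\to S(\nu_Y)\big)\ \cup_{S(\nu_Y)}\ C_Y ,
\]
so that $\partial C=S(\nu)$ and $C\simeq C_Y$ (in particular $C$ is simply connected). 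Since $S(\nu)$ is automatically a Poincar\'e space (sphere bundle of $\nu$ over the Poincar\'e space $X$) and $f$ is $\pi_1$-surjective, a five-lemma comparison of the long exact sequences of $(C,S(\nu))\to(C_Y,S(\nu_Y))$, together with naturality of the cap product, transports the fundamental class of $(C_Y,S(\nu_Y))$ to show $(C,S(\nu))$ is a Poincar\'e pair of formal dimension $2d-r+1$. The pair $(D(\nu),S(\nu))$ is a Poincar\'e pair as well (a disk bundle over the Poincar\'e space $X$), so it remains to identify $W:=D(\nu)\cup_{S(\nu)}C$ with a sphere. The evident map $W\to D(\nu_Y)\cup_{S(\nu_Y)}C_Y\simeq S^{2d-r+1}$, glued from acyclic maps along an acyclic map, is acyclic by a Mayer--Vietoris comparison; and $W$ is simply connected, since by van Kampen $\pi_1 S(\nu)\to\pi_1 C$ is trivial, factoring through the simply connected $S(\nu_Y)$-end of $C_Y$. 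Hence $W\simeq S^{2d-r+1}$ by Whitehead, and $(\nu,C)$ is the desired Poincar\'e embedding of $X$.

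The hard part will be the transfer step: checking that the grafted complement $C$ genuinely satisfies Poincar\'e--Lefschetz duality, and that $W$ is really a homotopy sphere rather than merely a homology sphere. Both hinge on the formal calculus of acyclic maps --- their closure under pullback along fibrations and under gluing, their $\pi_1$-surjectivity, and the fact that $\mathrm{Th}(\nu)$ is simply connected (the fibre of $\nu$ has positive dimension), so that its reduced homology is, via the Thom isomorphism and Poincar\'e duality on $X$, the \emph{untwisted} cohomology of $X$, which the hypothesis does control --- together with a careful accounting of the fundamental classes under the equivalence $C\simeq C_Y$. One should also confirm that the fibre-dimension bookkeeping for $\nu$ is consistent with an embedding in $S^{2d-r+1}$, but this is automatic once $W\simeq S^{2d-r+1}$.
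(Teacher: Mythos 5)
Your proposal is correct and takes essentially the same route as the paper: pass to the plus construction $Y=X^{+}$, which is an $r$-connected Poincar\'e duality space of dimension $d$ by Lemma \ref{lem:target-pd}, invoke \cite[thm.~A]{haef} to Poincar\'e embed $Y$ in $S^{2d-r+1}$, and pull the embedding back along the acyclic map $X\to Y$. The transfer step you work out by hand (pulling back the normal data, grafting the complement, and checking via Mayer--Vietoris, van Kampen and Whitehead that the glued space is $S^{2d-r+1}$) is exactly the content and proof of the paper's Lemma \ref{lem:induced}, which in the paper's homotopy-coCartesian formulation of Poincar\'e embedding makes the Poincar\'e--Lefschetz verification for the complement unnecessary.
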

 
 The next result  says that if $f\: X \to Y$ is an acyclic map of closed smooth manifolds, then in the metastable range, $X$ embeds in a sphere if 
 $Y$ does. Furthermore, one has a converse when $X$ is orientable.
 
 \begin{bigthm} \label{bigthm:metastable} Let $f\: X\to Y$ be an acyclic map of connected closed smooth $d$-manifolds.
Assume that $3d+ 3\le 2n$. 
Then $X$ smoothly embeds in $S^n$ if $Y$ does.
The converse holds if $X$ is orientable.
\end{bigthm}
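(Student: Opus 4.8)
The plan is the one announced in the abstract: pass to Poincar\'e embeddings, carry the embedding data across the acyclic map $f$, and then feed the result into the Browder-Casson-Sullivan-Wall machine. I may assume $d\geq 3$, since for $d\leq 2$ an acyclic map of closed manifolds is a homotopy equivalence and the statement is vacuous; then $3d+3\leq 2n$ forces the codimension $n-d$ to be at least $3$, and, the fibre dimension $n-d-1$ being at least $2$, the projection of every $(n-d-1)$-spherical fibration induces an isomorphism on $\pi_{1}$. Two properties of acyclic maps will be used throughout: they are closed under cobase change (and, more generally, satisfy a gluing lemma for homotopy pushouts), and any map from an acyclic space into $BO(k)$ or $BG(k)$ is null-homotopic---it annihilates $\pi_{1}$, since an acyclic space has perfect fundamental group, hence factors through the simply connected $BSO(k)$ or $BSG(k)$, where it is inessential. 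The latter shows that $f^{\ast}$ is a bijection on homotopy classes of maps into $BO(k)$ and $BG(k)$ (compare \cite{Hausmann-Husemoller}, \cite{Raptis}).

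Assume first that $Y$ smoothly embeds in $S^{n}$. Its underlying Poincar\'e embedding consists of a normal vector bundle $\nu_{Y}$, a complement $C_{Y}$ with $\partial C_{Y}=S\nu_{Y}$, a homotopy equivalence $Y\cup_{S\nu_{Y}}C_{Y}\simeq S^{n}$, and the structure making $(C_{Y},S\nu_{Y})$ an $n$-dimensional Poincar\'e pair. I would set $\nu_{X}:=f^{\ast}\nu_{Y}$, let $S\nu_{X}\to S\nu_{Y}$ be the induced (acyclic) map of sphere bundles, and let $C_{X}$ be the mapping cylinder of $S\nu_{X}\to S\nu_{Y}\hookrightarrow C_{Y}$, so that $C_{X}\simeq C_{Y}$ while $\partial C_{X}=S\nu_{X}$. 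As $S\nu_{X}\to S\nu_{Y}$ is acyclic, $(C_{X},S\nu_{X})$ has the same homology and cohomology with arbitrary local coefficients as $(C_{Y},S\nu_{Y})$, so the orientation character and the fundamental class transport, and by naturality of the cap product $(C_{X},S\nu_{X})$ is again an $n$-dimensional Poincar\'e pair. One then checks that $Z:=X\cup_{S\nu_{X}}C_{X}$ is homotopy equivalent to $S^{n}$: comparing its defining homotopy pushout with that of $Y\cup_{S\nu_{Y}}C_{Y}\simeq S^{n}$ through the acyclic maps $X\to Y$, $S\nu_{X}\to S\nu_{Y}$ and $\mathrm{id}_{C_{Y}}$ shows, via Mayer-Vietoris, that $Z\to S^{n}$ is a homology isomorphism with all coefficients, while van Kampen, together with $\pi_{1}(S\nu_{X})\cong\pi_{1}(X)$ and $\pi_{1}(S\nu_{Y})\cong\pi_{1}(Y)$, shows $Z$ to be simply connected; hence $Z\simeq S^{n}$. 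This produces a Poincar\'e embedding of $X$ in $S^{n}$ of codimension $\geq 3$ with vector-bundle normal data, and the surgery machine---which, under the metastable hypothesis, converts such a Poincar\'e embedding into a smooth embedding---yields $X\hookrightarrow S^{n}$.

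For the converse, suppose $X$ is orientable---equivalently $Y$ is, since $f$ induces an isomorphism on $H^{1}(-;\mathbb{Z}/2)$---and that $X$ smoothly embeds in $S^{n}$, with normal bundle $\nu_{X}$, complement $C_{X}$ a compact manifold with $\partial C_{X}=S\nu_{X}$, and $X\cup_{S\nu_{X}}C_{X}\simeq S^{n}$. Because $f^{\ast}\colon [Y,BO(n-d)]\to[X,BO(n-d)]$ is a bijection, $\nu_{X}$ descends to a vector bundle $\nu_{Y}$ over $Y$ with $f^{\ast}\nu_{Y}\cong\nu_{X}$; and, factoring $f$ as $X\to X^{+}_{P}\to Y$ with $P=\ker(\pi_{1}X\to\pi_{1}Y)$ (the second map is then acyclic and a $\pi_{1}$-isomorphism, hence a homotopy equivalence), I may assume $Y=X^{+}_{P}$. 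Let $C_{Y}$ be the homotopy pushout of $C_{X}\leftarrow S\nu_{X}\to S\nu_{Y}$; this is again a plus construction, carried out on the manifold $C_{X}$ relative to its boundary (which passes from $S\nu_{X}$ to $S\nu_{Y}$), so by the handles version of the plus construction \cite{Hausmann-homology-surgery} it is realized by a compact manifold with boundary. As before, $(C_{Y},S\nu_{Y})$ is an $n$-dimensional Poincar\'e pair, and the acyclic map $S^{n}=X\cup_{S\nu_{X}}C_{X}\to Y\cup_{S\nu_{Y}}C_{Y}$ forces its target to be simply connected with the homology of $S^{n}$, hence homotopy equivalent to $S^{n}$. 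We thus obtain a Poincar\'e embedding of $Y$ in $S^{n}$ of codimension $\geq 3$ with bundle normal data, which the surgery machine upgrades to a smooth embedding $Y\hookrightarrow S^{n}$.

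The homotopy-theoretic transfer of the normal and complement data is routine once the two properties of acyclic maps recalled above are available, together with the behaviour of $\pi_{1}$ of sphere bundles of fibre dimension $\geq 2$. The crux---and the only place the metastable and orientability hypotheses are genuinely used---is the last step: Wall's passage (\cite[chap.~11]{Wall_book}) from a codimension $\geq 3$ Poincar\'e embedding with vector-bundle normal data to a smooth embedding. In the converse one must, moreover, realize the pushed-forward complement as a manifold whose boundary is \emph{exactly} the sphere bundle $S\nu_{Y}$, so that regluing it to the tubular neighbourhood of $Y$ reconstitutes a smooth $S^{n}$ (any incidental homotopy sphere being absorbed by removing a point and using $\Sigma^{n}\setminus\{\mathrm{pt}\}\cong\mathbb{R}^{n}$); it is here that the orientability of $X$ serves to keep the relevant surgery obstruction under control. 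I expect this surgery step, rather than the acyclic-map bookkeeping, to be the main obstacle.
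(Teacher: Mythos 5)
Your route is the paper's route: transfer the Poincar\'e embedding data across the acyclic map and then invoke the Browder--Casson--Sullivan--Wall machine. Your forward construction (pull back the normal data, compare Mayer--Vietoris sequences, use van Kampen, conclude the glued space is $S^n$) is exactly the paper's Lemma \ref{lem:induced}; your converse is the analogue of Lemma \ref{lem:converse}, with the harmless variation that you cobase-change the complement along $S\nu_X\to S\nu_Y$ instead of keeping the complement fixed and factoring the map $E\to C$ through $E'$. The descent of the normal bundle via the bijection $[Y,B\Or_{n-d}]\to[X,B\Or_{n-d}]$ is legitimate: it is \cite[prop.~3.1]{Hausmann-Husemoller} applied with $\pi_1(B\Or_{n-d})=\Bbb Z/2$ abelian and $\ker\pi_1(f)$ perfect. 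Up to this point your argument is sound and essentially the paper's.

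The problem is your final paragraph, where you leave the surgery step as an expected ``main obstacle'' and attach the orientability hypothesis to it. In fact that step is a ready-made citation: Wall's Corollary 11.3.2 \cite[chap.~11]{Wall_book} converts a codimension $\ge 3$ Poincar\'e embedding of a closed smooth manifold in $S^n$, in the metastable range $3d+3\le 2n$, directly into a smooth embedding. There is no need to realize the pushed-forward complement as a compact manifold with boundary $S\nu_Y$, no regluing, no homotopy sphere to absorb, and no residual surgery obstruction for orientability to control; your appeal to the handle version of the plus construction is in any case not justified as stated, since the passage from $(C_X,S\nu_X)$ to $(C_Y,S\nu_Y)$ changes the boundary and so is not a rel-boundary plus construction in the sense of \cite{Hausmann-homology-surgery}. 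In the paper, orientability of $X$ enters only in the homotopy-theoretic descent, to verify the $w_1$-hypothesis of Lemma \ref{lem:converse} (triviality of $\ker\pi_1(f)\to\Bbb Z/2$); in your version that condition is automatic because $\ker\pi_1(f)$ is perfect, which is fine, but attributing the hypothesis to a surgery obstruction is a misreading of where it is used. One small technical point worth recording if you argue via Wall's definition: simpleness of the Poincar\'e embedding is automatic in codimension $\ge 3$, as noted after Definition \ref{defn:PD-embedding}.
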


The following result is a variant of Theorem \ref{bigthm:pd-hom-connected} for smooth manifolds; note the additional restrictions.

\begin{bigthm}\label{bigthm:hom-connected} Let $X$ a closed smooth $d$-manifold. Assume that $X$ is $r$-acyclic with $d \ge 2r+3$.
Then $X$ smoothly embeds in $S^{2d-r}$.
\end{bigthm}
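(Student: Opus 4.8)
The plan is to reduce Theorem~\ref{bigthm:hom-connected} to the classical simply connected situation by means of Quillen's plus construction, and then to transport the resulting embedding back to $X$ along an acyclic map using Theorem~\ref{bigthm:metastable}.

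First I would dispose of the case $r=0$ separately: there $X$ is just a connected closed smooth $d$-manifold with $d\ge 3$, and the Whitney embedding theorem already produces a smooth embedding $X\hookrightarrow \Bbb R^{2d}\subset S^{2d}=S^{2d-r}$. So assume $r\ge 1$. Since $X$ is then $1$-acyclic we have $H_1(X;\Bbb Z)=0$, so $\pi_1(X)$ is perfect; I would form the plus construction $X\to X^+$ with respect to all of $\pi_1(X)$, making $X^+$ simply connected. As $X\to X^+$ is acyclic it is an isomorphism on integral homology, so $X^+$ is $r$-acyclic and hence, being simply connected, $r$-connected by the Hurewicz theorem. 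Because $d\ge 2r+3\ge 5$ and $X$ is a closed smooth $d$-manifold, the handle version of the plus construction recalled in \S\ref{sec:intro} (cf.\ \cite[\S5]{Hausmann-homology-surgery}) shows that $X^+$ has the homotopy type of a closed smooth $d$-manifold; fix such a $Y$ together with a homotopy equivalence $X^+\simeq Y$. The composite
\[
f\colon X\longrightarrow X^+\xrightarrow{\ \simeq\ }Y
\]
is then an acyclic map of connected closed smooth $d$-manifolds, with $Y$ being $r$-connected and (as it is simply connected) orientable.

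Next I would embed $Y$ using the golden-age results: since $Y$ is an $r$-connected closed smooth $d$-manifold and $d\ge 2r+2$, the embedding theorems of Haefliger and Haefliger--Hirsch \cite{Haefliger-plongement,Haef-Hirsch} provide a smooth embedding $Y\hookrightarrow S^{2d-r}$. Finally I would apply Theorem~\ref{bigthm:metastable} to $f\colon X\to Y$ with $n=2d-r$; its metastable hypothesis $3d+3\le 2n$ is equivalent to $d\ge 2r+3$ and so holds, so the embedding of $Y$ in $S^{2d-r}$ yields a smooth embedding of $X$ in $S^{2d-r}$, as required. (Observe that it is precisely Theorem~\ref{bigthm:metastable} that forces the hypothesis into the form $d\ge 2r+3$; the embedding of $Y$ itself needs only $d\ge 2r+2$.)

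The hard part will be the invocation of the classical embedding theorem for $Y$: I would need to pin down the exact golden-age statement that applies and verify that its hypotheses --- the connectivity range, orientability, and any normal-bundle or $\pi$-manifold condition --- are genuinely met by the manifold $Y$ coming out of the handle plus construction. Everything else is assembly of results already in hand, with the numerical coincidence $3d+3\le 2(2d-r)\iff d\ge 2r+3$ keeping all the ranges compatible throughout.
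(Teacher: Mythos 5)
Your proposal is correct and follows essentially the same route as the paper: handle the $r=0$ case by Whitney, then for $r\ge 1$ pass to the plus construction, realize it by a closed smooth $d$-manifold via Hausmann's handle plus construction (using $d\ge 2r+3\ge 5$), embed that $r$-connected manifold in $S^{2d-r}$ by Haefliger, and pull the embedding back to $X$ with Theorem~\ref{bigthm:metastable}. The only caveat is your parenthetical claim that the embedding of $Y$ needs only $d\ge 2r+2$: the Haefliger theorem cited in the paper is a metastable-range result requiring $d\ge 2r+3$, but since that inequality is a hypothesis of the theorem this aside does not affect your argument.
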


\begin{ex}[Smooth homology spheres] If $\Sigma^d$ is a smooth homology $n=d$-sphere,
then it is homologically $(d-1)$-connected. Apply Theorem \ref{bigthm:hom-connected} with $r = \lfloor(d-3)/2\rfloor$ to conclude
$\Sigma^d$ smoothly embeds in $S^n$, where $n = \lceil \frac{3}{2}(d+1)\rceil$ (cf.~\cite[thm.~1]{Hausmann_homology-sphere}).
\end{ex}

The next result is a version of Theorem \ref{bigthm:metastable} in the $\PL$ case. In this instance, one does not require a metastable range condition.

\begin{bigthm} \label{bigthm:PL} Let $f\: X\to Y$ be an acyclic map of connected closed $\PL$ $d$-manifolds and assume that
  $n \ge \max(5,d+3)$. Assume there exists a (locally flat) $\PL$ embedding of $Y$ in $S^n$. 
Then there is a $\PL$ embedding of $X$ in $S^n$.
The converse holds if $X$ is orientable.
\end{bigthm}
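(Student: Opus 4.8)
The plan is to follow the route advertised in the introduction: first establish the Poincar\'e-category input -- that a Poincar\'e embedding of $Y$ in $S^n$ transfers along the acyclic map $f$ to one of $X$ -- and then feed the result into the Browder--Casson--Sullivan--Wall surgery machine \cite[chap.~11]{Wall_book} to realize it by an honest locally flat $\PL$ embedding. The hypothesis $n\ge d+3$ puts everything in codimension $\ge 3$, and this is exactly what lets the surgery step succeed without a metastable-range condition (in contrast to the smooth Theorem \ref{bigthm:metastable}): in the $\PL$ category a codimension-$\ge 3$ Poincar\'e embedding of a $\PL$ manifold is realizable once the bundle-theoretic obstructions are dealt with, and codimension $\ge 3$ also makes the relevant complements simply connected. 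The hypothesis $n\ge 5$ is needed so that the surgery machinery runs and so that the $\PL$ manifold we ultimately build, being a $\PL$ homotopy $n$-sphere, is $\PL$-homeomorphic to $S^n$.

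For the forward implication, fix a locally flat $\PL$ embedding $Y\subset S^n$. A regular neighborhood of $Y$ is the total disk bundle $D(\nu_Y)$ of a $\PL$ $(n-d)$-block bundle $\nu_Y$ over $Y$, with sphere bundle $S(\nu_Y)=\partial D(\nu_Y)$ and complement $C_Y=S^n\setminus\interior D(\nu_Y)$, a compact $\PL$ $n$-manifold with $\partial C_Y=S(\nu_Y)$; since $n-d\ge 3$, general position gives $\pi_1 C_Y=1$ and $\pi_1 S(\nu_Y)\cong\pi_1 Y$, and $(C_Y,S(\nu_Y))$ together with $\nu_Y$ is a Poincar\'e embedding of $Y$ in $S^n$. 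Now set $\nu_X:=f^\ast\nu_Y$, a $\PL$ block bundle over $X$ with $\PL$ manifold disk and sphere bundles $D(\nu_X),S(\nu_X)$; the bundle map $\bar f\: S(\nu_X)\to S(\nu_Y)$ is acyclic, since acyclic maps are stable under base change along fibrations. Let $C_X$ be obtained from $C_Y$ by gluing on the mapping cylinder $\mathrm{cyl}(\bar f)$ along the target copy of $S(\nu_Y)=\partial C_Y$, and set $\partial C_X:=S(\nu_X)$. A Mayer--Vietoris comparison, using that $f$ and $\bar f$ induce isomorphisms on homology with all local coefficient systems, shows $C_X\to C_Y$ is a homology isomorphism; van Kampen, using that $n-d\ge 3$ makes $\pi_1 S(\nu_X)\cong\pi_1 X$, then gives $\pi_1 C_X=\pi_1 C_Y=1$, so $C_X\xrightarrow{\ \sim\ }C_Y$. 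Since $\pi_1 C_X=1$, the five lemma together with naturality of cap products upgrades the Poincar\'e--Lefschetz duality of $(C_Y,S(\nu_Y))$ to that of $(C_X,S(\nu_X))$ (a relative instance of Lemma \ref{lem:target-pd}), and a second Mayer--Vietoris argument identifies $D(\nu_X)\cup_{S(\nu_X)}C_X$ with $D(\nu_Y)\cup_{S(\nu_Y)}C_Y\simeq S^n$. Thus $X$ has a Poincar\'e embedding in $S^n$ whose normal spherical fibration already carries the $\PL$ block reduction $\nu_X$.

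It remains to realize this Poincar\'e embedding by a genuine locally flat $\PL$ embedding, i.e.\ to produce a compact $\PL$ $n$-manifold $\widehat C$ with $\partial\widehat C=S(\nu_X)$ and a homotopy equivalence of pairs $(\widehat C,S(\nu_X))\xrightarrow{\ \sim\ }(C_X,S(\nu_X))$ rel boundary for which $D(\nu_X)\cup_{S(\nu_X)}\widehat C$ is a $\PL$ manifold; it will automatically be a $\PL$ homotopy $n$-sphere, hence $\PL$-homeomorphic to $S^n$ since $n\ge 5$, and $X=(\text{zero section})\subset D(\nu_X)\subset S^n$ is the desired embedding. This is precisely the realization problem handled by \cite[chap.~11]{Wall_book}: the needed $\PL$ reduction of the Spivak normal fibration of $(C_X,S(\nu_X))$ is already available (namely $\bar f^\ast$ of the tautological one on the genuine manifold $C_Y$; it restricts correctly on $S(\nu_X)$ because $D(\nu_X)$, $S(\nu_X)$ and their tangent block bundles are all pulled back from $Y$ along $f$), so the only residual obstruction is a single surgery obstruction, which lies in $L_n(\Z)$ because $\pi_1 C_X=1$. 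I would show this obstruction vanishes by comparing with the given embedding of $Y$: the surgery problem for $(C_X,S(\nu_X))$ is obtained from the tautological, obstruction-zero problem for the genuine manifold pair $(C_Y,S(\nu_Y))$ by the homology-surgery operation associated to the acyclic map $\bar f$. Making this comparison precise is, I expect, the main obstacle; the key technical input is that an acyclic map of closed $\PL$ manifolds in codimension $\ge 3$ can be realized by an ``acyclic $h$-cobordism'', a consequence of Hausmann's homology surgery \cite{Hausmann-homology-surgery}, which is exactly what is needed to transport the manifold structure on $C_Y$ across $\bar f$ and build $\widehat C$.

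For the converse, assume $X$ is orientable and locally flatly $\PL$-embedded in $S^n$, with normal block bundle $\nu_X$ and complement $C_X$ (again simply connected, as $n-d\ge 3$). Since $f$ is acyclic it preserves orientation characters and is injective on $H^1(-;\Z/2)$, so $w_1(X)=0$ forces $w_1(Y)=0$ and $Y$ is orientable. Then $\nu_X$, being orientable, is classified by a map from $X$ to the ($1$-connected) classifying space for oriented locally flat $\PL$ $(n-d)$-block bundles; as $f$ is a $\Z$-homology equivalence and that classifying space is simply connected, obstruction theory shows the classifying map factors up to homotopy through $f$, so $\nu_X\cong f^\ast\nu_Y$ for a $\PL$ block bundle $\nu_Y$ over $Y$. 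Running the mapping-cylinder construction of the second paragraph with the roles of $X$ and $Y$ interchanged now produces a Poincar\'e embedding of $Y$ in $S^n$ with $\PL$ normal data, and a second application of the surgery machine -- with the same obstruction analysis, this time comparing against the genuine manifold $C_X$ -- realizes it by a locally flat $\PL$ embedding $Y\subset S^n$.
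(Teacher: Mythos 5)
Your overall route is the paper's route: convert the locally flat $\PL$ embedding of $Y$ into a Poincar\'e embedding, transfer it along the acyclic map (your first two paragraphs essentially re-prove Lemma \ref{lem:induced}, and your last paragraph parallels Lemma \ref{lem:converse}, with orientability of $X$ entering exactly where the paper uses it to verify the $w_1$-hypothesis of that lemma), and then realize the resulting Poincar\'e embedding by the $\PL$ Browder--Casson--Sullivan--Wall theorem. The gap is in the realization step. Wall's result \cite[cor.~11.3.1]{Wall_book} in the $\PL$ category, codimension $\ge 3$, is unconditional: a Poincar\'e embedding of a $\PL$ manifold in a $\PL$ manifold is induced by a locally flat $\PL$ embedding, with no normal invariant to be supplied and no surgery obstruction left for the user to kill. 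Instead of quoting this, you set up your own surgery problem on the complement, assert that ``the only residual obstruction is a single surgery obstruction in $L_n(\Bbb Z)$,'' and then concede that the comparison needed to show it vanishes is ``the main obstacle,'' to be handled by an unproved ``acyclic $h$-cobordism'' appeal to \cite{Hausmann-homology-surgery}. Since $L_n(\Bbb Z)\ne 0$ for $n$ even, this vanishing cannot be waved away; as written, the crucial step of the theorem is not proved. (The same incomplete step is reused in your converse.) The block-bundle bookkeeping you carry along --- pulling back $\nu_Y$, insisting on a $\PL$ reduction of the normal data of the complement --- is likewise unnecessary: in codimension $\ge 3$ the Poincar\'e embedding consists only of the spherical fibration and complement, and Wall's theorem produces the block bundle for you.

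The paper's proof is exactly the short version of your plan: a $\PL$ embedding of $Y$ gives a Poincar\'e embedding of $Y$ in $S^n$; Lemma \ref{lem:induced} induces a Poincar\'e embedding of $X$; \cite[cor.~11.3.1]{Wall_book} realizes it by a locally flat $\PL$ embedding (no metastable hypothesis, in contrast with Theorem \ref{bigthm:metastable}); and the converse applies Lemma \ref{lem:converse} (orientability of $X$ making its hypothesis automatic) followed again by \cite[cor.~11.3.1]{Wall_book}. If you replace your third paragraph by a citation of that corollary (together with the paper's remark on why simple homotopy theory can be avoided in codimension $\ge 3$), your argument closes up; alternatively, if you insist on re-deriving the realization statement, you must actually prove the obstruction comparison you only sketch, which is essentially reproving Wall's theorem.
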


\begin{ex}[$\PL$ homology spheres] Let $\Sigma^d$ be a $\PL$ homology $d$-sphere. If $d < 3$, then $\Sigma^d$ is $\PL$ homeomorphic to $S^d$ and
therefore $\Sigma^d$ embeds in $S^{d+1}$. 

If $d \ge 3$, then by Theorem \ref{bigthm:PL},
there is a $\PL$ embedding of $\Sigma^d$ in $S^{d+3}$. In fact,   \cite[thm.~1]{Hausmann_homology-sphere} asserts there is a $\PL$ embedding of $\Sigma^d$
 in $S^{d+1}$ if $d \ne 3$.
\end{ex}

\begin{ex}[Connected sums] Suppose that $M$ is a closed connected $\PL$ $d$-manifold and let $\Sigma^d$ is a $\PL$ homology $d$-sphere. 
Let $M \sharp \Sigma$ denote the connected sum. Then the evident map $M\sharp \Sigma^d \to M$ is acyclic.
By Theorem   \ref{bigthm:PL}, if  $n\ge \max(5, d+3)$  and $M$ embeds in $S^n$, so does $M\sharp \Sigma^d$.
\end{ex}

\subsection{Embedding spaces of homology spheres}\label{subsec:homology-sphere-embedding}
Let $\Sigma^n$ be a smooth homology $n$-sphere. For $q \ge 3$, we shall consider the space of {\it block embeddings}
\[
E^b(\Sigma,S^{n+q})\, ,
\]
i.e., the realization of the $\Delta$-set whose $k$-simplices are smooth embeddings of $(k+2)$-ads $\Sigma \times \Delta^k \to S^{n+q} \times \Delta^k$
(cf.~\cite[p.~121]{BLR} or \S\ref{sec:embedding-spaces} below).

We will investigate the homotopy type of $E^b(\Sigma,S^{n+q})$  in terms of more familiar spaces.
Kervaire showed that every smooth homology sphere $\Sigma$ is stably parallelizable \cite[p.~70]{Kervaire_homology-sphere}. This enables one to define a {\it stable normal invariant}
\[
\eta\: \Sigma \to \Gr\!/\Or\, ,
\]
in which the target is the homotopy fiber of $B\Or \to B\Gr$, where the latter is the map from the classifying space of stable
vector bundles to the classifying space of stable spherical fibrations
(cf.~\S\ref{sec:homology-sphere-embedding}). We define $ \ell_q(\eta)$ to be the space of solutions to the lifting problem
\[
\xymatrix{
&   \Gr_q\!/\Or_q \ar[d]\\
\Sigma \ar[r]_(.4)\eta \ar@{..>}[ur]   &\Gr\!/\Or
}
\]
in which $\Gr_q\!/\Or_q$ is the homotopy fiber of the map $B\Or_q\to B\Gr_q$ from the classifying space of   $q$-plane bundles to that of $(q-1)$-spherical fibrations.
Here we have taken the liberty of converting the map $\Gr_q\!/\Or_q \to \Gr\!/\Or$ into a fibration.

\begin{bigthm} \label{bigthm:fiber-sequence} Assume $q \ge 3$ and $n \ge 2$.
%{prop:fiber-sequence} 
Then there is a homotopy fiber sequence 
\begin{equation} \label{eqn:fiber-sequence}
 \ell_q(\eta) \to E^b(\Sigma,S^{n+q}) \to  \Gr_{n+q+1}\! /\Gr_q\, .
\end{equation}
In particular, $E^b(\Sigma,S^{n+q})$ is non-empty if and only if $\ell_q(\eta)$ is non-empty.
\end{bigthm}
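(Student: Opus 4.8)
The plan is to route everything through Poincar\'e embeddings and then feed the result into the Browder--Casson--Sullivan--Wall surgery machine \cite[chap.~11]{Wall_book}, as promised in the introduction. The key structural input --- and the one step where the homology-sphere hypothesis is decisive --- concerns the complement. If $\Sigma$ is (Poincar\'e-)embedded in $S^{n+q}$ with $q\ge 3$, then its complement $C$ is simply connected (general position in codimension $q\ge 3$) and, by Alexander duality, has the homology of $S^{q-1}$; hence $C$ is a homotopy $(q-1)$-sphere, and a short homology computation identifies the linking sphere (a fibre $S^{q-1}$ of the normal sphere fibration $S(\nu)=\partial C$) with a homotopy equivalence onto $C$. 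Thus $S^{n+q}$ is being cut up, homotopically, as $D(\nu)\cup_{S(\nu)}C$ with $\Sigma\hookrightarrow D(\nu)$ the core and $S^{q-1}\xrightarrow{\ \simeq\ }C$ the co-core: the homotopy-theoretic shadow of the join decomposition $S^{n+q}=S^{n}\ast S^{q-1}$. In particular a Poincar\'e embedding of $\Sigma$ is determined, up to data accounted for below, by its normal $(q-1)$-spherical fibration $\xi=S(\nu)$ together with the degree-one Thom collapse $S^{n+q}\to\mathrm{Th}(\xi)$; and since $\Sigma$ has the stable homotopy type of $S^{n}$ (the map $\Sigma\to\Sigma^{+}\simeq S^{n}$ is acyclic, hence a stable equivalence), Spanier--Whitehead duality makes the collapse essentially canonical once $\xi$ destabilizes the Spivak normal fibration.

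With this in hand, I would compare smooth block embeddings with Poincar\'e embeddings. Using the surgery machine in codimension $q\ge 3$ together with the homotopy theory of Poincar\'e embeddings developed earlier in the paper, the homotopy fibre of the forgetful map from $E^{b}(\Sigma,S^{n+q})$ to the Poincar\'e embedding space is a smoothing-theory space: one must upgrade the normal spherical fibration $\xi$ to a genuine $q$-plane bundle $\nu$ and realize the Poincar\'e complement by a smooth manifold-with-boundary compatibly. This is where the homology-sphere hypothesis pays off a second time: because the complement is a homotopy sphere, the $L$-theoretic surgery obstructions that a priori intervene either vanish or are absorbed by the bundle data, and the net effect of the comparison is to replace the spherical fibration $\xi$ by a $q$-plane bundle reduction of it. After this reduction, a point of $E^{b}(\Sigma,S^{n+q})$ consists of: a $q$-plane bundle $\nu$ over $\Sigma$; the (essentially determined) complement/Thom-collapse data; and, stably, the identification of $\nu$ with the stable normal bundle of $\Sigma$ together with the trivialization recorded by the stable normal invariant $\eta$.

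Now I would assemble the fibre sequence. The $q$-plane bundle $\nu$ together with the stable comparison with $\eta$ is precisely a solution of the lifting problem defining $\ell_{q}(\eta)$, i.e.\ a destabilization of $\eta\colon\Sigma\to\Gr\!/\Or$ along $\Gr_{q}\!/\Or_{q}\to\Gr\!/\Or$. What remains is the normal sphere-fibration together with its Thom-collapse/complement data, and --- using that $\Sigma$ is stably $S^{n}$, so that the normal data may be transported to the co-core side --- this is exactly a point of $\Gr_{n+q+1}\!/\Gr_{q}$, the homotopy fibre of $B\Gr_{q}\to B\Gr_{n+q+1}$: a trivialization of the $S^{n+q}$-fibration obtained by stabilizing $\xi$, taken modulo reparametrizing the $S^{q-1}$-factor. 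So I would define $E^{b}(\Sigma,S^{n+q})\to\Gr_{n+q+1}\!/\Gr_{q}$ by remembering this datum and identify its homotopy fibre over the canonical component with $\ell_{q}(\eta)$ through the dictionary above. All of this must be carried out at the level of the $\Delta$-sets defining $E^{b}$ (and of simplicial models for the Poincar\'e-embedding and smoothing spaces) so as to produce a genuine homotopy fibre sequence rather than a bijection on components. The final clause is then immediate: $\Gr_{n+q+1}\!/\Gr_{q}$ is non-empty and path-connected --- a one-line check on $\pi_{0}$ of the fibration $\Gr_{n+q+1}\!/\Gr_{q}\to B\Gr_{q}\to B\Gr_{n+q+1}$, since $\pi_{0}\Gr_{q}\to\pi_{0}\Gr_{n+q+1}$ is onto --- so exactness of \eqref{eqn:fiber-sequence} forces $E^{b}(\Sigma,S^{n+q})\ne\emptyset$ if and only if $\ell_{q}(\eta)\ne\emptyset$.

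The step I expect to be the main obstacle is the surgery-theoretic one: pinning down the homotopy fibre of the block-embedding-versus-Poincar\'e-embedding comparison, verifying that the surgery obstructions really do collapse so that this fibre reduces to the vector-bundle-reduction space $\Gr_{q}\!/\Or_{q}$ appearing in $\ell_{q}(\eta)$, and then doing the bookkeeping carefully enough to confirm that the stable comparison datum surviving the reduction is $\eta$ itself and not a twist of it. By contrast, the homotopy theory of the Thom-collapse maps and their $S$-duals, the identification of the residual normal datum with $\Gr_{n+q+1}\!/\Gr_{q}$, and the connectivity argument at the end should be comparatively routine given the Poincar\'e-embedding results already established.
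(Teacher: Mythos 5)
Your plan is essentially the paper's proof: combine the homotopy Cartesian Browder--Casson--Sullivan--Wall square \eqref{eqn:BCSW-square} with the identification $E^{\pd}(\Sigma,S^{n+q})\simeq \Gr_{n+q+1}\!/\Gr_q$ of Proposition \ref{prop:pd-equiv} (your join-decomposition picture), and then identify the homotopy fiber of $\scr I^b_q(\Sigma)\to \scr I^{\pd}_q(\Sigma)$ with $\ell_q(\eta)$ via the relative-lifting formalism of Lemma \ref{lem:two-descriptions-of-lifts} together with the description of $\eta$ as the difference of the Spivak and Kervaire trivializations. The one step you flag as the main obstacle is actually free of charge: in codimension $q\ge 3$, Theorem \ref{thm:smooth-BCSW} gives the Cartesian square with no $L$-theoretic obstructions to vanish and without any use of the homology-sphere hypothesis, so all that remains there is the bookkeeping confirming that the surviving stable comparison datum is $\eta$ itself.
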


We will also show that the fiber $ \ell_q(\eta)$ is $(2q-n-4)$-connected. As the base  $\, \Gr_{n+q+1}\!/\Gr_{q}$ is $(q-1)$-connected, it is in particular connected, and we have the following consequence:

\begin{bigcor} \label{bigcor:cor1} 
%\label{cor:cor1} 
The map $E^b(\Sigma,S^{n+q}) \to \Gr_{n+q+1}\!/\Gr_{q}$ is  $(2q-n-3)$-connected.
\end{bigcor}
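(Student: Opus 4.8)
The plan is to deduce the corollary formally from Theorem~\ref{bigthm:fiber-sequence} together with the two connectivity facts recorded just above its statement: that the fibre $\ell_q(\eta)$ is $(2q-n-4)$-connected, and that the base $\Gr_{n+q+1}\!/\Gr_q$ is $(q-1)$-connected and in particular connected. The content of the corollary is then the purely homotopy-theoretic principle that a fibre sequence over a connected base with a highly connected fibre has a highly connected structure map; the arithmetic takes care of the rest.

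Concretely, for any fibre sequence $F \to E \to B$ with $B$ connected and $F$ being $(m-1)$-connected, the long exact homotopy sequence
\[
\cdots \to \pi_{i+1}(B) \to \pi_i(F) \to \pi_i(E) \to \pi_i(B) \to \pi_{i-1}(F) \to \cdots
\]
shows that $\pi_i(E) \to \pi_i(B)$ is an isomorphism for $i \le m-1$ (both $\pi_i(F)$ and $\pi_{i-1}(F)$ vanish there) and an epimorphism for $i = m$ (since $\pi_{m-1}(F) = 0$), while $\pi_0(E) \to \pi_0(B)$ is onto because $B$ is connected and $E \ne \emptyset$ whenever $F \ne \emptyset$. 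Applying this with $F = \ell_q(\eta)$, $E = E^b(\Sigma,S^{n+q})$, $B = \Gr_{n+q+1}\!/\Gr_q$ and $m-1 = 2q-n-4$, i.e.\ $m = 2q-n-3$, shows that $E^b(\Sigma,S^{n+q}) \to \Gr_{n+q+1}\!/\Gr_q$ is $(2q-n-3)$-connected. There is nothing to prove when $2q-n-3 < 0$; when $2q-n-3 \ge 0$ the hypothesis that $\ell_q(\eta)$ is $(2q-n-4)$-connected (hence $(-1)$-connected) in particular gives $\ell_q(\eta) \ne \emptyset$, whence $E^b(\Sigma,S^{n+q}) \ne \emptyset$ and the statement on components is valid.

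So the corollary is immediate once the $(2q-n-4)$-connectivity of $\ell_q(\eta)$ is granted, and that estimate --- established separately, as signalled above --- is where the real work and the main obstacle lie; I would obtain it in three steps. \emph{First}, the metastable input: the homotopy fibre $W$ of $\Gr_q\!/\Or_q \to \Gr\!/\Or$ is $(2q-4)$-connected, equivalently $\Gr_q\!/\Or_q \to \Gr\!/\Or$ is a $(2q-3)$-connected map. This is a statement about stabilising fibrewise linearisations of $(q-1)$-spherical fibrations; it is \emph{not} implied by the much weaker connectivity of $\Gr_q \to \Gr$, and proving it --- by an EHP/Blakers--Massey-type range argument --- is the hardest part. \emph{Second}, the reduction to a sphere: since $\Sigma$ is a homology $n$-sphere its fundamental group is perfect, the plus construction yields an acyclic map $\Sigma \to \Sigma^{+} \simeq S^{n}$, and $\eta$ factors up to homotopy through it as $\Sigma \to S^{n} \xrightarrow{\ \bar\eta\ } \Gr\!/\Or$; as $W$ is in particular simply connected, the acyclic map induces a homotopy equivalence $\ell_q(\bar\eta) \to \ell_q(\eta)$, so it suffices to bound the connectivity of the space $\ell_q(\bar\eta)$ of sections over $S^{n}$ of the pullback fibration $\bar\eta^{*}(\Gr_q\!/\Or_q)$, whose fibre is $W$. \emph{Third}, the section-space computation: evaluation at the basepoint of $S^{n}$ exhibits $\ell_q(\bar\eta)$ as the total space of a fibration with base $W$ and fibre a twisted form of $\Omega^{n}W$, so its connectivity equals $\min(\operatorname{conn} W,\ \operatorname{conn} W - n) = (2q-4) - n = 2q-n-4$. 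Combining the three steps gives the required connectivity of $\ell_q(\eta)$, and the corollary follows from the formal argument above.
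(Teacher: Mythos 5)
Your proposal is correct and takes essentially the same route as the paper: it reduces the corollary, via the long exact sequence of the fibre sequence of Theorem~\ref{bigthm:fiber-sequence} over the connected base $\Gr_{n+q+1}\!/\Gr_q$, to the $(2q-n-4)$-connectivity of $\ell_q(\eta)$, and establishes that connectivity exactly as in Lemma~\ref{lem:fibFq} --- factor $\eta$ through $S^n$ using the acyclic map $\Sigma \to S^n$ (Hausmann--Husemoller), then use the evaluation fibration over $\mathfrak F_q$ with fibre $\Omega^n\mathfrak F_q$ together with the $(2q-4)$-connectivity of $\mathfrak F_q$. The only divergence is that the $(2q-3)$-connectivity of $\Gr_q\!/\Or_q \to \Gr\!/\Or$, which you single out as the hard step to be proved by an EHP/Blakers--Massey argument, is a classical fact that the paper simply cites (Haefliger; Wall, p.~124), so no new proof of it is required.
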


We also have a complementary result in degrees $\le q-2$. Let $\mathfrak F_q$ denote the homotopy fiber of the map $\Gr_q\!/\Or_q\to \Gr\!/\Or$.

\begin{bigthm} \label{bigthm:homotopy-grps}
%\label{cor:cor2} 
Assume that $q \ge 3$ and $n\ge 2$.
If $\ell_q(\eta)$ is non-empty, 
then there is an isomorphism
\[
\pi_j(E^b(\Sigma,S^{n+q})) \cong \pi_{j+n}(\mathfrak F_q)
\]
when $j \le q-2$.
\end{bigthm}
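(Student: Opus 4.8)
The plan is to combine Theorem~\ref{bigthm:fiber-sequence} with obstruction theory for the lifting space $\ell_q(\eta)$, the leverage being that $\Sigma$, as a homology sphere, has $\widetilde H^{*}(\Sigma;A)$ concentrated in degree $n$. I would begin by reducing to $\ell_q(\eta)$. Since $\ell_q(\eta)$ is non-empty by hypothesis, so is $E^b(\Sigma,S^{n+q})$, and I feed \eqref{eqn:fiber-sequence} into its long exact homotopy sequence. The base $\Gr_{n+q+1}/\Gr_q$ is $(q-1)$-connected, so for $j\le q-2$ both $\pi_j$ and $\pi_{j+1}$ of the base vanish, and the exact sequence yields isomorphisms $\pi_j(\ell_q(\eta))\cong\pi_j(E^b(\Sigma,S^{n+q}))$ for $j\le q-2$ (in degrees $0$ and $1$ one reads the exact sequence of pointed sets, respectively groups, using the vanishing of the low homotopy of the base). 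So it is enough to identify $\pi_j(\ell_q(\eta))$ with $\pi_{n+j}(\mathfrak F_q)$ in this range.

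Next I analyze $\ell_q(\eta)$, presented as the space of sections of the pullback $\eta^{*}P\to\Sigma$, where $P\to\Gr/\Or$ is the fibration obtained from $\Gr_q/\Or_q\to\Gr/\Or$, whose fiber is $\mathfrak F_q$. Fix a point $x_0\in\Sigma$ and a section $\sigma_0$ (available since $\ell_q(\eta)\neq\emptyset$). Evaluation at $x_0$ is a fibration
\[
\Gamma_{x_0}\ \longrightarrow\ \ell_q(\eta)\ \xrightarrow{\ \mathrm{ev}_{x_0}\ }\ \mathfrak F_q,
\]
with fiber $\Gamma_{x_0}$ the space of sections agreeing with $\sigma_0$ at $x_0$. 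Two observations make the obstruction theory for $\Gamma_{x_0}$ degenerate: first, $\Gr/\Or$ is simply connected, so the monodromy of $\eta^{*}P$ is trivial and the coefficient systems $\underline{\pi_t(\mathfrak F_q)}$ on $\Sigma$ are constant; second, $\Sigma$ is an (automatically orientable) homology $n$-sphere, so $H^{p}(\Sigma,\{x_0\};A)$ equals $A$ for $p=n$ and vanishes otherwise. Consequently the obstruction spectral sequence computing $\pi_{*}(\Gamma_{x_0},\sigma_0)$ is concentrated in the single column $p=n$; it carries no differentials and no extension problems, so $\pi_j(\Gamma_{x_0})\cong H^{n}(\Sigma,\{x_0\};\pi_{n+j}(\mathfrak F_q))\cong\pi_{n+j}(\mathfrak F_q)$ for all $j\ge0$. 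Because $\mathfrak F_q$ is highly connected the relevant groups are abelian and the cases $j=0,1$ are routine.

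To finish, I run the long exact homotopy sequence of the displayed fibration. Since $\mathfrak F_q$ is $(2q-4)$-connected---the connectivity input behind the assertion that $\ell_q(\eta)$ is $(2q-n-4)$-connected, and hence behind Corollary~\ref{bigcor:cor1}---we have $\pi_j(\mathfrak F_q)=\pi_{j+1}(\mathfrak F_q)=0$ whenever $j\le q-2$. Therefore $\pi_j(\ell_q(\eta))\cong\pi_j(\Gamma_{x_0})\cong\pi_{n+j}(\mathfrak F_q)$ in that range, and with the first reduction the theorem follows.

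The crux is the middle step: one must be sure the relative section space $\Gamma_{x_0}$ has homotopy groups \emph{exactly} $\pi_{n+j}(\mathfrak F_q)$. This hinges on (a) triviality of the coefficient systems, which I would deduce from $\pi_1(\Gr/\Or)=0$, and (b) $\widetilde H^{*}(\Sigma)$ being concentrated in degree $n$; together these force the skeletal obstruction tower for $\Gamma_{x_0}$ to have a single potentially non-trivial layer, so that no further obstruction or extension can intervene. The remaining work is routine manipulation of long exact sequences, with a little extra care in degrees $0$ and $1$ since those homotopy invariants are a priori only pointed sets.
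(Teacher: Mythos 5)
Your argument is correct, and its first half (using Theorem \ref{bigthm:fiber-sequence} together with the $(q-1)$-connectivity of $\Gr_{n+q+1}\!/\Gr_q$ to get $\pi_j(E^b(\Sigma,S^{n+q}))\cong\pi_j(\ell_q(\eta))$ for $j\le q-2$) is exactly what the paper does. Where you diverge is in the identification of $\pi_j(\ell_q(\eta))$ with $\pi_{j+n}(\mathfrak F_q)$. The paper proves a stronger structural statement (Lemma \ref{lem:fibFq}): using the Hausmann--Husemoller factorization of maps out of an acyclic map, $\eta$ factors through the plus construction $\Sigma\to S^n$, so $\ell_q(\eta)\simeq\ell_q(\eta')$ for a map $\eta'\:S^n\to \Gr\!/\Or$; evaluation at the basepoint of $S^n$ then gives a fibration over $\mathfrak F_q$ whose fiber, by the $n$-fold loop structure, is $\Omega^n\mathfrak F_q$, yielding the fiber sequence \eqref{eqn:fibFq} (which is then reused in the rational computation of Section \ref{sec:computation}). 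You instead stay on $\Sigma$ itself: evaluation at $x_0$ gives $\Gamma_{x_0}\to\ell_q(\eta)\to\mathfrak F_q$, and you compute $\pi_j(\Gamma_{x_0})\cong H^n(\Sigma,x_0;\pi_{n+j}(\mathfrak F_q))\cong\pi_{n+j}(\mathfrak F_q)$ by the Federer/obstruction spectral sequence, the collapse being forced by $\widetilde H^\ast(\Sigma)$ being concentrated in degree $n$ and by the constancy of coefficients (from $\pi_1(\Gr\!/\Or)=0$); then the $(2q-4)$-connectivity of $\mathfrak F_q$ gives $\pi_j(\Gamma_{x_0})\cong\pi_j(\ell_q(\eta))$ in the range $j\le q-2\le 2q-5$, i.e.\ $q\ge 3$, exactly as in the paper's final step. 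The trade-off: the paper's route is shorter, cites only \cite[prop.~3.1]{Hausmann-Husemoller}, and produces the actual fibration $\Omega^n\mathfrak F_q\to\ell_q(\eta)\to\mathfrak F_q$ rather than an isomorphism of homotopy groups; your route avoids the factorization lemma entirely at the cost of invoking the section-space spectral sequence, where you should note (as you implicitly do) that the standard convergence and fringe issues in low degrees are harmless here because $\Sigma$ is a finite $n$-dimensional complex and $\mathfrak F_q$ is simply connected for $q\ge 3$. Either way the same hypotheses are doing the work -- your concentration of $\widetilde H^\ast(\Sigma)$ in degree $n$ is precisely the acyclicity of $\Sigma\to S^n$ that powers the paper's factorization.
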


\begin{rem} The group $\pi_{j+n}(\mathfrak F_q)$ does not depend on $\Sigma$.
Consequently,
\[
\pi_j(E^b(\Sigma,S^{n+q})) \cong \pi_j(E^b(S^n,S^{n+q}))
\]
assuming $E^b(\Sigma,S^{n+q})$ is non-empty and $j \le q-2$. Note that the latter inequality is automatically satisfied when $j=0,1$.
The $j=0$ case appears in {\cite[thm.~2]{Hausmann_homology-sphere}}.
\end{rem}

The  results above also have implications for the space of (unblocked) smooth embeddings \[
E(\Sigma,S^{n+q})\, .
\] We topologize the latter
as the geometric realization of the simplicial set whose $k$-simplices are smooth embeddings
$\Sigma {\times} \Delta^k \to S^{n+k} {\times} \Delta^k$ which commute with projection to $\Delta^k$.
The inclusion map $E(\Sigma,S^{n+q}) \to E^b(\Sigma,S^{n+q})$  is 
$(q-2)$-connected (cf.~\cite[\S7.1]{GK-smooth}). Combining the latter with Theorem \ref{bigthm:homotopy-grps}, we have the following result.

\begin{bigcor} \label{bigcor:homotopy-grps}
%\label{bigcor:cor3}
%\label{cor:cor2} 
Assume that $q \ge 3$ and $n\ge 2$.
If $\ell_q(\eta)$ is non-empty, then
 there is an isomorphism
\[
\pi_j(E(\Sigma,S^{n+q})) \cong \pi_{j+n}(\mathfrak F_q)
\]
when $j\le q-3$.
\end{bigcor}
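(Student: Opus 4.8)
The plan is to deduce this corollary formally from Theorem~\ref{bigthm:homotopy-grps} together with the cited connectivity estimate for the forgetful map from embeddings to block embeddings. First I would record the basepoint bookkeeping: since $\ell_q(\eta)$ is non-empty by hypothesis, Theorem~\ref{bigthm:fiber-sequence} shows $E^b(\Sigma,S^{n+q})$ is non-empty, and then the $(q-2)$-connectivity of the inclusion $E(\Sigma,S^{n+q}) \to E^b(\Sigma,S^{n+q})$ (with $q\ge 3$, hence this map is at least $1$-connected) forces $E(\Sigma,S^{n+q})$ to be non-empty and puts its path components in bijection with those of $E^b(\Sigma,S^{n+q})$. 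Thus the homotopy groups appearing in the statement are well defined and the comparison map is a based map on each component.

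Next I would invoke the comparison result: the inclusion $E(\Sigma,S^{n+q}) \to E^b(\Sigma,S^{n+q})$ is $(q-2)$-connected, from \cite[\S7.1]{GK-smooth}. By the standard convention, a $(q-2)$-connected map induces an isomorphism on $\pi_j$ for $j < q-2$, i.e.\ for $j \le q-3$, and an epimorphism on $\pi_{q-2}$, for every choice of basepoint. In particular, for $j \le q-3$ one obtains an isomorphism $\pi_j(E(\Sigma,S^{n+q})) \xrightarrow{\ \cong\ } \pi_j(E^b(\Sigma,S^{n+q}))$. Finally, since $q \ge 3$, $n \ge 2$, and $\ell_q(\eta)$ is non-empty, Theorem~\ref{bigthm:homotopy-grps} applies and gives $\pi_j(E^b(\Sigma,S^{n+q})) \cong \pi_{j+n}(\mathfrak F_q)$ whenever $j \le q-2$, hence a fortiori whenever $j \le q-3$. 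Composing the two isomorphisms yields $\pi_j(E(\Sigma,S^{n+q})) \cong \pi_{j+n}(\mathfrak F_q)$ for $j \le q-3$.

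There is no real obstacle here: the argument is a two-step concatenation of an input theorem and a cited connectivity bound. The only points demanding a modicum of care are (i) verifying that the advertised range $j \le q-3$ sits inside the range $j \le q-2$ where Theorem~\ref{bigthm:homotopy-grps} is available, which it does; and (ii) applying the connectivity statement with the correct off-by-one convention, so that a $(q-2)$-connected map is an isomorphism on $\pi_j$ precisely for $j \le q-3$ and only surjective in degree $q-2$ --- this is exactly why the corollary loses one degree relative to Theorem~\ref{bigthm:homotopy-grps}.
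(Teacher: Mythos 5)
Your proposal is correct and follows exactly the route the paper intends: the paper derives Corollary \ref{bigcor:homotopy-grps} directly from the $(q-2)$-connectivity of the inclusion $E(\Sigma,S^{n+q}) \to E^b(\Sigma,S^{n+q})$ (cited from \cite[\S7.1]{GK-smooth}) combined with Theorem \ref{bigthm:homotopy-grps}, which is precisely your two-step concatenation, including the off-by-one loss from degree $q-2$ to $q-3$. Your additional remarks on non-emptiness and basepoints are a harmless (and reasonable) elaboration of what the paper leaves implicit.
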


%(The second part of the follows from the first part together
% with the observation that $q\le n+1$ implies $k =2q-n-3$ and the fact that
 % $\Gr_{n+q+1}\!/\Gr_{q}$ is $(q-1)$-connected.)

%Finally, we remark that variants of the above results hold
% in homology.

%\begin{bigadd} \label{bigadd:homology-grps} The statement of Theorem \ref{bigthm:homotopy-grps} and
% Corollary \ref{bigcor:homotopy-grps} hold when homotopy is replaced by homology with coefficients in any abelian group.
%\end{bigadd}

\subsection{A computation of the rational homotopy} \label{subsec:computation} If we focus on the case when $\Sigma = S^n$ is the standard sphere,
then with respect to suitable assumptions on $n$ and $q$,
the fiber sequence \eqref{eqn:fiber-sequence} enables one to completely determine the rational homotopy groups of the block embedding space $E^b(S^n,S^{n+q})$.
The standard inclusion $S^n \subset S^{n+q}$ defines a basepoint for $E^b(S^n,S^{n+q})$.
For positive integers  $q$ and $n$, let 
\[
\scr J_{q,n}
\]
be the set of positive integers $j$ that satisfy any of the following constraints:
\begin{enumerate}[(i).]
\item $j = q$, or \\
\item $j = n+q$, or \\
\item $j \equiv 2 \!\! \mod 4$ and  $j  \ge 2q-n-1$, or \\
\item $j \equiv 3 \!\! \mod 4$ and $j  \ge 2q-1$.
\end{enumerate}

\begin{bigthm} \label{bigthm:computation} Assume that $q$ is even, $n\ge 2$ is odd, 
% $n \equiv  1 \!\!\mod 4$, 
and $q \ge 2n+2$. Then
\[
\pi_j(E^b(S^n,S^{n+q}))\otimes \Bbb Q \,\, \cong \,\, 
\begin{cases}
\Bbb Q, & j \in \scr J_{q,n};\\
0, & \text{\rm otherwise.}
\end{cases}
\]
\end{bigthm}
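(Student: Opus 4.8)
The plan is to run the rational homotopy of the fiber sequence \eqref{eqn:fiber-sequence} for $\Sigma = S^n$, using that $\eta$ is nullhomotopic in this case, so that $\ell_q(\eta)$ is simply the space of sections of the fibration $\mathfrak F_q \to \Sigma \to \Sigma$, i.e. $\map(S^n, \mathfrak F_q) \simeq \mathfrak F_q \times \Omega^n \mathfrak F_q$. Rationally this gives $\pi_j(\ell_q(\eta)) \otimes \Bbb Q \cong (\pi_j(\mathfrak F_q) \oplus \pi_{j+n}(\mathfrak F_q)) \otimes \Bbb Q$. The long exact sequence of \eqref{eqn:fiber-sequence} then reads
\[
\cdots \to \pi_j(\ell_q(\eta))\otimes\Bbb Q \to \pi_j(E^b)\otimes\Bbb Q \to \pi_j(\Gr_{n+q+1}/\Gr_q)\otimes\Bbb Q \to \cdots,
\]
so the problem reduces to (a) computing $\pi_*(\mathfrak F_q)\otimes\Bbb Q$, (b) computing $\pi_*(\Gr_{n+q+1}/\Gr_q)\otimes\Bbb Q$, and (c) identifying the connecting map.

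First I would compute $\pi_*(\mathfrak F_q)\otimes\Bbb Q$. Since $\mathfrak F_q$ is the homotopy fiber of $\Gr_q/\Or_q \to \Gr/\Or$, and $\Gr/\Or$ is rationally trivial (it is the fiber of $B\Or \to B\Gr$, and $B\Gr$ is rationally trivial while $B\Or$ has rational homotopy $\Bbb Q$ in degrees $4k$; actually one must be slightly careful — $\Gr/\Or \simeq \Top/\Or$-style spaces, but the relevant fact is that $B\Gr$ is rationally a point, so $\Gr/\Or \simeq_{\Bbb Q} \Or$ up to a shift, hence $\pi_j(\Gr/\Or)\otimes\Bbb Q = \Bbb Q$ for $j \equiv 3 \bmod 4$). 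Then $\pi_j(\mathfrak F_q)\otimes\Bbb Q$ fits in the long exact sequence with $\pi_*(\Gr_q/\Or_q)\otimes\Bbb Q$ and $\pi_*(\Gr/\Or)\otimes\Bbb Q$; the rational homotopy of $\Gr_q/\Or_q = G_q/O_q$ (fiber of $BO_q \to BG_q$) is classically known — $BG_q \simeq_{\Bbb Q} BO_q$ stably corrected by the rational homotopy of $G_q = \Omega^q_0 S^{q-1}$-type spaces; for $q$ even the rational homotopy of $S^{q-1}$ contributes a $\Bbb Q$ in degrees $q-1$ and $2q-3$ (via the Whitehead product), giving contributions to $\pi_*(G_q/O_q)\otimes\Bbb Q$ in degrees $q$ and $2q-2$, together with the unstable/stable discrepancy classes. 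Assembling these via the fibration $\mathfrak F_q \to G_q/O_q \to G/O$ and tracking which classes survive yields, under the hypotheses $q$ even, $q \ge 2n+2$, the rational homotopy of $\mathfrak F_q$ concentrated in: degree $q$ (the Euler class type obstruction), degrees $\equiv 2 \bmod 4$ above $2q-1$ (stable image-of-$J$-type classes), and degrees $\equiv 3 \bmod 4$ above $2q-1$; shifting by $n$ via the $\Omega^n$ gives conditions (i), (iii), (iv) in the list defining $\scr J_{q,n}$.

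Next I would compute $\pi_*(\Gr_{n+q+1}/\Gr_q)\otimes\Bbb Q$. Write this as the homotopy fiber of $B\Gr_q \to B\Gr_{n+q+1}$. Rationally, $B\Gr_k$ is equivalent to $BO_k$ up to the rational homotopy of $S^{k-1}$-type fibers, so for $q$ even and $n+q+1$ odd (which holds since $n$ is odd), one gets $\pi_j(B\Gr_q)\otimes\Bbb Q = \Bbb Q$ for $j = q$ and $j = 2q-1$, while $B\Gr_{n+q+1}$ with odd fiber dimension is rationally trivial in the relevant range except for a single class. Taking the fiber, $\pi_j(\Gr_{n+q+1}/\Gr_q)\otimes\Bbb Q = \Bbb Q$ for $j = q$ (Euler class) and $j = n+q$, which are exactly conditions (i) and (ii). The hard part — and the step I expect to be the main obstacle — is (c): showing that the connecting map $\pi_{j+1}(\Gr_{n+q+1}/\Gr_q)\otimes\Bbb Q \to \pi_j(\ell_q(\eta))\otimes\Bbb Q$ vanishes (or is handled) so that the long exact sequence splits into short exact sequences $0 \to \pi_j(\ell_q(\eta))\otimes\Bbb Q \to \pi_j(E^b)\otimes\Bbb Q \to \pi_j(\Gr_{n+q+1}/\Gr_q)\otimes\Bbb Q \to 0$, and moreover these are rationally split. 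The degree-$q$ class appears in both $\pi_*(\mathfrak F_q)$ (hence in $\ell_q(\eta)$ in degrees $q$ and $q-n$, but $q-n \le q-2$ is below relevance only if... actually $q - n$ is positive) and in $\pi_*(\Gr_{n+q+1}/\Gr_q)$ — I need to check whether the map $E^b \to \Gr_{n+q+1}/\Gr_q$ detects the degree-$q$ class once or twice, i.e. whether there is cancellation. I would resolve this by a direct geometric identification: the composite $\ell_q(\eta) \to E^b \to \Gr_{n+q+1}/\Gr_q$ and the Euler-class/obstruction-theoretic meaning of the degree-$q$ generators, using Corollary \ref{bigcor:cor1} that $E^b \to \Gr_{n+q+1}/\Gr_q$ is $(2q-n-3)$-connected to pin down the low-degree behavior, and the connectivity statement that $\ell_q(\eta)$ is $(2q-n-4)$-connected to control the overlap region. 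Once the connecting maps are shown to vanish rationally in every degree — which I expect to follow from the fact that $\Gr_{n+q+1}/\Gr_q$ is rationally formal/a product of rational spheres in this range and the fibration \eqref{eqn:fiber-sequence} admits a rational section coming from a stable splitting — the computation of $\pi_j(E^b(S^n,S^{n+q}))\otimes\Bbb Q$ as $\Bbb Q$ exactly on $\scr J_{q,n}$ follows by bookkeeping, being careful about the single potential coincidence at $j = q$ and at $j = n+q$ versus the shifted $\mathfrak F_q$-classes (the hypothesis $q \ge 2n+2$ is precisely what keeps these families from colliding or doubling up).
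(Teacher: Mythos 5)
Your overall strategy coincides with the paper's: for $\Sigma = S^n$ the normal invariant $\eta$ is nullhomotopic, so $\ell_q(\eta)\simeq \map(S^n,\mathfrak F_q)$ splits rationally as $\mathfrak F_q\times\Omega^n\mathfrak F_q$, and one feeds the rational homotopy of fiber and base into the long exact sequence of \eqref{eqn:fiber-sequence}. But your input at the decisive step is wrong. First, $\Gr\!/\Or$ is not rationally $\Or$ up to a shift: since $B\Gr$ is rationally trivial, $\Gr\!/\Or\simeq_{\mathbb{Q}} B\Or\simeq_{\mathbb{Q}}\prod_{k\ge1}K(\mathbb{Q},4k)$ (Pontryagin classes), i.e.\ concentrated in degrees $\equiv 0\bmod 4$, not $\equiv 3\bmod 4$. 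Second, for $q$ even the sphere $S^{q-1}$ is odd-dimensional, so it has no rational Whitehead-product class in degree $2q-3$; this is precisely what gives $B\SGr_q\simeq_{\mathbb{Q}}K(\mathbb{Q},q)$ and, via Milnor--Stasheff (Euler class plus Pontryagin classes for $B\SOr_q$), $\Gr_q\!/\Or_q\simeq_{\mathbb{Q}}\prod_{k=1}^{q/2-1}K(\mathbb{Q},4k)$ (Lemma \ref{lem:gqmodoq}). Consequently $\mathfrak F_q\simeq_{\mathbb{Q}}\prod_{k\ge q/2}K(\mathbb{Q},4k-1)$ (Corollary \ref{cor:F-rational}): its rational homotopy lives only in degrees $\equiv 3\bmod 4$ with $j\ge 2q-1$. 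Your claim that $\mathfrak F_q$ carries a degree-$q$ (Euler-type) class and classes $\equiv 2\bmod 4$ above $2q-1$ is false; the degree-$q$ and degree-$(n+q)$ classes occur only in the base $\Gr_{n+q+1}\!/\Gr_q$, and the $\equiv 2\bmod 4$ family only appears after applying $\Omega^n$.

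Because of this miscomputation you manufacture a spurious coincidence at $j=q$ and then propose to resolve it by a rational section or stable splitting of \eqref{eqn:fiber-sequence}, which you never construct. In the correct computation no such argument is needed: the hypothesis $q\ge 2n+2$ gives $2q-n-1>n+q$, so the degrees where $\pi_*(\ell_q(\eta))\otimes\mathbb{Q}$ is nonzero (all $\ge 2q-n-1$) are disjoint from $\{q,\,n+q\}$, the only degrees where the base is nonzero; hence the boundary homomorphisms vanish for degree reasons, each degree receives at most one one-dimensional contribution, and the union of the two lists is exactly $\mathscr{J}_{q,n}$. Correct the rational computations of $\Gr\!/\Or$, $\Gr_q\!/\Or_q$ and $\mathfrak F_q$ as above and the bookkeeping closes without any cancellation or splitting analysis.
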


\begin{rem} In fact, for any choice of $q\ge 3$ and $n \ge 2$, it is not hard to show that the vector space $\pi_j(E^b(S^n,S^{n+q}))\otimes \Bbb Q$ has dimension at most two for all $j\ge 0$.
\end{rem}

\begin{out}  Section \ref{sec:prelim} consists of  preliminary material on acyclic maps, Poincar\'e spaces,
Poincar\'e embeddings, classifying spaces, and lifting problems. 
In section \ref{sec:induced}, we consider an acyclic map of Poincar\'e spaces $X\to Y$
and relate the Poincar\'e embeddings of $Y$ in a sphere with those of $X$. Section \ref{sec:a-d} contains the proofs of Theorems
\ref{bigthm:pd-hom-connected}-\ref{bigthm:PL}. The material of section \ref{sec:embedding-spaces} topologizes the space of Poincar\'e embeddings and
states a version of the  Browder-Casson-Sullivan-Wall theorem. Section \ref{sec:homology-sphere-embedding} contains
the proof of Theorem \ref{bigthm:fiber-sequence}  and Theorem \ref{bigthm:homotopy-grps}. Section \ref{sec:computation}
is about the proof of Theorem \ref{bigthm:computation}. In appendix \ref{sec:app}, we give two other characterizations of acyclic maps
which may be of general interest, although they are not used in the paper.
\end{out}

\section{Preliminaries}\label{sec:prelim} 

We work within the model category $\Top$ of compactly generated spaces \cite[\S2.4]{Hovey_model}.
The fibrations/weak equivalences are the Serre fibrations/weak homotopy equivalences. Every space is fibrant.
We also typically assume that spaces are cofibrant. To arrange this, we apply cofibrant approximation wherever necessary.

\subsection{Acyclic maps}  We  give some examples and recall some properties of acyclic maps.

\begin{ex} Let $f\: X \to Y$ be an acyclic map with $Y$ is $1$-connected. Then by the universal coefficient theorem, 
$f\: X\to Y$ is acyclic if and only if $f_\ast\: H_\ast(X;\Bbb Z) \to H_\ast(Y;\Bbb Z)$ is an isomorphism.
In particular, If $Y = \ast$ is a point, then $f\: X\to \ast$ is acyclic if and only if $X$ is an acyclic space.
\end{ex}

\begin{ex}[The plus construction] Suppose that $X$ is a connected space. Then Quillen's plus construction
$X\to X^+$ is an acyclic map.

As a special case, note that for $X$ a homology $n$-sphere, we have $X^+ \simeq S^n$.
\end{ex}

\begin{lem}[{\cite[prop.~2.2] {Hausmann-Husemoller}}]\label{lem:hereditary} Assume
\[
\xymatrix{
X \ar[r]^f \ar[d] &Y \ar[d]\\
A \ar[r]_g & B
}\]
is a commutative and homotopy Cartesian and $g\: A\to B$ is an acyclic map. Then $f\: X\to Y$ is an acyclic map.
\end{lem}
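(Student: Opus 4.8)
The plan is to pass to homotopy fibers and use the standard characterization of acyclicity in fiber-theoretic terms: a map $h\: P\to Q$ is acyclic if and only if, for every $q\in Q$, the homotopy fiber $\mathrm{hofib}_q(h)$ is an \emph{acyclic space}, i.e.\ has the integral homology of a point. Granting this criterion, the lemma is essentially formal. Name the vertical maps $u\: X\to A$ and $v\: Y\to B$, so that $gu = vf$. Since the square is homotopy Cartesian we have $X\simeq A\times^h_B Y$ over $Y$, and hence for any $y\in Y$ the pasting law for homotopy pullbacks gives
\[
\mathrm{hofib}_y(f)\ =\ X\times^h_Y\{y\}\ \simeq\ (A\times^h_B Y)\times^h_Y\{y\}\ \simeq\ A\times^h_B\{v(y)\}\ =\ \mathrm{hofib}_{v(y)}(g).
\]
Because $g$ is acyclic, the right-hand side is an acyclic space; therefore $\mathrm{hofib}_y(f)$ is acyclic for every $y\in Y$, and so $f$ is acyclic.

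The only point needing real justification is the implication used above, namely that if every homotopy fiber of a map is an acyclic space then the map is acyclic. For this I would replace $f$ by a fibration, fix a path component of $Y$, and run the Serre spectral sequence of $\mathrm{hofib}(f)\to X\to Y$ with coefficients in an arbitrary local system $\scr E$ on $Y$:
\[
E^2_{p,q}\ =\ H_p\big(Y;\,\mathscr H_q(\mathrm{hofib}(f))\otimes\scr E\big)\ \Longrightarrow\ H_{p+q}(X;f^\ast\!\scr E),
\]
where $\mathscr H_q(\mathrm{hofib}(f))$ is the local system on $Y$ determined by the $q$-th homology of the fiber together with its monodromy. Acyclicity of the fiber (in particular, its non-emptiness and connectivity) forces $\mathscr H_q=0$ for $q>0$ and $\mathscr H_0\cong\underline{\Bbb Z}$ with trivial monodromy, so the spectral sequence collapses onto the edge and produces the isomorphism $f_\ast\: H_\ast(X;f^\ast\!\scr E)\xrightarrow{\ \cong\ }H_\ast(Y;\scr E)$. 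Doing this on each component of $Y$ gives the claim; the converse implication, which is not needed here, follows from base-changing along the maps $\{y\}\to Y$.

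I expect the main obstacle to be purely organizational rather than substantive: checking that the monodromy of $\pi_1(Y)$ on $H_0$ of the homotopy fiber is indeed trivial (automatic, since acyclic spaces are non-empty and connected) and handling a possibly disconnected $Y$ one component at a time. If one prefers to avoid invoking the homotopy-fiber criterion as a black box, an equivalent route is to compare directly the Serre spectral sequences of $\mathrm{hofib}(g)\to A\to B$ and $\mathrm{hofib}(f)\to X\to Y$ induced by the square: the homotopy-Cartesian hypothesis yields a weak equivalence on vertical homotopy fibers, hence an isomorphism on $E^2$-pages for matching local systems, and the comparison theorem then transfers the isomorphism $g_\ast$ to $f_\ast$. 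Either way the proof reduces to the fiber identification of the first paragraph together with a routine spectral sequence argument.
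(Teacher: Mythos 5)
Your argument is correct. Note that the paper itself gives no proof of this lemma: it is quoted verbatim from Hausmann--Husemoller (their prop.~2.2), and the fiberwise criterion you invoke is precisely the next lemma in the paper (a fibration is acyclic if and only if its fibers are acyclic spaces, also cited from Hausmann--Husemoller). So your proof is essentially the standard one underlying the citation: convert to a fibration, observe that in a homotopy Cartesian square the horizontal maps have weakly equivalent homotopy fibers (your pasting-law computation is right), and conclude from acyclicity of the fibers of $g$. Your Serre spectral sequence justification of the fiber criterion is also sound; the only cosmetic point is that the $E^2$-term should properly be $H_p\bigl(Y;\mathscr H_q(\mathrm{hofib}(f);(f^\ast\scr E)|_F)\bigr)$ rather than $\mathscr H_q(\mathrm{hofib}(f))\otimes\scr E$ in general, but since the fiber is acyclic (so $H_0\cong\Bbb Z$ is free and all reduced homology and Tor terms vanish), the two agree and the collapse argument goes through exactly as you say.
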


\begin{lem}[{\cite[defn./prop.~1.2]{Hausmann-Husemoller}}] A fibration $p\: E \to B$ is an acyclic map if an only if every fiber $p^{-1}(b)$ is an acyclic space.
\end{lem}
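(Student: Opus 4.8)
The plan is to establish both implications with the Leray--Serre spectral sequence of $p$ (with local coefficients), run in opposite directions. For the ``if'' direction, assume every fiber of $p$ is acyclic. One may assume $B$ connected, since $p$ is acyclic precisely when its restriction over each path component of $B$ is, and the hypothesis restricts to components. Fix $b_0\in B$, set $F=p^{-1}(b_0)$, and let $\scr E$ be an arbitrary local system on $B$; I would form the homological Serre spectral sequence of $p$ with coefficients in $p^\ast\scr E$,
\[
E^2_{s,t}=H_s(B;\underline H_t(F;\scr E))\ \Longrightarrow\ H_{s+t}(E;p^\ast\scr E),
\]
where $\underline H_t(F;\scr E)$ is the local system on $B$ with stalk $H_t\bigl(p^{-1}(b);\,p^\ast\scr E|_{p^{-1}(b)}\bigr)$. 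The point is that $p$ collapses each fiber to a point, so $p^\ast\scr E$ restricts on $p^{-1}(b)$ to the \emph{constant} system of value $\scr E_b$; since $p^{-1}(b)$ is acyclic -- hence path-connected, with $\tilde H_\ast(\,\cdot\,;\Bbb Z)=0$ and therefore, by the universal coefficient theorem, $\tilde H_\ast(\,\cdot\,;A)=0$ for every abelian group $A$ -- one gets $\underline H_t(F;\scr E)=0$ for $t>0$ and $\underline H_0(F;\scr E)\cong\scr E$. Hence the $E^2$-page is concentrated in the row $t=0$, the spectral sequence collapses, and the induced isomorphism $H_\ast(E;p^\ast\scr E)\xrightarrow{\ \sim\ }H_\ast(B;\scr E)$ is the base edge homomorphism, i.e.\ $p_\ast$. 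As $\scr E$ was arbitrary, $p$ is acyclic.

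For the ``only if'' direction, assume $p$ is acyclic; again reduce to $B$ connected, so that all fibers of $p$ are homotopy equivalent and it suffices to prove that one of them is acyclic. The first step is to straighten out the fundamental group of the base: let $\widetilde B\to B$ be the universal cover and put $\widetilde E=E\times_B\widetilde B$. The evident square with top row $\widetilde E\to\widetilde B$, bottom row $E\xrightarrow{p}B$, and verticals the two covering projections is the pullback of the fibration $p$, hence homotopy Cartesian, and its bottom map is acyclic; so Lemma~\ref{lem:hereditary} shows that $q\:\widetilde E\to\widetilde B$ is acyclic. Now $q$ is again a fibration, with fiber a fiber $F$ of $p$, but its base is simply connected, so ``$q$ acyclic'' just says that $q_\ast\:H_\ast(\widetilde E;\Bbb Z)\to H_\ast(\widetilde B;\Bbb Z)$ is an isomorphism; in particular $q$ is a $\pi_0$-bijection, so $\widetilde E$ is connected, whereupon the homotopy exact sequence of $q$ forces $F$ connected. (If one does not wish to assume $B$ admits a universal cover, the same reduction is done directly with the twisted spectral sequence and the group-ring local system $\Bbb Z[\pi_1 B]$.)

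It then remains to run the Serre spectral sequence of $q$, which has untwisted coefficients since $\widetilde B$ is simply connected:
\[
E^2_{s,t}=H_s(\widetilde B;H_t(F;\Bbb Z))\ \Longrightarrow\ H_{s+t}(\widetilde E;\Bbb Z)\cong H_{s+t}(\widetilde B;\Bbb Z),
\]
the abutment being identified via $q_\ast$. Because $q_\ast$ is an isomorphism, the filtration on the abutment is trivial -- so $E^\infty_{s,t}=0$ for $t>0$ -- and, since no differential can enter the bottom row, the equality $E^\infty_{n,0}=E^2_{n,0}$ forces all differentials out of the bottom row to vanish. An induction on $t\ge 1$ now gives $E^2_{0,t}=H_t(F;\Bbb Z)=0$: a differential reaching $E^r_{0,t}$ originates either from some $E^r_{r,\,t-r+1}$ with $0<t-r+1<t$ (zero by the inductive hypothesis) or from the bottom row (zero), so $E^\infty_{0,t}=E^2_{0,t}$, which vanishes. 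Hence $\tilde H_\ast(F;\Bbb Z)=0$, i.e.\ $F$ is acyclic. I expect the only genuinely non-formal step to be this last no-cancellation induction -- it is what rules out the fiber carrying homology invisible in the total space through cancellation of spectral sequence terms -- whereas the passage to a simply connected base via Lemma~\ref{lem:hereditary} is exactly what keeps the coefficient bookkeeping trivial, and the remaining ingredients (restricting $p^\ast\scr E$ to a fiber, the collapse in the ``if'' direction, the $\pi_0/\pi_1$ computations) are routine.
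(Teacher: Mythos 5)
Your proof is correct, and there is nothing in the paper to compare it against in detail: the paper does not prove this lemma at all, but simply quotes it from Hausmann--Husemoller (d\'efn./prop.~1.2), so what you have written is a self-contained substitute for that citation, and it is essentially the classical argument. The ``if'' direction is sound: an acyclic fiber is path-connected with vanishing reduced homology for any constant coefficients (universal coefficients), so the coefficient system $\underline H_t(F;\mathscr E)$ vanishes for $t>0$ and is $\mathscr E$ for $t=0$ (a self-map of a connected space acts trivially on $H_0$, so the monodromies agree), the spectral sequence for $p^\ast\mathscr E$ collapses to the bottom row, and the edge map is $p_\ast$ by naturality against $\mathrm{id}_B$. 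The ``only if'' direction correctly invokes Lemma~\ref{lem:hereditary} to base-change along $\widetilde B\to B$, where acyclicity of $q$ yields an integral homology isomorphism, and your no-cancellation induction is exactly right: injectivity of $q_\ast$ forces $E^\infty_{s,t}=0$ for $t>0$, surjectivity forces every differential leaving the bottom row to vanish (nothing enters it in homology), and then the column $s=0$ dies by induction on $t$, giving $\tilde H_\ast(F;\mathbb Z)=0$; connectivity of $F$ also drops out of $E^\infty_{0,0}$ if you prefer to avoid the homotopy exact sequence. Two cosmetic caveats: fibers of a Serre fibration over a path-connected base are in general only \emph{weakly} homotopy equivalent, which suffices for acyclicity; and the existence of the universal cover is harmless under the paper's standing cofibrancy conventions (or can be bypassed by your $\mathbb Z[\pi_1 B]$-coefficient variant, as you note).
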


\begin{rem}
For several  other characterizations of acyclic maps, we refer the reader to  \cite{Hausmann-Husemoller}, \cite{Raptis}, and 
Appendix \ref{sec:app}.
\end{rem}

 % \begin{lem}[Raptis {\cite[thm.~2.1(a)]{Raptis}}] A map $f\: X\to Y$ is acyclic if and only if the map $S_YX\to Y$ is a weak homotopy equivalence, i.e., 
% $S_YX \in \Top_{/Y}$ is weakly equivalent to the terminal object.
% \end{lem}

\subsection{Poincar\'e duality spaces} Recall that an object $X \in \Top$ is {\it Poincar\'e duality space} of (formal) dimension $d$ if
there exists a pair
\[
(\cal L,[X])
\]
in which $\cal L$ is a local coefficient system of abelian groups that is locally isomorphic to 
$\Bbb Z$ (i.e., the {\it orientation sheaf}) and 
\[
[X] \in H_d(X;\cal L)
\]
is a {\it fundamental class} such that for all local coefficient systems $\cal B$, the cap product homomorphism
\[
\cap [X] \: H^\ast(X;\cal B) \to H_{d-\ast}(X;\cal L\otimes\cal B)
\]
is an isomorphism all degrees (cf.~\cite{Wall_Poincare}, \cite{Klein_Poincare}).  
The Poincar\'e
spaces considered in this paper are assumed to be finitely dominated and cofibrant.
Note that if the pair $(\cal L, [X])$ exists it is determined up to unique isomorphism by the above property. 
Also note that closed manifolds are homotopy finite Poincar\'e duality spaces.

\begin{defn}
A map $f\: X\to Y$ of Poincar\'e spaces of the same dimension is {\it orientable} if it preserves orientation sheaves, i.e., if $L$ is an orientation
sheaf for $Y$, then  $f^\ast L$ is an orientation sheaf for $X$.
\end{defn}

\begin{lem}\label{lem:target-pd} Assume $X$ is a Poincar\'e space of dimension $d$. Let $f\: X \to Y$ be an acyclic map.
Then $Y$ is also a Poincar\'e space of dimension $d$.
Furthermore, $f$ is orientable.
\end{lem}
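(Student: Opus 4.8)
The plan is to transport the Poincar\'e duality structure of $X$ across $f$, using two facts about acyclic maps: (a) by definition, $f_\ast\: H_\ast(X;f^\ast\cal B)\to H_\ast(Y;\cal B)$ is an isomorphism for every local system $\cal B$ on $Y$; and (b) dually, $f^\ast\: H^\ast(Y;\cal B)\to H^\ast(X;f^\ast\cal B)$ is an isomorphism for every such $\cal B$. To get (b) I would pass to the universal cover $\widetilde Y\to Y$ and form the pullback $\widetilde X := X\times_Y\widetilde Y$; by Lemma \ref{lem:hereditary} the map $\widetilde X\to\widetilde Y$ is acyclic, and since $\widetilde Y$ is simply connected this forces $\widetilde X\to\widetilde Y$ to be an integral homology isomorphism. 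Hence $C_\ast(\widetilde X)\to C_\ast(\widetilde Y)$ is a quasi-isomorphism of complexes of free $\Bbb Z[\pi_1 Y]$-modules, and dualizing over $\Bbb Z[\pi_1 Y]$ yields (b). (One also records that $Y$ is connected, which is immediate from $f_\ast$ being an isomorphism on $H_0$.)

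Next I would descend the orientation sheaf. Write $\cal L$ for the orientation sheaf of $X$, classified by a homomorphism $w\: \pi_1 X\to\{\pm1\}$. The homotopy fibre $F$ of $f$ is an acyclic space (make $f$ a fibration and apply the fibre characterization of acyclic maps), so it is connected with $H_1(F;\Bbb Z)=0$; therefore $\pi_1 F$ is perfect, and hence so is its image $N=\ker(\pi_1 X\to\pi_1 Y)$, while $\pi_1 X\to\pi_1 Y$ is onto. Since $\{\pm1\}$ is abelian, $w$ kills $[\pi_1 X,\pi_1 X]\supseteq N$, so it factors through a homomorphism $w'\: \pi_1 Y\to\{\pm1\}$; let $\cal L'$ be the corresponding orientation sheaf on $Y$, so that $f^\ast\cal L'=\cal L$ on the nose. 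This already proves the last sentence of the lemma: $f$ is orientable. I would then set $[Y]:=f_\ast[X]\in H_d(Y;\cal L')$, using the isomorphism $H_d(X;\cal L)=H_d(X;f^\ast\cal L')\xrightarrow{\,f_\ast\,}H_d(Y;\cal L')$ from (a).

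It remains to check that $\cap[Y]\: H^p(Y;\cal B)\to H_{d-p}(Y;\cal L'\otimes\cal B)$ is an isomorphism for all $p$ and all $\cal B$. Naturality of the cap product gives $f_\ast(f^\ast b\cap[X])=b\cap[Y]$ for $b\in H^p(Y;\cal B)$, which, using $\cal L\otimes f^\ast\cal B\cong f^\ast(\cal L'\otimes\cal B)$, says precisely that
\[
\bigl(-\cap[Y]\bigr)\;=\;f_\ast\circ\bigl(-\cap[X]\bigr)\circ f^\ast .
\]
The right-hand side is a composite of three isomorphisms: $f^\ast$ by (b); $-\cap[X]$ because $X$ is a Poincar\'e space of dimension $d$ (with coefficients $f^\ast\cal B$); and $f_\ast$ by (a) (with coefficients $\cal L'\otimes\cal B$). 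Hence $\cap[Y]$ is an isomorphism, so $(\cal L',[Y])$ exhibits $Y$ as a Poincar\'e duality space of dimension $d$. Finally $Y$ may be taken cofibrant by cofibrant approximation, and it is finitely dominated: the equivalence $C_\ast(\widetilde X)\simeq C_\ast(\widetilde Y)$ identifies $C_\ast(\widetilde Y)$ with the base change along $\Bbb Z[\pi_1 X]\to\Bbb Z[\pi_1 Y]$ of a finite complex of finitely generated projective $\Bbb Z[\pi_1 X]$-modules, and $\pi_1 Y$ is finitely presented. I expect this last clause — the behaviour of $\pi_1$ and of the Wall finiteness obstruction under an acyclic map — to be the only step demanding genuine care; the duality isomorphism and the orientability of $f$ are formal consequences of (a), (b), the perfectness of $N$, and the naturality of the cap product.
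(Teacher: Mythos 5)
Your argument is correct and is essentially the paper's own proof written out in full detail. The paper descends the orientation sheaf by observing that $f^\ast\colon H^1(Y;\mathbb{Z}/2)\to H^1(X;\mathbb{Z}/2)$ is an isomorphism; your factorization of $w\colon\pi_1X\to\{\pm1\}$ through $\pi_1Y$ via the perfectness of $N=\ker(\pi_1X\to\pi_1Y)$ is the same fact established at the group level, and your identity $-\cap[Y]=f_\ast\circ(-\cap[X])\circ f^\ast$, together with the cohomology isomorphism (b) (which you derive by passing to $\widetilde Y$ and dualizing a chain homotopy equivalence of free $\mathbb{Z}[\pi_1Y]$-complexes, and which is also available directly from Hausmann--Husemoller), is exactly the content behind the paper's one-line assertion that ``duality for $Y$ is a consequence of duality for $X$.'' The only point to flag is your closing finiteness clause: you assert that $\pi_1Y$ is finitely presented, but $\pi_1Y$ is the quotient of $\pi_1X$ by the perfect normal subgroup $N$, and quotients of finitely presented groups by perfect normal subgroups need not be finitely presented (equivalently, $N$ need not be finitely generated as a normal subgroup), so as written that step is an assertion rather than a proof; since the paper's own proof is silent on finite domination altogether, this does not affect the comparison of approaches, but you are right to single it out as the one step demanding genuine care.
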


\begin{proof} Up to isomorphism, the choice of orientation sheaf for $X$ corresponds to an element of $H^1(X;\Bbb Z/2)$. 
As $f^*\: H^1(Y;\Bbb Z/2) \to H^1(Y;\Bbb Z/2)$ is an isomorphism, 
there is a rank one local system $L$ on $Y$ such that $f^\ast L$ is an orientation sheaf for $X$.
Then  we may equip $Y$ with orientation sheaf $L$ and fundamental
class $[Y]$ corresponding to $[X] \in H_d(X;f^\ast L) \cong H_d(Y;L)$. Duality for $Y$ is then a consequence
of duality for $X$.
\end{proof}

\begin{ex} The converse to Lemma \ref{lem:target-pd} is not true in general.
By \cite[prop.~4.7]{Klein-Qin-Su}, if $X$ is the punctured Poincar\'e homology 3-sphere, then
the map $X\to \ast$ is acyclic, but $X$ is not a Poincar\'e duality space in the sense of Wall.
However, $X$ is a Poincar\'e duality space (of dimension zero)  in the weaker sense of Browder \cite{Browder-duality}
since duality holds with $\Bbb Z$-coefficients.
\end{ex}

It is worth singling out a special case of Lemma \ref{lem:target-pd}.

\begin{cor} \label{cor:target-pd} Assume $X$ is a Poincar\'e space of dimension $d$. Let $f\: X \to Y$ be an acyclic map
such that the orientation sheaf for $X$ is the constant sheaf $\Bbb Z$.
Then $Y$ is also a Poincar\'e space of dimension $d$ with orientation sheaf $\Bbb Z$.
Furthermore, $f$ is orientable.
\end{cor}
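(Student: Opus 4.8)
The plan is to derive this as an immediate consequence of Lemma~\ref{lem:target-pd}, the only additional task being to pin down the orientation sheaf of $Y$. Recall that isomorphism classes of rank one local systems of free abelian groups on a connected space $Z$ are classified by $H^1(Z;\Bbb Z/2)\cong \mathrm{Hom}(\pi_1(Z),\Bbb Z/2)$, with the constant sheaf $\Bbb Z$ corresponding to the zero class, and that this classification is natural with respect to pullback.

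First I would apply Lemma~\ref{lem:target-pd}: since $f\: X\to Y$ is acyclic and $X$ is a $d$-dimensional Poincar\'e space, $Y$ is a $d$-dimensional Poincar\'e space carrying an orientation sheaf $L$ such that $f^\ast L$ is an orientation sheaf for $X$. By hypothesis the constant sheaf $\Bbb Z$ is an orientation sheaf for $X$; as orientation sheaves are unique up to unique isomorphism, $f^\ast L\cong\Bbb Z$. Passing to $H^1(-;\Bbb Z/2)$, the class of $L$ pulls back along $f$ to the class of $f^\ast L\cong \Bbb Z$, which is $0$. But $f$ is acyclic, so $f^\ast\: H^1(Y;\Bbb Z/2)\to H^1(X;\Bbb Z/2)$ is an isomorphism, in particular injective; hence the class of $L$ is already $0$, i.e.\ $L\cong\Bbb Z$. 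One then transports $[X]\in H_d(X;f^\ast L)\cong H_d(Y;L)$ to a fundamental class $[Y]\in H_d(Y;\Bbb Z)$ exactly as in the proof of Lemma~\ref{lem:target-pd}, exhibiting $Y$ as a Poincar\'e space of dimension $d$ with orientation sheaf $\Bbb Z$. Orientability of $f$ is then automatic, since $f^\ast\Bbb Z = \Bbb Z$ is an orientation sheaf for $X$ by hypothesis (and this assertion is in any case already contained in Lemma~\ref{lem:target-pd}).

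I do not expect any genuine obstacle here; the argument is essentially a bookkeeping refinement of Lemma~\ref{lem:target-pd}. The one point meriting a moment's attention is the naturality under pullback of the $H^1(-;\Bbb Z/2)$ classification of rank one local systems, which is standard.
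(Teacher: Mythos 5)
Your argument is correct and follows essentially the same route as the paper, which offers no separate proof and simply records the corollary as a special case of Lemma~\ref{lem:target-pd}: the $H^1(-;\Bbb Z/2)$ classification of rank one local systems together with the isomorphism $f^\ast\colon H^1(Y;\Bbb Z/2)\to H^1(X;\Bbb Z/2)$ forces the sheaf $L$ produced by the lemma to be constant. Your write-up just makes explicit the bookkeeping the paper leaves implicit.
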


\subsection{The Spivak normal fibration} Suppose that $X$ is a space and 
$\xi\: E\to X$  is a $(k-1)$-spherical fibration. 
Let $D(\xi)$ be the mapping cylinder of $\xi$,  and $S(\xi) = E$. The {\it Thom space} is 
the quotient space
\[
X^\xi = D(\xi)/S(\xi)\, ,
\]
i.e, the mapping cone of $\xi$.

The obstruction to orienting $\xi$, i,.e., the first Stiefel-Whitney class, defines a rank one local system $L$ on $X$, and one  has a Thom class
$U \in H^k(D(\xi),S(\xi);L)$ inducing the Thom isomorphism
\[
\cap U\: \tilde H_{\ast+k}(X^\xi;\Bbb Z) @> \cong >> H_\ast(X;L) 
\]
(where we have conveniently identified the homology of $D(\xi)$ with the homology of $X$).
 A {\it Spivak normal fibration} for $X$ is a pair
\[
(\xi,\alpha)
\]
in which $\xi\: E \to X$ is a $(k-1)$-spherical fibration and  $\alpha\: S^{d+k}\to X^\xi$ is a map such that if we set
\[
[X] := U\cap \alpha_{\ast} [S^{d+k}] \in H_d(X;L)\, ,
\]
then the data $(L,[X])$ equip $X$ with the structure of a Poincar\'e duality space of dimension $d$. 

Conversely, 
if $X$ admits the structure of a Poincar\'e duality space with data $(L,[X])$, then a Spivak fibration for $X$ exists
inducing $(L,[X])$ provided that
$k$ is sufficiently large. As $k$ tends to infinity, a Spivak fibration is unique up to contractible choice \cite{Spivak}, \cite{klein_dualizing}.
In what follows, when referring to {\it the} Spivak fibration, we implicitly assume that 
$k$ is large, i.e., the spherical fibration is stable.

\subsection{Poincar\'e embeddings in a sphere}
Let $X$ be Poincar\'e duality space of dimension $d$. Let $n \ge d+3$ be an integer. 
A {\it $k$-spherical fibration up to homotopy} over $X$ is a map $E\to X$ whose homotopy
fibers are homotopy equivalent to $S^k$.

\begin{defn}\label{defn:PD-embedding}  A {\it Poincar\'e embedding} of $X$ in $S^n$ consists of a commutative 
homotopy Cocartesian square of spaces
\begin{equation} \label{eqn:PD-embedding}
\xymatrix{
E\ar[r] \ar[d]_{\xi} & C \ar[d] \\
X \ar[r] & S^n
}
\end{equation}
such that 
\begin{enumerate}[(i).]
\item The map $\xi\:E\to X$ is an $(n-d-1)$-spherical fibration, and
\item The collapse homomorphism $H_n(S^n) \to H_{n}(X^\xi)$ is an isomorphism, where $X^\xi$ is the mapping cone (Thom space) of $\xi$.
\end{enumerate}
Here, the collapse homomorphism is defined by observing that there is a weak equivalence of the mapping cones of the
vertical maps in the square. The map $E\to X$ is {\it normal data} and $C$ is the {\it complement}. 
Note that  $C$ is Spanier-Whitehead $(n-1)$-dual to $X$.
\end{defn}

\begin{rem} The spherical fibration $\xi\: E\to X$ together with the collapse
\[
S^n \to \text{cone}(C \to S^n) \simeq \text{cone}(\xi) = X^\xi
\]
is a Spivak normal fibration for $X$.
\end{rem}

\begin{rem} Wall's definition Poincar\'e embedding \cite[p.~119]{Wall_book} is incomplete as it does not require condition (ii) 
above. To see this, consider the commutative homotopy coCartesian square
\[
\xymatrix{
S^{n-1} \ar[r] \ar[d] & S^{n-1} \vee S^n \ar[d] \\
\ast \ar[r] & S^n
}
\]
in which the top horizontal map is the inclusion of the left wedge summand, and the right vertical map
pinches the summand $S^{n-1}$ to a point.  
Note that
condition (ii) of Definition  \ref{defn:PD-embedding} is violated, since the collapse homomorphism in this case is trivial.

However, it is not reasonable to declare this to be a Poincar\'e embedding of a point in $S^n$, since then the complement
in this case, i.e.,
$S^{n-1} \vee S^n$, is not Spanier-Whitehead dual to a point.

If we correct for this discrepancy by appending our condition (ii) to Wall's definition, there is a yet another distinction between the definitions. 
Wall only defines {\it simple} Poincar\'e embeddings of simple Poincar\'e complexes. However,  the additivity formula for Whitehead torsion
implies that when $d\le n+3$, one may avoid simple homotopy theory:
If $X$ is simple Poincar\'e complex of dimension $d$, then $X$ Poincar\'e embeds in $S^n$ in Wall's sense if and only if $X$ Poincar\'e embeds in $S^n$ our sense.
We leave this as an exercise for the reader.
\end{rem}

We consider next triads of the form $Z_\bullet := (Z,Z_0,Z_1)$ having the property that each map
$Z_i \to Z$ is a weak homotopy equivalence; the spaces $Z_i$ are the faces of the triad.
An example of such a triad is $X\times \Delta^1_{{}^\bullet} := (X\times \Delta^1, X\times \{0\}, X\times \{1\})$.

\begin{defn} 
A
{\it concordance} is
a commutative homotopy coCartesian square of triads
\[
\xymatrix{
E_\bullet \ar[r] \ar[d] & C_\bullet \ar[d]\\
X\times  \Delta^1_{{}^\bullet}  \ar[r] & S^n \times \Delta^1_{{}^\bullet}
}
\]
such that the restriction to each face is a Poincar\'e embedding of $X$ in $S^n$.
One says in this case that the pair of Poincar\'e embeddings are {\it concordant}.
Concordance defines an equivalence relation.
\end{defn}

\begin{rem} In Definition \ref{defn:PD-embedding},  we only required
the normal data to be a spherical fibration {\it up to homotopy}. We leave it as an exercise to the reader to show that
a Poincar\'e embedding of $X$ in $S^n$ is (functorially) concordant to one in which
the normal data is an actual spherical fibration.
\end{rem}

\subsection{Lifting problems}
Suppose one is presented with a lifting problem of the form
\begin{equation} \label{eqn:lifting-solutions}
\xymatrix{
Z \ar@/^/[drr]^f \ar@{..>}[dr]\ar@/_/[ddr]_g \\ 
& A' \ar[r]^{u'} \ar[d]_(.4){v} & B' \ar[d]^(.4){v'}\\
& A \ar[r]_{u} & B
}
\end{equation}
in which in which each of the maps appearing in the displayed square is a fibration. 
Without loss in generality, we also assume that the universal map
\[
w\: A' \to A \times_B B'
\]
is a fibration. Note that the pair $(f,g)$ defines a map $h\: Z \to A \times_B B'$.
Hence, we have a lifting problem
\begin{equation} \label{eqn:lifting-solutions2}
 \xymatrix{
& A' \ar[d]^w \\
Z \ar@{..>}[ur] \ar[r]_(.3)h & A \times_B B',
}
\end{equation}

\begin{defn} Given a lifting problem
\[
 \xymatrix{
& E \ar[d]^p \\
Z \ar@{..>}[ur] \ar[r]_f & B,
}
\]
we let  $\ell(f/p)$ denote the space of all lifts, 
i.e., the space of maps $Z \to E$ such that the composition
\[
Z\to E @> w>>  B
\]
is $f$. 
\end{defn}

Then with respect to diagrams \eqref{eqn:lifting-solutions} and \eqref{eqn:lifting-solutions2} we have the following.

\begin{lem} \label{lem:two-descriptions-of-lifts} The space of lifts $\ell(h/w)$ of diagram \eqref{eqn:lifting-solutions2} coincides with the fiber of fibration
\[
\ell(g /v) \to \ell((u\circ g)/ v')\, .
\]
at the basepoint  $f \in \ell(u\circ g \text{ {\rm along} } v')$. 

It is also the fiber of the fibration
\[
\ell(f/u') \to \ell((v'\circ f )/u)\, .
\]
at the basepoint $g\in  \ell((v'\circ f )/u)$.
\end{lem}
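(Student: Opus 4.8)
The plan is to transport the entire statement to mapping spaces, where it reduces to a routine bookkeeping of strict fibers in a square of fibrations. First I would apply the functor $\map(Z,-)$ to the square in \eqref{eqn:lifting-solutions}. Since $Z$ is cofibrant, $(-)\times Z$ is a left Quillen functor (the pushout--product axiom), so its right adjoint $\map(Z,-)$ carries fibrations to fibrations (every space being fibrant); it also preserves limits. One thus obtains a commutative square with vertices $\map(Z,A')$, $\map(Z,B')$, $\map(Z,A)$, $\map(Z,B)$ in which all four structure maps $u'_\ast, v_\ast, u_\ast, v'_\ast$ are fibrations, the comparison map $w_\ast\colon \map(Z,A') \to \map(Z,A)\times_{\map(Z,B)}\map(Z,B')$ is a fibration, and — again because $\map(Z,-)$ preserves limits — its codomain is canonically $\map(Z, A\times_B B')$. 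Recalling that $\ell(f/p)$ is by definition the strict fiber, over $f$, of the map of mapping spaces induced by $p$, we conclude that $\ell(h/w)$, $\ell(g/v)$, and $\ell((u\circ g)/v')$ are the fibers of $w_\ast$, $v_\ast$, and $v'_\ast$ over the points $h=(g,f)$, $g$, and $u\circ g = v'\circ f$ respectively; note that $u\circ g = v'\circ f$ is exactly the commutativity built into \eqref{eqn:lifting-solutions}, so $f$ does lie in $\ell((u\circ g)/v')$.

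Next I would identify the map appearing in the first assertion. Composing a lift $\tilde g\in \ell(g/v)$ with $u'$ produces a map $Z\to B'$ with $v'\circ(u'\tilde g) = u\circ(v\tilde g) = u\circ g$, so $u'_\ast$ carries $\ell(g/v)$ into $\ell((u\circ g)/v')$; this is the map $\ell(g/v) \to \ell((u\circ g)/v')$ of the statement. The square of mapping spaces then exhibits it as the base change of the fibration $w_\ast$ along the inclusion $\ell((u\circ g)/v')\hookrightarrow \map(Z,A)\times_{\map(Z,B)}\map(Z,B')$, $b'\mapsto (g,b')$: indeed a point of $\ell(g/v)$ is a map $\tilde g\colon Z\to A'$ with $v_\ast\tilde g = g$, i.e. a point of $\map(Z,A')$ lying over $(g, u'_\ast\tilde g)$. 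Being a pullback of $w_\ast$, this map is a fibration, and its fiber over $f$ is the fiber of $w_\ast$ over $(g,f)=h$, which is precisely $\ell(h/w)$. The second description is obtained by running the same argument with the two factors of the pullback interchanged — i.e. first taking the fiber of $u'_\ast$ over $f$, namely $\ell(f/u')$, and then the induced map down to $\ell((v'\circ f)/u)$ via $v_\ast$; here $g$ lies in $\ell((v'\circ f)/u)$ precisely because $u\circ g = v'\circ f$, and the same base-change observation identifies the fiber over $g$ with $\ell(h/w)$.

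I do not anticipate a genuine obstacle. The mathematical content is the elementary fact that, for a commutative square of fibrations, an iterated strict fiber — the fiber over $g$, and then the fiber over $f$ — computes the fiber over $(g,f)$ of the comparison map to the pullback; everything else is the passage through $\map(Z,-)$ and the identification of each $\ell$-space with such a strict fiber. The only points demanding any care are the fibration assertions (which rely on $Z$ being cofibrant and on the standing hypothesis that $w\colon A'\to A\times_B B'$ is a fibration) and the harmless bookkeeping of which named basepoint sits in which fiber, governed entirely by the relation $u\circ g = v'\circ f$.
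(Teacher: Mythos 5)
Your argument is correct and is essentially the paper's own proof: both pass to the commutative square of mapping spaces $\map(Z,A')$, $\map(Z,B')$, $\map(Z,A)$, $\map(Z,B)$ and identify $\ell(h/w)$ as its total fiber, computed once via vertical fibers and once via horizontal fibers. Your phrasing in terms of base change of $w_\ast$ along the inclusion of $\ell((u\circ g)/v')$ is just a more explicit bookkeeping of the same iterated-fiber computation, with the fibration and cofibrancy points spelled out.
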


\begin{proof} Consider the commutative square of mapping spaces
\[
\xymatrix{
\map(Z,A') \ar[r] \ar[d] & \map(Z,B')\ar[d]\\
\map(Z,A) \ar[r] & \map(Z,B)\, .
}
\]
and note that spaces of the latter with the exception of  the upper left corner are based by the maps $f$, $g$ and $v'\circ f = u\circ g$.
Each of the maps in this square is a fibration.

By definition, the total fiber of this square is the space $\ell(h/w)$. The proof is completed by the observation that the
total fiber has two alternative descriptions. The first is the first take the map of vertical fibers and then to take the fiber again; this gives the first
assertion of the lemma. The second alternative description
is to take the map of horizontal fibers and then to take the fiber again. The latter yields the second assertion of the lemma.
\end{proof}

\subsection{Classifying spaces}

Let $\scr G_q$ denote the category whose objects are spaces $C$ having the homotopy type of the sphere $S^{q-1}$. A morphism
$C\to C'$ is a weak equivalence. 

Recall that the classifying space $B\scr C$ of a category $\cal C$ is the geometric realization of its nerve.

\begin{lem}[cf.~{\cite[prop.~2.2.5]{Wald_1126}}]The space $B\scr G_q$ is a model for $B\Gr_q$, the classifying space of $(q-1)$-spherical fibrations.
\end{lem}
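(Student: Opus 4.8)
The plan is to produce over $B\scr G_q$ a tautological $(q-1)$-spherical fibration, to observe that it is classified by a map $\phi\colon B\scr G_q \to B\Gr_q$, and then to prove that $\phi$ is a weak equivalence by showing that pullback along $\phi$ identifies $[Z, B\scr G_q]$ with the set of fiber homotopy equivalence classes of $(q-1)$-spherical fibrations over $Z$, for every CW complex $Z$. (Equivalently, one may first identify $B\Gr_q$ with $B\,\mathrm{haut}(S^{q-1})$, the classifying space of the topological monoid of self homotopy equivalences of $S^{q-1}$, by the standard Dold--Lashof/Stasheff/May theory, and then compare $B\scr G_q$ with that model; this is the route Waldhausen's \cite[prop.~2.2.5]{Wald_1126} effectively follows.) For the tautological fibration, let $U\colon \scr G_q \to \Top$ be the inclusion functor $C \mapsto C$, set $\mathcal E := \hocolim_{\scr G_q} U$, and let $\pi\colon \mathcal E \to \hocolim_{\scr G_q} \ast = B\scr G_q$ be the map induced by the terminal transformation $U \to \ast$; after a fibrant replacement we may take $\pi$ to be a Serre fibration. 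Because every morphism of $\scr G_q$ is a weak equivalence, the homotopy fiber of $\pi$ over any point of $B\scr G_q$ is weakly equivalent to the value of $U$ there, hence to $S^{q-1}$, so $\pi$ is a $(q-1)$-spherical fibration and determines $\phi$.

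For universality, it remains to see that $\phi$ induces a bijection on $[Z,-]$. For surjectivity, given a $(q-1)$-spherical fibration $\xi\colon E \to Z$, choose a simplicial (or CW) structure on $Z$; the restriction of $\xi$ to each simplex is fiber homotopy trivial, and choosing trivializations compatibly, simplex by simplex and over the faces, amounts precisely to a diagram indexed by the poset of simplices of $Z$ and valued in $\scr G_q$, i.e.\ a map $Z \to |N\scr G_q| = B\scr G_q$; unwinding the construction of $\mathcal E$ shows that the pullback of $\pi$ along this map is fiber homotopy equivalent to $\xi$. For injectivity one runs the same argument over $Z \times \Delta^1$ relative to the two ends, producing a concordance, hence a homotopy of classifying maps. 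Together these show $\phi$ is a weak equivalence, so $B\scr G_q$ is a model for $B\Gr_q$.

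The technical heart, and the step I expect to be the main obstacle, is the rigidification hidden in "choosing trivializations compatibly": upgrading the statement that $\xi$ is fiberwise trivial over each simplex into an honest functor from the poset of simplices of $Z$ into $\scr G_q$, and likewise showing that two such functors arising from homotopic choices become connected in the nerve. As is standard in classifying-space-of-fibration arguments, this is made to work because $\scr G_q$ is closed under the relevant homotopy constructions: mapping cylinders of its morphisms and products $C \times \Delta^k$ of an object with a simplex again have the homotopy type of $S^{q-1}$, so they lie in $\scr G_q$, and the inclusions and projections associated to them are morphisms of $\scr G_q$. This closure lets one interpolate between homotopic local choices inside the nerve itself (a homotopy between two parallel morphisms is witnessed by a morphism out of a cylinder object), and iterating it supplies the higher coherences needed to assemble the cellwise data into a well-defined map to $B\scr G_q$ and to compare it, on realizations, with the topologically enriched nerve of $\scr G_q$ in the alternative formulation.
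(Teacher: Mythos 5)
The paper does not actually prove this lemma: it is quoted directly from Waldhausen \cite[prop.~2.2.5]{Wald_1126}, which identifies the nerve of a category of weak equivalences between objects of a fixed homotopy type with the classifying space of the topological monoid of homotopy automorphisms of that type (the set-theoretic problem that $\scr G_q$ is not small is dealt with in the remark immediately following the lemma). So your parenthetical route---identify $B\Gr_q$ with the classifying space of the monoid of self homotopy equivalences of $S^{q-1}$ and compare $B\scr G_q$ with it via Waldhausen---is exactly the paper's argument, while your primary route (tautological fibration $\hocolim_{\scr G_q} U \to B\scr G_q$ plus a direct verification of the universal property on $[Z,-]$) is a genuinely different, from-scratch classification proof. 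The ingredients you use there are sound: since every morphism of $\scr G_q$ is a weak equivalence, the homotopy fibers of $\hocolim_{\scr G_q} U \to B\scr G_q$ are the values of $U$, hence of the homotopy type of $S^{q-1}$, and representability of fiber homotopy equivalence classes of $(q-1)$-spherical fibrations by $B\Gr_q$ is classical Dold--Lashof/Stasheff/May theory.

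Two caveats on your primary route. The step you yourself flag as the technical heart---rigidifying simplexwise fiber homotopy trivializations into an honest functor to $\scr G_q$ together with all higher coherences---is only gestured at, and as written it is the one place where the argument could stall; but it can be bypassed entirely: for a $(q-1)$-spherical fibration $\xi\colon E \to Z$ over a simplicial base, the assignment $\sigma \mapsto \xi^{-1}(\sigma)$, with the inclusions $\xi^{-1}(\sigma) \subset \xi^{-1}(\tau)$ for $\sigma \subset \tau$, is already a functor from the poset of simplices of $Z$ to $\scr G_q$ (each $\xi^{-1}(\sigma)$ has the homotopy type of the fiber because $\sigma$ is contractible, and the inclusions are weak equivalences), and its realization, the barycentric subdivision of $Z$, gives the classifying map without choosing any trivializations; injectivity is handled the same way over $Z \times \Delta^1$. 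With that modification the coherence issue you worry about essentially disappears, and what remains is the routine comparison of $\xi$ with the pullback of the tautological fibration. Second, you should say something about the smallness of $\scr G_q$ (e.g.\ replace it by an equivalent small full subcategory, or argue as in \cite[p.~379]{Wald_1126}), since otherwise $B\scr G_q$ is not literally defined; the paper addresses this point separately in a remark.
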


\begin{rem} The category $\scr G_q$ is not small. Nevertheless, there are various ways to address the difficulty of defining its classifying space $B\scr G_q$ (cf.~\cite[p.~379]{Wald_1126}).
\end{rem}

\subsubsection{Stabilization; quotients} 
For spaces
$A$ and $B$, the {\it  join} is defined as
\[
A\ast B := (c(A) \times B) \cup_{A\times B} (A \times c(B))
\]
where $c(A)$ is the cone on $A$. 

\begin{rem} If $A$ and $B$ are based, then $A\ast B \simeq \Sigma (A\smsh B)$, i.e., the suspension of the smash
product of $A$ and $B$.
In particular, if $A$ is $r$-connected and $B$ is $s$-connected, we infer that $A\ast B$ is $(r+s+2)$-connected.
\end{rem}

For $q<n$, the stabilization map 
\[
B\Gr_q \to B\Gr_{n}
\]
 is induced by the functor
\[
S^{n-q-1}{\ast}{-}\: \scr G_q \to  \scr G_{n}
\]
defined by $C\mapsto S^{n-q-1}\ast C$.

\begin{rem} Any homotopy equivalence $W @> \simeq >> S^{k-1}$ induces
a natural transformation of functors 
\[
W{\ast}{-},S^{k-1}{\ast}{-}\:\scr G_q \to \scr G_{q+k}
\]
which is an object-wise equivalence. It follows that the associated maps of classifying spaces
$B\Gr_q \to B\Gr_{q+k}$ are homotopic.
\end{rem}

The homotopy fiber of the map $B\Gr_q  \to  B\Gr_{n}$ at the basepoint is the (homotopy) quotient 
\[
\Gr_{n}\!/\Gr_q \,. 
\]
Concretely, it may be described as the classifying space of the over category  
\[
(S^{n-q-1}{\ast}{-})_{/S^{n-1}}
\]
whose objects are pairs $(C,h)$, in which $C\in \scr G_q$ is an object
and $h\: S^{n-q-1}{\ast}C @> \simeq >>S^{n-1}$ is a homotopy equivalence. A morphism $(C,h) \to (C',h')$ is a homotopy equivalence $f\: C\to C'$ such that
$h'\circ (f\ast \text{id}_C) = h$.

If we set $\scr G = \colim_q \scr G_q$, then $B\scr G$ is a model for $B\Gr$, the classifying space
of stable spherical fibrations. The homotopy quotient 
\[
\Gr\!/\Gr_q,  
\]
has an obvious categorical description that is left to the reader.

There is also a categorical model for the classifying space $B\Or_q$ of real $q$-plane bundles. However, one must take care to include a simplicial direction
that takes into account the topology of the orthogonal group.
Namely, one defines a simplicial category $\scr O_q$ whose objects in simplicial degree $k$ are $k$-dimensional inner product spaces $V$. A  morphism $V\to W$ consists of a
$k$-parameter family of isometries $\Delta^k \times V \to W$ parametrized by the standard $k$-simplex $\Delta^k$. Then $B\scr O_q$ models $B\Or_q$. 
In order to define  the canonical map
$B\Or_q \to B\Gr_q$ we also blow up $\scr G_q$ to a simplicial category $\hat{\scr G}_q$ in a similar way 
and then one has a pair of functors 
\[
\scr O_q\to \hat {\scr G}_q @< \sim << \scr G_q \, ,
\] 
the first which is defined by sending
an inner product space to its unit sphere and the second is defined
as an inclusion by constant families. The latter functor defines an equivalence on classifying spaces.
Note that the map $B\Or_q \to B\Gr_q$ is compatible with stabilization, so we also have a map 
\[
p\:B\Or \to B\Gr
\] 
from the classifying space of stable vector bundles to the classifying space of stable
spherical fibrations. It is well-known that this map is $2$-connected. Without loss in generality, we will assume that $p$ has been converted to a fibration.
The fiber is the quotient $\Gr\!/\Or$.

\subsection{Acyclic maps and relative lifts}
Fix a commutative diagram
\[
\xymatrix{
X \ar[r]^u \ar[d]_f & E \ar[d]^p \\
Y\ar@{..>}[ur] \ar[r]_v & B
}
\]
in which $p$ is a fibration and $f$ is a cofibration.
The dotted arrow represents lifts of $p$ along $Y$ relative to $X$. 

\begin{lem} \label{lem:acyclic-characterization} With respect to the above assumptions, if $f$ is an acyclic map and $p\: E\to B$ induces an isomorphism on fundamental groups, then the space of lifts is weakly contractible. In particular,
it is non-empty.
\end{lem}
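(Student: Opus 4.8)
The plan is to deduce everything from obstruction theory carried out relative to the subspace $X$. After arranging, as we may, that $f\:X\to Y$ is the inclusion of a relative CW complex (this does not change the weak homotopy type of the space of lifts, since $p$ is a fibration), one pulls $p$ back along $v$ to obtain a fibration $q\:v^\ast E\to Y$ with fibre $F\simeq\operatorname{hofib}(p)$, equipped with a section $s_0\:X\to v^\ast E$ over $X$ corresponding to $u$. The space of lifts in the statement is then identified with the space of sections of $q$ that restrict to $s_0$ on $X$, i.e.\ with the homotopy fibre over $s_0$ of the restriction fibration $\Gamma(Y;v^\ast E)\to\Gamma(X;v^\ast E|_X)$. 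The hypothesis that $p$ is a $\pi_1$-isomorphism enters at exactly this point: from the long exact homotopy sequence of $p$ one reads off $\pi_1(F)\cong\operatorname{coker}(\pi_2 E\to\pi_2 B)$, which is abelian and carries a natural $\pi_1(B)$-action, so that, after pulling the fibre-transport actions back along $v$, the fibrewise homotopy groups $\pi_n(F)$ give honest local coefficient systems $\underline{\pi_n F}$ on $Y$ for every $n\ge 1$; for $n=0$ one uses that an acyclic map is in particular a $\pi_0$-isomorphism. Thus the obstruction theory for sections of $q$ relative to $X$ is genuinely linear and governed by the cohomology groups $H^\ast(Y,X;\underline{\pi_\ast F})$.

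Granting that, I would invoke the standard conclusions of this obstruction theory: the obstructions to the existence of a section extending $s_0$ lie in the groups $H^{n+1}(Y,X;\underline{\pi_n F})$, and once such a section exists, the homotopy groups at any basepoint of the space of such sections are assembled from the groups $H^{n}(Y,X;\underline{\pi_{n+k}F})$. So it suffices to show that $H^\ast(Y,X;\mathcal A)=0$ for every local coefficient system $\mathcal A$ on $Y$. Since $f$ is acyclic, $f_\ast\:H_\ast(X;f^\ast\mathcal A)\to H_\ast(Y;\mathcal A)$ is an isomorphism for every $\mathcal A$, and the long exact homology sequence of the pair then forces $H_\ast(Y,X;\mathcal A)=0$. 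Equivalently (working one component of $Y$ at a time), the relative cellular chain complex $C_\ast(\widetilde Y,\widetilde X)$ of the universal covers is a bounded-below complex of free $\Bbb Z[\pi_1 Y]$-modules with vanishing homology, hence chain contractible over $\Bbb Z[\pi_1 Y]$; applying the additive functor $\operatorname{Hom}_{\Bbb Z[\pi_1 Y]}(-,A)$ to a contracting homotopy shows that $H^\ast(Y,X;\mathcal A)=0$ for every coefficient module $A$, in every degree.

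Combining the two steps, every obstruction group vanishes, so a lift of $v$ along $f$ relative to $u$ exists, and all of the cohomology groups computing the homotopy of the space of such lifts also vanish, so this space has trivial homotopy groups at every basepoint; hence it is weakly contractible, and in particular non-empty, which is the assertion. The step I expect to demand the most care is the low-degree bookkeeping: verifying that, with $F$ only assumed connected and not simply connected, the \emph{set} of relative lifts and the $\pi_0$ and $\pi_1$ of the lift space are indeed controlled by $H^{\le 2}(Y,X;\underline{\pi_\ast F})$ — this is precisely what the $\pi_1$-isomorphism hypothesis pays for. A clean way to sidestep the nonabelian subtleties is to observe that, $C_\ast(\widetilde Y,\widetilde X)$ being contractible over $\Bbb Z[\pi_1 Y]$, one need not pass through cohomology at all: one runs the extension/uniqueness induction skeleton by skeleton, the set of extensions over each newly attached family of cells being a non-empty affine set over a relative cochain group that has just been shown to be acyclic.
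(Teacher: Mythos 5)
Your argument is correct in substance, but it takes a genuinely different route from the paper. The paper's proof is very short: it identifies the space of relative lifts with the total fiber of the square of mapping spaces $\map(Y,E)\to\map(X,E)$, $\map(Y,B)\to\map(X,B)$, and quotes \cite[prop.~3.1]{Hausmann-Husemoller}, which says that the fiber of $f^\ast\colon\map(Y,Z)\to\map(X,Z)$ over any map killing $\ker(\pi_1 f)$ is contractible; the $\pi_1$-hypothesis on $p$ is used only to see that $u$ kills $\ker(\pi_1 f)$ (because $p\circ u = v\circ f$ does and $\pi_1(p)$ is injective), so both horizontal fibers, and hence the total fiber, are contractible. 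You instead reprove the needed relative case of that result from scratch: you recast the lift space as a space of sections of $v^\ast E\to Y$ relative to $X$ and run relative obstruction theory, reducing everything to the vanishing of $H^\ast(Y,X;\mathcal A)$ for all local systems $\mathcal A$ on $Y$, which you correctly extract from acyclicity via the chain contractibility of $C_\ast(\widetilde Y,\widetilde X)$ over $\mathbb{Z}[\pi_1 Y]$ (here $\widetilde X$ is the cover of $X$ induced from $\widetilde Y$, which is what your relative chain argument actually uses). Your route is self-contained at the price of the cellular and obstruction-theoretic bookkeeping; the paper's route is shorter and avoids CW approximation, but delegates the real content to the reference.

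One step you should make explicit, since it is exactly the crux: for the obstruction cochains to take values in honest local systems $\underline{\pi_n F}$ on $Y$, you need the fiber $F$ to be \emph{simple}; abelianness of $\pi_1(F)$ alone does not suffice, because the fiber-transport action of $\pi_1(B)$ on $\pi_n(F)$ is a priori well defined only modulo the $\pi_1(F)$-action. This is where the hypothesis really pays: the $\pi_1(F)$-action on $\pi_n(F)$ extends to an action of $\pi_1(E)$, and the long exact sequence of $p$ shows that $\pi_1(F)\to\pi_1(E)$ is trivial because $\pi_1(p)$ is injective; hence $F$ is simple, $\underline{\pi_n F}$ is a genuine $\pi_1(B)\cong\pi_1(E)$-module pulled back along $v$, and (with $E$ and $B$ connected, as in the paper's applications) $F$ is connected, which you need when extending sections over cells of dimension $\le 1$. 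With these points spelled out, your skeleton-by-skeleton induction and the vanishing of all the relative cohomology groups give weak contractibility of the lift space, the same conclusion as the paper's proof.
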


\begin{proof} Consider the commutative square of mapping spaces
\[
\xymatrix{
\map(Y,E) \ar[r]^{f^*} \ar[d]_{p_\ast} & \map(X,E) \ar[d]^{p_\ast} \\
\map(Y,B) \ar[r]_{f^*} & \map(X,B)
}
\]
in which all arrows are fibrations.
By \cite[prop.~3.1]{Hausmann-Husemoller}, the fibers of the arrows labeled $f^*$ (taken at the basepoints defined by $f,u$ and $v$) are contractible.
Note that for the top horizontal
map, we have used the $\pi_1$-condition. Consequently, the total fiber of the square with respect to these basepoints is also
contractible. But the total fiber coincides with the space of lifts. 
\end{proof}

Suppose that $f\: X\to Y$ is an acyclic map of Poincar\'e duality spaces
of dimension $d$. Without loss in generality, by replacing $f$ by its mapping cylinder, 
we may assume $f$ is a cofibration.

By Corollary \ref{cor:induced} below, if  $Y \to B\Gr$ classifies the Spivak fibration of $Y$, then the composition
\[
X@> f >> Y \to BG
\]
classifies the Spivak fibration of $X$. If in addition the Spivak fibration is equipped with a lift $X \to B\Or$, then we have a 
lifting problem
\[
\xymatrix{
X \ar[r] \ar[d]_f & B\Or \ar[d]^p \\
Y\ar@{..>}[ur] \ar[r] & B\Gr
}
\]
By Lemma \ref{lem:acyclic-characterization} we have the following.

\begin{cor} \label{cor:acyclic-characterization} With respect to these assumptions, the space
of relative lifts is weakly contractible. In particular, it is non-empty. 
\end{cor}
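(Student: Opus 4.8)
The plan is to obtain this corollary as an immediate application of Lemma \ref{lem:acyclic-characterization} to the lifting square displayed just above the statement. That lemma has exactly two hypotheses: the left-hand vertical map must be an acyclic cofibration, and the right-hand fibration must induce an isomorphism on $\pi_1$. The first is precisely the standing assumption, since $f\colon X\to Y$ has been replaced by its mapping cylinder and is therefore an acyclic cofibration. For the second, I would invoke the fact recalled above that $p\colon B\Or\to B\Gr$ is $2$-connected --- equivalently, that its fiber $\Gr\!/\Or$ is $1$-connected --- so the homotopy exact sequence of the fibration $\Gr\!/\Or\to B\Or\to B\Gr$ yields $\pi_1(B\Or)\xrightarrow{\;\cong\;}\pi_1(B\Gr)$; note that only this $\pi_1$-statement, not the full $2$-connectivity, is actually needed.

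One preliminary point should be spelled out before the lemma is quoted: the square really does commute, so that ``relative lift'' is meaningful. This is furnished by the discussion preceding the statement. The composite $X\to B\Or\xrightarrow{p}B\Gr$ classifies the Spivak fibration of $X$, because $X\to B\Or$ was chosen as a lift of the Spivak classifying map; and the composite $X\xrightarrow{f}Y\to B\Gr$ also classifies the Spivak fibration of $X$, by Corollary \ref{cor:induced}. Since a stable Spivak fibration is unique up to contractible choice, these two maps into $B\Gr$ agree, and after the usual (co)fibrant replacements one may take the square to be strictly commutative with $p$ a fibration.

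With these observations in hand, Lemma \ref{lem:acyclic-characterization} applies directly and shows that the space of lifts of $p$ along $Y$ relative to $X$ is weakly contractible, hence non-empty, which is exactly the assertion of the corollary. I do not anticipate any genuine obstacle: the substantive content has already been isolated in Lemma \ref{lem:acyclic-characterization} (which in turn rests on Hausmann--Husemoller's \cite[prop.~3.1]{Hausmann-Husemoller}), and what remains is only the routine bookkeeping that makes the square commute together with the trivial deduction that $2$-connectedness of $p$ supplies the required isomorphism on fundamental groups.
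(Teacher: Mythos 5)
Your proposal is correct and matches the paper's own argument: the paper likewise obtains the corollary by applying Lemma \ref{lem:acyclic-characterization} to the displayed lifting square, with $f$ made a cofibration via its mapping cylinder, the square commuting because of Corollary \ref{cor:induced}, and the $\pi_1$-hypothesis supplied by the $2$-connectivity of $B\Or\to B\Gr$. Your extra remarks on commutativity of the square are just the bookkeeping the paper carries out in the paragraph preceding the statement.
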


\begin{rem} A special case occurs when $X$ is equipped with the structure of a smooth manifold.
In this instance, the corollary shows that the map $f\: X\to Y$ can equipped with the structure of a {\it normal map}
(or {\it surgery problem}) in a preferred way.
\end{rem}

\section{Induced Poincar\'e embeddings} \label{sec:induced} Suppose $f\: X\to Y$ is an acyclic map of Poincar\'e spaces
and
 \begin{equation} \label{eqn:uninduced}
\xymatrix{
E\ar[r] \ar[d]_\xi & C \ar[d] \\
Y \ar[r] & S^n
}
\end{equation}
is a Poincar\'e embedding. Without loss in generality, we may assume that $E\to Y$ is a fibration.
Set $E' = E \times_Y X$ and let $\xi'\: E' \to X$ be the projection. Then one has a commutative square
\begin{equation} \label{eqn:induced}
\xymatrix{
E'\ar[r] \ar[d]_{\xi'} & C \ar[d] \\
X \ar[r] & S^n\, .
}
\end{equation}

\begin{lem} \label{lem:induced} The square \eqref{eqn:induced} is a Poincar\'e embedding of $X$.
\end{lem}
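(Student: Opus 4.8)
The plan is to verify the two conditions of Definition \ref{defn:PD-embedding} for the square \eqref{eqn:induced}, having first checked that the square is homotopy coCartesian. The key input is that $f\: X\to Y$ is acyclic, so by Lemma \ref{lem:hereditary} applied to the (homotopy) pullback square defining $E' = E\times_Y X$, the induced map $E'\to E$ is also acyclic; in particular it induces isomorphisms on homology with all local coefficients. This is the observation that makes everything go through.

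First I would address condition (i). Since $E\to Y$ is a fibration with homotopy fibers $\simeq S^{n-d-1}$, its pullback $\xi'\: E'\to X$ is again a fibration with the same homotopy fibers, so $\xi'$ is an $(n-d-1)$-spherical fibration. Next, coCartesianness: we have a commutative cube whose back face is the homotopy coCartesian square \eqref{eqn:uninduced}, whose front face is \eqref{eqn:induced}, and whose vertical (connecting) maps are $f\: X\to Y$, $E'\to E$, and the identities on $C$ and $S^n$. To see that the front face is homotopy coCartesian, compare the homotopy pushouts: the pushout of \eqref{eqn:induced} maps to $S^n$, and I want this to be a weak equivalence. Equivalently, the pair $(E'\to X)$ and $(E\to Y)$ have weakly equivalent mapping cones (Thom spaces), i.e. $X^{\xi'} \simeq Y^\xi$. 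This follows because the square is a pushout along the cofibration $E'\to X$ (replace $\xi'$ by a cofibration if needed) and the cofiber $X^{\xi'} = X/E'$ — wait, more carefully: $X^{\xi'}$ is the mapping cone of $\xi'$, which by the pullback description is the homotopy cofiber of $E' \to X$; using the Thom isomorphism for $\xi$ and $\xi'$ together with the homology isomorphism $E'\to E$ (acyclic) and $X\to Y$ (acyclic), one gets that $X^{\xi'}\to Y^\xi$ is a homology isomorphism with all local coefficients; and since both are simply connected (Thom spaces of spherical fibrations with $n-d-1\ge 2$, built from $(d+1)$-dimensional-ish data with a high-connectivity cell attachment), a homology isomorphism with $\mathbb Z$-coefficients suffices and the map is a weak equivalence. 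Then since the mapping cone of $E\to C$ is weakly equivalent to the mapping cone of $X^\xi \leftarrow \cdots$, the standard argument identifying the cofiber of the square shows \eqref{eqn:induced} is homotopy coCartesian.

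For condition (ii), I need the collapse homomorphism $H_n(S^n) \to H_n(X^{\xi'})$ to be an isomorphism. But by the weak equivalence $X^{\xi'}\simeq Y^\xi$ just established, this collapse map is identified with the collapse map $H_n(S^n)\to H_n(Y^\xi)$ of the given Poincaré embedding \eqref{eqn:uninduced}, which is an isomorphism by hypothesis (condition (ii) for $Y$). This finishes the verification. Finally, the remark that $C$ is Spanier–Whitehead $(n-1)$-dual to $X$ follows formally once \eqref{eqn:induced} is known to be a Poincaré embedding, or can be read off from the fact that $C$ was already $(n-1)$-dual to $Y$ and $X\to Y$ is acyclic (hence an $S$-equivalence after taking Spanier–Whitehead duals with appropriate care about local coefficients).

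The main obstacle I anticipate is the bookkeeping in the cube/pushout comparison: making precise that pulling back the Poincaré embedding square of $Y$ along the acyclic map $f$ preserves the homotopy coCartesian property and the Thom-space identification, with attention to cofibrancy/fibrancy replacements so that the homotopy pullback and homotopy pushout behave correctly. Everything reduces to the single fact that $f$ and $E'\to E$ are acyclic maps between simply connected (or appropriately $\pi_1$-controlled) spaces after passing to Thom spaces, so that homology isomorphisms upgrade to weak equivalences; once that is set up cleanly, conditions (i) and (ii) are immediate.
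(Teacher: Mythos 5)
Your setup (pulling back along the fibration, noting via Lemma \ref{lem:hereditary} that $E'\to E$ is acyclic, and verifying condition (ii) from the weak equivalence $X^{\xi'}\simeq Y^{\xi}$) is sound and matches the spirit of the paper's argument. But there is a genuine gap at the key step, the homotopy coCartesianness of \eqref{eqn:induced}. You assert that ``the pushout of \eqref{eqn:induced} maps to $S^n$ by a weak equivalence'' is \emph{equivalent} to ``$X^{\xi'}\simeq Y^{\xi}$,'' and then conclude coCartesianness from the latter by ``the standard argument identifying the cofiber of the square.'' That implication is false in general: an equivalence of the vertical homotopy cofibers is necessary but not sufficient for a square to be homotopy coCartesian. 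Ironically, the failure is exactly an acyclic-map phenomenon: take $\Sigma$ the Poincar\'e homology $3$-sphere and consider the square with top and left maps the identity of $\Sigma$, and bottom and right maps the acyclic map $\Sigma\to S^3$. Both vertical homotopy cofibers are contractible (the cofiber of an acyclic map to a simply connected target is acyclic and simply connected), so the map of cofibers is an equivalence, yet the homotopy pushout is $\Sigma$, not $S^3$. So knowing $X^{\xi'}\to Y^{\xi}$ is a weak equivalence does not by itself give the conclusion.

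What is missing is an argument about the pushout itself: one must show that the map $\hocolim(X \leftarrow E' \to C)\to S^n$ is an integral homology isomorphism (either by comparing the Mayer--Vietoris sequences of \eqref{eqn:induced} and \eqref{eqn:uninduced}, using that $X\to Y$ and $E'\to E$ are homology isomorphisms, or by a five-lemma argument on the pairs $(\hocolim, C)\to (S^n,C)$ using your Thom-space identification), and then that this homotopy colimit is $1$-connected (van Kampen, using that the fibers of $\xi'$ are simply connected since $n-d-1\ge 2$), so that the relative Hurewicz/Whitehead theorem upgrades the homology isomorphism to a weak equivalence. This is precisely the paper's proof; your write-up establishes the Thom-space equivalence and the simple connectivity of the Thom spaces, but never addresses the simple connectivity of the pushout or the homology comparison for the pushout map, which is where the content of the lemma lies. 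The verification of condition (i) and condition (ii) in your proposal is otherwise fine.
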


\begin{proof} The map
\[
\hocolim(X @<<< E' @>>> C) \to S^n
\]
is a homology isomorphism by a comparison of the Mayer-Vietoris sequences of the squares \eqref{eqn:induced}  and \eqref{eqn:uninduced}. 
The displayed homotopy colimit is 1-connected by the van Kampen theorem. By the relative Hurewicz theorem, the displayed map is a weak equivalence.
Moreover, the collapse  homomorphism $H_n(S^n) \to H_{n-1}(X^{\xi'})$ is an isomorphism since $X^{\xi'} \to Y^{\xi}$ is an acyclic map. The result follows.
\end{proof}

\begin{cor} \label{cor:induced} The Spivak normal fibration of $X$ is the base change of
the Spivak normal fibration of $Y$ along the map $f\: X\to Y$.
\end{cor}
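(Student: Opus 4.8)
The plan is to read the corollary off from Lemma~\ref{lem:induced}, using the Remark following Definition~\ref{defn:PD-embedding}, which identifies the normal data of a Poincar\'e embedding in a sphere, together with its collapse map, as a Spivak normal fibration.

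First I would fix a Poincar\'e embedding of $Y$ in $S^n$ for a single, sufficiently large $n$ (say $n\ge 2d+2$). Such an embedding exists for every large $n$ by the classical Spivak construction: $Y$ is a finitely dominated Poincar\'e space of dimension $d$ by Lemma~\ref{lem:target-pd}, so embedding a finite complex equivalent to $Y$ in $S^n$ and passing to the boundary of a regular neighbourhood produces a square~\eqref{eqn:uninduced} whose normal data $\xi\colon E\to Y$ is an $(n{-}d{-}1)$-spherical fibration (cf.~\cite{Spivak},~\cite{Wall_book}); condition~(ii) of Definition~\ref{defn:PD-embedding} is satisfied because the complement is Spanier--Whitehead $(n{-}1)$-dual to $Y$ and hence has vanishing homology in degrees $\ge d+1$. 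By the Remark after Definition~\ref{defn:PD-embedding}, $\xi$ together with the collapse $S^n\to Y^\xi$ is a Spivak normal fibration for $Y$, and since $n{-}d{-}1$ lies in the stable range it represents \emph{the} (stable) Spivak fibration of $Y$.

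Next I would apply Lemma~\ref{lem:induced} to this embedding. Its conclusion is precisely that the induced square~\eqref{eqn:induced} is a Poincar\'e embedding of $X$ in $S^n$, and by construction its normal data is the projection $\xi'\colon E'=E\times_Y X\to X$, i.e.\ the base change $f^\ast\xi$. Invoking the same Remark once more, $\xi'$ together with its collapse is a Spivak normal fibration for $X$; as $n$ is again in the stable range, it represents the Spivak fibration of $X$. Since $\xi'=f^\ast\xi$ by the definition of the pullback, this is exactly the assertion of the corollary: the Spivak fibration of $X$ is the base change along $f$ of that of $Y$; equivalently, if $Y\to B\Gr$ classifies the Spivak fibration of $Y$, then the composite $X\xrightarrow{\,f\,}Y\to B\Gr$ classifies that of $X$ --- the form in which the corollary is used in \S\ref{sec:prelim}.

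The only point requiring care is the bookkeeping around stability and the existence of the ambient embedding, together with the mild need to avoid circularity: I would \emph{not} invoke Theorem~\ref{bigthm:pd-hom-connected} here, since its proof comes later, but rather the classical Spivak construction, which already supplies Poincar\'e embeddings in all sufficiently large spheres, so that $n{-}d{-}1$ may be chosen as large as needed for the uniqueness-up-to-contractible-choice of the Spivak fibration to apply. A self-contained alternative that bypasses Poincar\'e embeddings altogether is to start from a Spivak fibration $(\xi_Y,\alpha_Y)$ of $Y$, form the base change $f^\ast\xi_Y$ and the induced map of Thom spaces $\bar f\colon X^{f^\ast\xi_Y}\to Y^{\xi_Y}$, observe that $\bar f$ is a map of homotopy pushouts whose three constituent maps are acyclic (the sphere-bundle map by Lemma~\ref{lem:hereditary}) and hence acyclic, note that both Thom spaces are simply connected so that $\bar f$ is actually a weak equivalence, lift $\alpha_Y$ along $\bar f$ to $\alpha_X\colon S^{d+k}\to X^{f^\ast\xi_Y}$, and verify $U\cap(\alpha_X)_\ast[S^{d+k}]=[X]$ using $f_\ast[X]=[Y]$ from the proof of Lemma~\ref{lem:target-pd} and naturality of the Thom isomorphism; uniqueness of the Spivak fibration then concludes. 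Either way, the substantive work is already contained in Lemma~\ref{lem:induced} (or in Lemma~\ref{lem:target-pd}), so I anticipate no genuine obstacle.
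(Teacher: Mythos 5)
Your main argument is exactly the paper's (implicit) one: the corollary is read off from Lemma \ref{lem:induced} applied to a Poincar\'e embedding of $Y$ in a large sphere supplied by the classical Spivak/regular-neighborhood construction, together with the Remark identifying the normal data plus collapse of such an embedding with the Spivak fibration and its stable uniqueness. The only blemish is the parenthetical justification of condition (ii): the complement's reduced homology is concentrated in degrees $\le n-2$ (and one only needs vanishing in degrees $n-1$ and $n$), not ``vanishing in degrees $\ge d+1$'' --- a slip in a standard background fact, not a gap in the argument.
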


We next show that lemma has a partial converse. Suppose 
\begin{equation} \label{eqn:PDembX}
\xymatrix{
E\ar[r] \ar[d]_{\xi} & C \ar[d] \\
X \ar[r] & S^n\, .
}
\end{equation}
is a Poincar\'e embedding and $f\: X\to Y$ is an acyclic map. 

\begin{lem}\label{lem:converse}
Assume that 
the composition
\[
\text{ker}(\pi_1(f)) \to \pi_1(X) @> w_1(\xi) >> \Bbb Z/2
\]
is trivial, where $w_1(\xi)$ is the first Stiefel-Whitney class of $\xi$.

Then the given Poincar\'e embedding \eqref{eqn:PDembX} is induced up to concordance by a Poincar\'e embedding of $Y$ in $S^n$.
\end{lem}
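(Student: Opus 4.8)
The plan is to build the Poincaré embedding of $Y$ by producing, in turn, the three pieces of data in Definition \ref{defn:PD-embedding}: the normal spherical fibration over $Y$, the complement, and the homotopy coCartesian square, and then to verify that base-changing along $f$ recovers the given embedding \eqref{eqn:PDembX} up to concordance. First I would dispose of the spherical fibration. The hypothesis that $\ker(\pi_1(f)) \to \pi_1(X) \xrightarrow{w_1(\xi)} \Bbb Z/2$ is trivial, combined with the fact that $f^*\colon H^1(Y;\Bbb Z/2) \to H^1(X;\Bbb Z/2)$ is an isomorphism (acyclicity), lets me descend $w_1(\xi)$ to a class on $Y$; more to the point, I claim the classifying map $X \to B\Gr_{n-d-1}$ of $\xi$ descends along $f$ to a map $Y \to B\Gr_{n-d-1}$. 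This is where Lemma \ref{lem:acyclic-characterization} (or rather the relative-lifting circle of ideas around Corollary \ref{cor:acyclic-characterization}) does the work: $B\Gr_{n-d-1}$ does not have abelian $\pi_1$, so one cannot apply that lemma verbatim, and this is precisely why the $w_1$-hypothesis is needed — it is exactly the obstruction to the existence of a map $Y \to B\Gr_{n-d-1}$ restricting (up to homotopy) to the given one on $X$, since the fiber of $B\Gr_{n-d-1} \to B(\Bbb Z/2)$ (classifying orientable fibrations) has the homotopy type of a space with no $\pi_1$-issue. Once the map $Y \to B\Gr_{n-d-1}$ is in hand, let $\xi_Y \colon E_Y \to Y$ be the associated fibration; then $f^*\xi_Y \simeq \xi$ over $X$ by construction, so $E' := E_Y \times_Y X \simeq E$.

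Next I would construct the complement and the square. The idea is that the complement should not change: take $C$ as it is, together with the map $C \to S^n$, and form the homotopy pushout
\[
W \;:=\; \hocolim(Y \xleftarrow{\ \xi_Y\ } E_Y \longrightarrow C).
\]
I must produce a map $W \to S^n$ and show it is a weak equivalence. The map exists because the given square \eqref{eqn:PDembX} supplies compatible maps $X \to S^n$ and $C \to S^n$, and the map $X \to Y$ together with the comparison $E \to E_Y$ induces $W$-to-$S^n$ after checking the pushout data agrees (here one uses that $C \to S^n$ is the same on both sides and that $E \to X \to Y$ factors through $E_Y$). That $W \to S^n$ is a weak equivalence follows exactly as in the proof of Lemma \ref{lem:induced}, run in reverse: a Mayer–Vietoris comparison shows $W \to S^n$ is a homology isomorphism with all local coefficients — crucially, $\hocolim(X \leftarrow E \to C) \to \hocolim(Y \leftarrow E_Y \to C)$ is acyclic because $X \to Y$ and $E \to E_Y$ are acyclic and acyclicity is stable under homotopy pushout (Lemma \ref{lem:hereditary} gives the needed hereditary behavior, or one argues directly with Mayer–Vietoris) — and then $W$ is simply connected by van Kampen (using that $\hocolim(X \leftarrow E \to C) \simeq S^n$ is simply connected and the map to $W$ is acyclic hence a $\pi_1$-epimorphism with perfect kernel, combined with $n \ge d+3 \ge 4$), so the relative Hurewicz theorem finishes it. Condition (ii) of Definition \ref{defn:PD-embedding} for $Y$ — that $H_n(S^n) \to H_n(Y^{\xi_Y})$ is an isomorphism — follows because $X^\xi \to Y^{\xi_Y}$ is an acyclic map (it is the cofiber of an acyclic map of spherical fibrations, so acyclic by the Thom isomorphism / Lemma \ref{lem:hereditary}), and condition (ii) holds for $X$ by hypothesis.

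Finally I would check that this Poincaré embedding of $Y$, base-changed along $f$ as in Lemma \ref{lem:induced}, is concordant to the original one on $X$. By construction the base change has normal data $f^*\xi_Y \simeq \xi$ and the same complement $C$; the only content is that the two homotopy coCartesian squares over $X$ — the original \eqref{eqn:PDembX} and the pulled-back one — are concordant. Both have the same corners and the same maps $E \to C$, $E \to X$, $C \to S^n$; the ambiguity is a homotopy between the two composite maps $X \to S^n$, and since both squares are homotopy coCartesian with $X \leftarrow E \to C$ pushing out to $S^n$, such squares are classified by $\pi_0$ of a contractible (or at least connected) space of null-homotopies / pushout structures, so one can glue them into a concordance over $\Delta^1$. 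I expect the genuine obstacle to be the first step: descending the classifying map $X \to B\Gr_{n-d-1}$ along $f$ and correctly identifying the $w_1$-hypothesis as the precise obstruction, because $B\Gr_{n-d-1}$ is not simply connected and Lemma \ref{lem:acyclic-characterization} does not apply off the shelf — one must pass to the universal cover or to the orientable-fibration classifying space to bring the relative-lifting machinery to bear. The Mayer–Vietoris / Hurewicz verification and the concordance argument are, by contrast, routine once the normal fibration over $Y$ is available.
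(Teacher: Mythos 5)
Your overall strategy matches the paper's (descend the normal data along $f$, keep the complement $C$, and identify the new homotopy pushout with $S^n$ by a Mayer--Vietoris/van Kampen/Hurewicz comparison), but two genuine gaps remain, and they are exactly the two places where the factorization property of acyclic maps must be invoked. First, the descent of the classifying map: you flag it as ``the genuine obstacle'' and only gesture at it. The paper settles it in one stroke with \cite[prop.~3.1]{Hausmann-Husemoller}: a map out of $X$ factors, uniquely up to homotopy, through the acyclic map $f$ if and only if it kills $\ker(\pi_1 f)$; applied to the classifying map $X\to B\Gr_{n-d}$ (note the index: an $(n-d-1)$-spherical fibration is classified by $B\Gr_{n-d}$ in the paper's conventions, not $B\Gr_{n-d-1}$), the condition is precisely the $w_1$-hypothesis, since the induced map $\pi_1(X)\to\pi_1(B\Gr_{n-d})\cong\Bbb Z/2$ is $w_1(\xi)$. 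In particular $\pi_1(B\Gr_{n-d})$ is abelian, contrary to your assertion; the reason Lemma \ref{lem:acyclic-characterization} does not apply off the shelf is not nonabelianness but that there is no relative lifting problem until you manufacture one, e.g.\ via the fibration $B\SGr_{n-d}\to B\Gr_{n-d}\to B\Bbb Z/2$ (whose total-space-to-base map is an isomorphism on $\pi_1$) --- that route can be made to work, but you have not carried it out.

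Second, and more seriously: to form $W=\hocolim(Y\leftarrow E_Y\to C)$ you need a map $E_Y\to C$, and your proposal never constructs it --- the sentence beginning ``The map exists because\dots'' concerns $W\to S^n$, not $E_Y\to C$. Producing $E_Y\to C$ means factoring the given $E\to C$ through the acyclic map $E\to E_Y$ (acyclic by Lemma \ref{lem:hereditary}), which is a second application of the same factorization principle and again carries a $\pi_1$-condition, namely that $\ker(\pi_1(E\to E_Y))\cong\ker(\pi_1 f)$ maps trivially to $\pi_1(C)$; the paper dispatches this as ``a similar argument'' via the same proposition. Without that map the pushout square over $Y$, hence the claimed Poincar\'e embedding of $Y$, does not exist. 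The remaining steps you outline (the Mayer--Vietoris comparison, simple connectivity of the pushout via van Kampen, condition (ii) of Definition \ref{defn:PD-embedding} via the induced map of Thom spaces, and the concordance between the original square and the base-changed one) are essentially the paper's and are fine once the two factorizations are actually in place.
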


\begin{rem} The orientation sheaf of $X$ is constant if and only if $w_1(\xi) = 0$. Consequently, the composition in the statement
of the lemma is trivial in this instance.
\end{rem}

\begin{proof}[Proof of Lemma \ref{lem:converse}]  Without loss in generality, we may assume that $f$ is a cofibration and $\xi$ is a fibration.
We let $\hat \xi\:X \to B\Gr_{n-d}$ denote the classifying map
of the given $(n-d-1)$-spherical fibration  $\xi\: E\to X$.

By \cite[prop.~3.1]{Hausmann-Husemoller}, there is a factorization of $\xi$ of the form
\[
X @> f >> Y @>\hat \xi' >> B\Gr_{n-d}
\]
which is unique up to homotopy.
Consequently, $\xi$ is the base change of $\xi'\: E' \to Y$ along $f$.
A similar argument shows that the map $E\to C$ factors up to homotopy as
\[
E \to E' \to C\, .
\]
By the Mayer-Vietoris sequence, it follows that $\hocolim(Y \leftarrow E' \to C)$ is identified with $S^n$, and the conclusion follows.
\end{proof}

\begin{cor}  Let $X \to Y$ by an acyclic map of Poincar\'e spaces.
Then the Spivak normal fibration of $X$ is  trivializable if and only if the Spivak normal fibration of $Y$ is trivializable.
\end{cor}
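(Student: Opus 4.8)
The plan is to treat the two implications by quite different means: the implication from $Y$ to $X$ falls straight out of base change, while the converse reduces, after a small orientability observation, to a relative lifting problem handled by Lemma~\ref{lem:acyclic-characterization}.

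For the implication from $Y$ to $X$: by Corollary~\ref{cor:induced} the Spivak fibration of $X$ is the base change along $f$ of the Spivak fibration of $Y$, and the base change of a fiber homotopy trivial fibration is fiber homotopy trivial; so if the Spivak fibration of $Y$ is trivializable, so is that of $X$. (Equivalently, if $c_Y\colon Y\to B\Gr$ classifies the Spivak fibration of $Y$, then $c_Y\circ f$ classifies that of $X$, and a null-homotopy of $c_Y$ yields one of $c_Y\circ f$.)

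For the converse, write $\xi_X$, $\xi_Y$ for the Spivak fibrations of $X$, $Y$, with classifying maps $c_X\colon X\to B\Gr$ and $c_Y\colon Y\to B\Gr$; by Corollary~\ref{cor:induced} we may take $c_X=c_Y\circ f$. Assume $\xi_X$ is trivializable, so that $c_X$ is null-homotopic. First I would record that both orientation sheaves are trivial: a trivializable spherical fibration is orientable, so $w_1(\xi_X)=0$, i.e. the orientation sheaf of $X$ is constant; since $f$ is orientable (Lemma~\ref{lem:target-pd}) this sheaf is the pullback of the orientation sheaf of $Y$, and $f^\ast\colon H^1(Y;\Bbb Z/2)\to H^1(X;\Bbb Z/2)$ is an isomorphism, so the orientation sheaf of $Y$ is constant as well, i.e. $w_1(\xi_Y)=0$. (If instead $w_1(\xi_X)\ne 0$, then neither Spivak fibration can be trivializable and there is nothing to prove.) Because $w_1(\xi_Y)=0$, the map $c_Y$ lifts across the connected double cover $B\SGr\to B\Gr$ — the classifying space of oriented stable spherical fibrations, which is simply connected since $\pi_0(\SGr)=\ast$ — to a map $\tilde c_Y\colon Y\to B\SGr$. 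Moreover $\tilde c_Y\circ f$ is a lift of the null-homotopic map $c_X=c_Y\circ f$ across this covering, hence is itself null-homotopic. Fixing a null-homotopy of $\tilde c_Y\circ f$ is the same data as a lift $X\to P$, where $p\colon P\to B\SGr$ is the path fibration whose total space $P$ (paths in $B\SGr$ starting at the basepoint) is contractible.

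The last step propagates this null-homotopy across $f$. After replacing $f$ by its mapping cylinder we may assume it is a cofibration, and we consider the relative lifting problem
\[
\xymatrix{
X \ar[r] \ar[d]_f & P \ar[d]^p \\
Y \ar@{..>}[ur] \ar[r]_{\tilde c_Y} & B\SGr
}
\]
in which the top map is the chosen lift $X\to P$. Since $P$ is contractible and $B\SGr$ is simply connected, $p$ induces an isomorphism on $\pi_1$; as $f$ is acyclic, Lemma~\ref{lem:acyclic-characterization} shows the space of relative lifts is weakly contractible, in particular nonempty. A relative lift factors $\tilde c_Y$ through the contractible space $P$, so $\tilde c_Y$ — and therefore $c_Y$ — is null-homotopic, i.e. $\xi_Y$ is trivializable. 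The one delicate point is the $\pi_1$-hypothesis of Lemma~\ref{lem:acyclic-characterization}: because $\pi_1(B\Gr)=\Bbb Z/2$ one cannot run the lifting argument directly over $B\Gr$, and the orientability bookkeeping above is exactly what licenses the descent to the simply connected $B\SGr$, where the hypothesis holds automatically. Everything else is routine.
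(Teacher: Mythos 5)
Your argument is correct, but for the nontrivial direction (Spivak fibration of $X$ trivializable $\Rightarrow$ that of $Y$ trivializable) it takes a genuinely different route from the paper. The paper stays inside the embedding-theoretic framework of \S\ref{sec:induced}: it uses the fact that the stable Spivak fibration of a Poincar\'e space is the normal data of a Poincar\'e embedding in $S^n$ for $n$ large, notes that trivializability gives $w_1(\xi)=0$ so that Lemma \ref{lem:converse} applies, and then reads off the triviality of the Spivak fibration of $Y$ from the (Hausmann--Husemoller) uniqueness of the factorization of the null-homotopic classifying map through $f$; the easy direction is Corollary \ref{cor:induced}, exactly as you do it. You instead bypass Poincar\'e embeddings entirely and work at the level of classifying maps: after the $w_1$ bookkeeping you descend to the simply connected cover $B\SGr \to B\Gr$, observe that the lifted classifying map of $X$ is null-homotopic, encode the null-homotopy as a lift to the path fibration, and invoke the relative-lift Lemma \ref{lem:acyclic-characterization} (whose $\pi_1$-hypothesis is now automatic) to propagate the null-homotopy across the acyclic map. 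Both proofs ultimately rest on the same Hausmann--Husemoller mapping-space result, so neither is ``deeper''; the paper's version is a two-line consequence of machinery already built (Lemma \ref{lem:converse} and Corollary \ref{cor:induced}) and keeps the statement aligned with the embedding theme of the section, while yours is self-contained obstruction theory, makes explicit why the orientation condition is exactly what is needed (the $\Bbb Z/2$ in $\pi_1(B\Gr)$), and in fact yields a bit more, namely an essentially unique null-homotopy of the classifying map of $Y$ compatible with a chosen one over $X$, since the space of relative lifts is weakly contractible. The small facts you use along the way (pullbacks of orientation sheaves under orientable acyclic maps, a lift of a null-homotopic map to a connected covering being null-homotopic, strict commutativity after passing to the mapping cylinder and the path fibration) are all fine.
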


\begin{proof} The Spivak normal fibration of a Poincar\'e space $X$ defines a Poincar\'e embedding of it in $S^n$ for $n$ sufficiently large and vice-versa.
If the Spivak fibration of $X$ is trivializable, then its first Stiefel-Whitney class is trivial and it follows from Lemma \ref{lem:converse}
that the Spivak normal fibration of $Y$ is trivializable. The converse is a direct consequence of Corollary \ref{cor:induced}.
\end{proof}

\section{Proof of Theorems A-D}\label{sec:a-d}

 \begin{proof}[Proof of Theorem \ref{bigthm:pd-hom-connected}]  Let $Y$ be the plus construction on $X$. Then $Y$ is an $r$-connected Poincar\'e duality space of
 dimension $d$ by Lemma \ref{lem:target-pd}. If $r\le d-3$, then
  $Y$ Poincar\'e embeds in $S^{2d-r+1}$ by \cite[thm.~A]{haef}.
   The result follows by applying Lemma \ref{lem:induced}.
  \end{proof}
 
 \begin{rem} It is possible that \cite[thm.~A]{haef} can be improved by one dimension; see \cite[p.598]{haef}, \cite[thm.~F]{Klein_compress}.
 \end{rem}
 
\begin{proof}[Proof of Theorem \ref{bigthm:metastable}]
If $n \le 5$, then the inequality $3d+ 3\le 2n$ implies that $d\le 2$. In this instance,
$X$ and $Y$ are surfaces and the result is automatic.

Assume then that $n \ge 6$. Then $3d+3\le 2n$ implies
that $d\le n-3$. If $Y$ smoothly embeds in $S^n$ then it Poincar\'e embeds in $S^n$.
By Lemma \ref{lem:induced}, $X$ Poincar\'e embeds in $S^n$. Applying \cite[cor.~11.3.2]{Wall_book},
we obtain a smooth embedding of $X$ in $S^n$. 

Conversely, if $X$ smoothly embeds in $S^n$ then $X$ Poincar\'e embeds in $S^n$.
Apply Lemma \ref{lem:converse} to obtain a Poincar\'e embedding of $Y$ in $S^n$. Then apply 
 \cite[cor.~11.3.2]{Wall_book}, to obtain a smooth embedding of $Y$ in $S^n$.
\end{proof}

\begin{proof} [Proof of Theorem \ref{bigthm:hom-connected}] Set $n := 2d-r$. Then $d \ge 2r+3$ implies $3d+3 \le 2n$, so we are in the metastable range.
 If $r = 0$, then $X$ smoothly embeds in $S^{2d}$ by Whitney's ``hard'' embedding theorem.

Assume then that $ r\ge 1$.
Let $Y = X^+$ be the plus construction; then $Y$ is $r$-connected.
As $d\ge 2r+3 \ge 5$, we infer that $Y$ has the homotopy type of a smooth $d$-manifold $Y'$ \cite[\S5]{Hausmann-homology-surgery}.
Consequently, $Y'$ smoothly embeds
in $S^{n}$ by \cite[p.~51] {Haefliger-plongement}.
By Theorem \ref{bigthm:metastable}, $X$ smoothly embeds in $S^{n}$.
\end{proof}

\begin{proof}[Proof of Theorem \ref{bigthm:PL}]  The proof is basically the same as that of Theorem \ref{bigthm:metastable}, with the exception 
that one now cites \cite[cor.~11.3.1]{Wall_book}.
\end{proof}

\section{Embedding spaces} \label{sec:embedding-spaces}

\subsection{Ads}
Let $I_k$ denote the poset whose objects are
the set of non-empty subsets of the ordered set $[k]:= \{0< 1 <\dots < k\}$,  where the ordering is defined
by inclusion. 
A {\it $(k+2)$-ad} of spaces is a functor
\[
X_\bullet\: I_{k+1} \to \Top
\]
such that for every inclusion $S\subset T$, the map $X_S\to X_T$ is a cofibration.
In particular, a $(k+2)$-ad defines a punctured $(k+1)$-cube of spaces.

Note that the inclusion $d_i\: [k-1] \to [k]$ which forgets the element $i$ defines
a $(k+1)$-ad $d_iX_\bullet$, the $i$-the face of $X_\bullet$. 
A $(k+2)$-ad $X_\bullet$ is {\it special} if each of the maps $X_i \to X$ is a weak equivalence.
The value of $X_\bullet$ at singletons are the vertices of $X_\bullet$.
A map $X_\bullet \to Y_\bullet$ of $(k+2)$-ads is a natural transformation of functors.

If $Z$ is a space, then $Z \times \Delta^k$ 
is a special $(k+2)$-ad in an obvious way. We denote it by $Z\times \Delta^k_{^\bullet}$.

\subsection{The Poincar\'e embedding space}
If $X$ is a Poincar\'e complex of dimension $n$, and let $q\ge 3$ be an integer,  then the space of Poincar\'e embeddings 
of $X$ in $S^n$
\[
E^{\pd}(X,S^{n+q})
\]
is the geometric realization of the Kan simplicial set in which a $k$-simplex
is a commutative square of special $(k+2)$-ads
\[
\xymatrix{
E_\bullet \ar[r] \ar[d] & C_\bullet \ar[d] \\
X\times \Delta^k_{^\bullet}\ar[r] & S^{n+q}\times \Delta^k_{^\bullet}
}
\] 
in which the restriction to each vertex has the structure of a Poincar\'e embedding
of $X$ in $S^{n+q}$.

\subsubsection{Discussion} Up to homotopy,  there is another description  of $E^{\pd}(X,S^n)$ which we won't employ.
However, it may be worth sketching the construction here because it emphasizes a relationship between embedding spaces and 
(homotopy) automorphisms in the Poincar\'e category which is reminiscent to what one has in the manifold categories. The main disadvantage of the
construction is that it involves choices.

If we fix a Poincar\'e embedding
\[
\hskip .2in \xymatrix{
E  \ar[r] \ar[d] & C \ar[d] \\
X\ar[r] & S^{n+q},
}
\] 
\vskip -.62in \hskip 1.5in $\scr D\quad =$\vskip.4in
\noindent i.e., a basepoint in $E^{\pd}(X,S^{n+q})$, then there is a homotopy fiber sequence
\[
\Gr_X(\scr D) \to \Gr_{n+q+1} \to E^{\pd}(X,S^{n+q})\, ,
\]
in which the fiber $\, \Gr_X(\scr D)$ is the topological monoid of (derived) homotopy automorphisms  of the  square $\scr D$ which restrict to the identity on $X$.
The map $\Gr_{n+q+1} \to E^{\pd}(X,S^{n+q})$ is defined by the orbit 
\[
\Gr_{n+q+1} \to \Gr_{n+q+1}\cdot \scr D 
\]
under the evident action of $\Gr_{n+q+1}$ on $E^{\pd}(X,S^{n+q}) $. Then $\Gr_X(\scr D)$ is the stabilizer of $\scr D$ and we have
\[
\Gr_{n+q+1}\cdot \scr D  \,\,  \simeq \,\, \Gr_{n+q+1}\!/\Gr_X(\scr D) \, .
\]
To obtain $E^{\pd}(X,S^{n+q})$, we use a non-connective delooping of $\, \Gr_X(\scr D)$:
Choose a representative $\scr D$ in  each concordance class $[\scr D]$ of Poincar\'e embedding. Then one has a fiber sequence
\[
 E^{\pd}(X,S^{n+q}) \to \coprod_{[\scr D]} B\Gr_X(\scr D) \to B\Gr_{n+q+1}\, .
 \]
 Hence, $ E^{\pd}(X,S^{n+q}) $ is  the homotopy fiber of a non-connective delooping of the homomorphism
 $\Gr_X(\scr D) \to \Gr_{n+q+1}$.

\subsection{The block embedding space}
If $M$ is a smooth $n$-manifold, then the space of block embeddings of $M$ in $S^{n+q}$
\[
E^b(M,S^{n+q})
\]
 is the Kan $\Delta$-set
whose $k$-simplices are given by (locally flat) $\cat$ embeddings of $(k+2)$-ads
\[
M\times \Delta^k_{^\bullet} \to S^{n+q} \times \Delta^k_{^\bullet}\, .
\]
By modifying the definition slightly to include a choice of tubular neighborhood, one obtains a map
\begin{equation}\label{eqn:cat-to-pd}
E^b(M,S^{n+q})\to E^{\pd}(M,S^{n+q})\, .
\end{equation}

%\begin{thm}[Wall {\cite[thm.~11.3, cor.~11.4]{Wall_book}}] \label{thm:block-pl}
% When $\cat = \PL$, the map \eqref{eqn:cat-to-pd} is a homotopy equivalence.
% \end{thm}

\subsection{The Browder-Casson-Sullivan-Wall Theorem}
Let $\nu\: M \to BO$ classify the stable normal bundle, and let
 \[
\scr I^b_q(M)
 \] denote the space of 
lifts of the stable normal bundle $\nu$ to an $q$-plane bundle, i.e., 
the space of lifts
\[
\xymatrix{
& B\Or_{q} \ar[d]  \\
M \ar[r]_{\nu} \ar@{..>}[ur] & B\Or
}
\]
of the classifying map of $\nu$ (here we have taken the liberty of converting the map $B\Or_{q} \to B\Or$ to a fibration
when making this definition). Then there is an evident map 
\[
E^b(M,S^{n+q}) \to \scr I^b_q(M)\, .
\]
Similarly, let $\scr I^{\pd}_q(M)$ denote the space of lifts
\[
\xymatrix{
& B\Gr_{q} \ar[d]  \\
M \ar[r]\ar@{..>}[ur] & B\Gr
}
\]
of the Spivak normal fibration. Then one has a commutative square
\begin{equation} \label{eqn:BCSW-square}
\xymatrix{
E^b(M,S^{n+q})\ar[r] \ar[d] & E^{\pd}(M,S^{n+q})\ar[d]\\
 \scr I^b_q(M)\ar[r] & \scr I^{\pd}_q(M)\, .
}
\end{equation}

\begin{thm}[Wall {\cite[thm.11.3 rel.]{Wall_book}}] \label{thm:smooth-BCSW}
Assume $q \ge 3$ and $n+q \ge 5$. 
Then the square \eqref{eqn:BCSW-square} is homotopy Cartesian.
\end{thm}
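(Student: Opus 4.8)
The plan is to reduce the statement to Wall's block-embedding handle exchange theorem \cite[chap.~11]{Wall_book} and then track through the definitions of the various embedding spaces in the $\Delta$-set/simplicial set models introduced above. First I would recall that Wall's theorem, in its relative (rel.~$\partial$ and parametrized) form, asserts precisely that the forgetful-to-surgery-data comparison is a homotopy equivalence in the stated range: given a Poincar\'e embedding of $M$ in $S^{n+q}$ together with a vector-bundle reduction of its normal spherical fibration (i.e., a point of $E^{\pd}(M,S^{n+q})$ together with a compatible point of $\scr I^b_q(M)$ lying over its image in $\scr I^{\pd}_q(M)$), one can realize it by a genuine block embedding, and this realization is unique up to the appropriate notion of concordance; moreover the same holds parametrized over $\Delta^k$. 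This is exactly the content needed to show that \eqref{eqn:BCSW-square} is homotopy Cartesian, since homotopy Cartesianness of that square is equivalent to saying that the map from $E^b(M,S^{n+q})$ to the homotopy pullback of $E^{\pd}(M,S^{n+q}) \to \scr I^{\pd}_q(M) \leftarrow \scr I^b_q(M)$ is a weak equivalence.

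The main steps, in order, are: (1) Identify the homotopy pullback $P := E^{\pd}(M,S^{n+q}) \times^h_{\scr I^{\pd}_q(M)} \scr I^b_q(M)$ as the geometric realization of a Kan simplicial set whose $k$-simplices are Poincar\'e embeddings of the $(k+2)$-ad $M \times \Delta^k_\bullet$ in $S^{n+q}\times\Delta^k_\bullet$ equipped with a $q$-plane bundle lift of the complement's normal data compatible with the induced spherical lift — i.e., unpack what a point of $P$ is simplex-wise, using that all three spaces are realizations of Kan simplicial sets and that $\scr I^b_q(M) \to \scr I^{\pd}_q(M)$ is a fibration (one may take strict models so that $P$ is the actual pullback simplicial set, and this is weakly equivalent to the homotopy pullback because the map to $\scr I^{\pd}_q(M)$ from $\scr I^b_q(M)$ is a fibration). (2) Observe there is a natural map $E^b(M,S^{n+q}) \to P$ refining the square \eqref{eqn:BCSW-square}, obtained by sending a block embedding, together with the choice of tubular neighborhood used to define \eqref{eqn:cat-to-pd}, to its associated Poincar\'e embedding and the vector-bundle structure on the normal tube. (3) Apply Wall's theorem simplex-wise to show this map is a weak equivalence: surjectivity on $\pi_0$ is the existence half of Wall's theorem (every Poincar\'e embedding with a bundle reduction comes from a block embedding), and the isomorphism on all higher homotopy is the relative/parametrized uniqueness half, applied to maps out of $S^j$ and $D^{j+1}$ built from the $\Delta$-set structure; here the hypotheses $q\ge 3$ and $n+q\ge 5$ are exactly Wall's range restrictions (codimension $\ge 3$ so that embeddings are controlled by Poincar\'e data, and ambient dimension $\ge 5$ so that the surgery/handle arguments run).

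The hard part will be step (3): matching the bookkeeping of Wall's theorem — which is stated for a single embedding and its uniqueness up to concordance — with the precise $\Delta$-set/simplicial-set models of $E^b$ and $E^{\pd}$ used here, so that one genuinely gets a weak equivalence of realizations rather than just a bijection on concordance classes and on ``mapping class'' groups. Concretely, one must check that the block-concordance relation implicit in the $(k+2)$-ad definition of $E^b(M,S^{n+q})$ corresponds under the comparison map to the concordance relation on Poincar\'e embeddings with bundle data, and that Wall's rel.~$\partial$ statement supplies exactly the fillers needed to verify the lifting criterion for a weak equivalence of Kan complexes (lifts of $\partial\Delta^k \hookrightarrow \Delta^k$). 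The subtlety about simple versus non-simple Poincar\'e embeddings flagged in the earlier remark is relevant here: because $q \ge 3$ implies $n+q \ge n+3$, the additivity-of-torsion remark shows Wall's simple theory and the non-simple notion used in $E^{\pd}$ agree, so no Whitehead-torsion obstruction intervenes. Once these identifications are in place the theorem follows immediately, and in fact one can phrase the whole argument as: the comparison map $E^b \to P$ is, level-wise in the $\Delta$-set direction, the map whose fibers over $\scr I^b_q(M)$ are the Sullivan--Wall homotopy-theoretic block structure spaces, which Wall identifies with the corresponding Poincar\'e (= geometric, in codimension $\ge 3$) embedding spaces.
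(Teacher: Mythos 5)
Your proposal is essentially the paper's own approach: the paper offers no independent argument for this statement, attributing it directly to Wall's theorem 11.3 in its relative form, which is precisely the blockwise/simplex-wise application of Wall's existence-and-uniqueness (rel boundary) result that you outline. Your unpacking of the Kan $\Delta$-set models, the identification of the homotopy pullback, and the matching of the hypotheses $q \ge 3$ (codimension $\ge 3$) and $n+q \ge 5$ (ambient dimension for the surgery arguments) is a correct elaboration of what that citation is meant to supply.
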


\section{The homology sphere case} \label{sec:homology-sphere-embedding} 
We will now focus on the homotopy Cartesian square \eqref{eqn:BCSW-square}
in the case of a homology sphere. Let $\Sigma$ be a smooth homology $n$-sphere.

A Poincar\'e embedding
\[
\xymatrix{
E \ar[r] \ar[d] & C\ar[d]\\
\Sigma \ar[r] & S^{n+q}
}
\]
has the property that the map $E \to \Sigma \times C$ is a weak equivalence and  defines a homotopy trivialization
of the normal data. In particular, $C$ is weakly equivalent to $S^{q-1}$.
and therefore we also obtain a preferred identification of the join $\Sigma \ast C$ with $S^{n+q}$.
It follows that $C$ represents a point in the homotopy fiber of the map 
$\Sigma{\ast}{-} \: B\Gr_q \to B\Gr_{n+q+1}$
given by $C\mapsto \Sigma \ast C$. On the other hand, the plus construction on $\Sigma$ results in an
an acyclic map 
$\Sigma \to S^n$ and yields a homotopy between the maps $\Sigma {\ast} {-}$ and $S^n{\ast}{-}$. In follows that
we may regard $C$ as a point in the fiber of the map $S^n {\ast} {-}$, i.e., of $\Gr_{n+q+1}\!/\Gr_q$. 
We will show that this recipe defines a homotopy equivalence.

\begin{prop} \label{prop:pd-equiv} 
There is a homotopy equivalence
\[
E^{\pd}(\Sigma,S^{n+q}) \,\, \simeq\,\,  \Gr_{n+q+1}\!/\Gr_q\, .
\]
\end{prop}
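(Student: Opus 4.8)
The plan is to exhibit a zig-zag of weak equivalences between $E^{\pd}(\Sigma,S^{n+q})$ and the over-category model for $\Gr_{n+q+1}\!/\Gr_q$ described just before the statement, using the two structural facts about a Poincaré embedding of a homology sphere: (a) the normal data $E\to \Sigma$ is automatically trivial, so $E\simeq \Sigma\times C$ with $C\simeq S^{q-1}$, and (b) the complement $C$ together with the induced equivalence $\Sigma\ast C\simeq S^{n+q}$ is, via the plus construction $\Sigma\to S^n$, a point of the homotopy fiber of $S^n\ast{-}\colon B\Gr_q\to B\Gr_{n+q+1}$. First I would set up, for each $k$, a map from the set of $k$-simplices of $E^{\pd}(\Sigma,S^{n+q})$ to the set of $k$-simplices of the Kan $\Delta$-set modeling $\Gr_{n+q+1}\!/\Gr_q$ (i.e. families over $\Delta^k$ of pairs $(C,h)$ with $h\colon S^{n-1}\ast C\xrightarrow{\sim} S^{n+q-1}$), by sending a square of ads to its complement ad $C_\bullet$ equipped with the collapse identification; I would check this is compatible with faces and degeneracies, so it is a map of simplicial sets, hence of their realizations.

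The key steps, in order: (1) Given a Poincaré embedding of $\Sigma$, show $E\to\Sigma$ is fiber-homotopy trivial. This follows because $E\to\Sigma$ is an $(q-1)$-spherical fibration classified by a map $\Sigma\to B\Gr_q$; since $\Sigma\to S^n$ is acyclic and $B\Gr_q$ has abelian (indeed trivial up to the relevant range) higher homotopy obstructing the lift — more directly, by Corollary~\ref{cor:induced} the Spivak fibration of $\Sigma$ is the pullback of that of $S^n$ along the plus-construction map, and the Spivak fibration of $S^n$ is trivial, so the stable normal fibration of $\Sigma$ is trivial; combined with the homology-sphere hypothesis this forces $\xi$ itself to be trivial. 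Fix such a trivialization (functorially in the ad direction, using Lemma~\ref{lem:acyclic-characterization} / Corollary~\ref{cor:acyclic-characterization} to see the space of trivializations is contractible). (2) Having trivialized, the homotopy coCartesian square becomes the statement that $\Sigma\ast C\to S^{n+q}$ is an equivalence, i.e. the data is precisely a point of the homotopy fiber of $\Sigma\ast{-}$; via the chosen homotopy between $\Sigma\ast{-}$ and $S^n\ast{-}$ (from the plus construction) this is a point of $\Gr_{n+q+1}\!/\Gr_q$ in its over-category model. (3) Conversely, given $(C,h)$ with $S^n\ast C\simeq S^{n+q-1}$, form $E:=\Sigma\times C$, the square with $X=\Sigma$, $S^n=S^{n+q}$ and complement $C$; one must check condition (ii) of Definition~\ref{defn:PD-embedding}, i.e. the collapse $H_{n+q}(S^{n+q})\to H_{n+q}(\Sigma^\xi)$ is an isomorphism — this is where $\Sigma\to S^n$ being acyclic enters (so $\Sigma^\xi\to (S^n)^\xi=S^{n+q}$ is acyclic). (4) Assemble (2) and (3) into mutually inverse maps of Kan $\Delta$-sets up to the contractible choice of trivialization in step (1), and conclude the realizations are weakly equivalent; since both are CW, weak equivalence gives homotopy equivalence.

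The main obstacle I expect is making step (1) — the trivialization of the normal data — \emph{functorial over the ad/simplicial direction} rather than just pointwise, so that it assembles into an actual map of $\Delta$-sets rather than merely a bijection on homotopy classes. The clean way to handle this is to recall that the space of fiber-homotopy trivializations of $\xi$ (equivalently, the space of nullhomotopies of $\Sigma\to B\Gr_q$, which by the acyclic-lifting Lemma~\ref{lem:acyclic-characterization} is contractible because it is the space of lifts through $B\Gr_q\to B\Gr_{n+q+1}$ of the nullhomotopic composite, using that $\Sigma\to S^n$ is acyclic and the relevant $\pi_1$'s vanish), and then pass to a model where this contractible space of choices is carried along: concretely, replace $E^{\pd}(\Sigma,S^{n+q})$ by the equivalent $\Delta$-set of Poincaré embeddings \emph{equipped with a trivialization of the normal data} — the forgetful map is a weak equivalence since fibers are contractible — and define the comparison on that model, where it becomes strictly natural. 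Everything else (the Mayer–Vietoris / Hurewicz / relative-Hurewicz bookkeeping for condition (ii), and the identification of the target over-category with $\Gr_{n+q+1}\!/\Gr_q$) is routine given the results already assembled in Sections~\ref{sec:prelim}--\ref{sec:induced}.
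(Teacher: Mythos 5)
Your overall strategy---trivialize the normal data and identify the residual complement data with a point of $\Gr_{n+q+1}\!/\Gr_q$, then make the comparison simplicial---is the same as the paper's, but the two points you yourself flag as delicate are exactly where the argument has genuine gaps. First, in step (1) you pass from triviality of the stable Spivak fibration to triviality of the unstable normal data $\xi$ ``combined with the homology-sphere hypothesis''; stable fiber-homotopy triviality of a $(q-1)$-spherical fibration does not destabilize for free (destabilization is precisely what $\ell_q(\eta)$ measures elsewhere in the paper), and no argument is offered. Second, and more seriously, your fix for functoriality rests on the claim that the space of fiber-homotopy trivializations of $\xi$ (equivalently, of nullhomotopies of $\Sigma\to B\Gr_q$) is contractible, citing Lemma \ref{lem:acyclic-characterization}. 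That lemma gives contractibility of the space of \emph{relative} lifts along the acyclic map $\Sigma\to S^n$, with a $\pi_1$-hypothesis on the fibration; it says nothing about an absolute space of nullhomotopies over $\Sigma$, which, when nonempty, is a torsor over $\map(\Sigma,\Gr_q)$ and is far from contractible (already for $\Sigma=S^n$ its set of components receives $\pi_n(\Gr_q)$). Consequently your decorated $\Delta$-set of embeddings-with-chosen-trivialization is not weakly equivalent to $E^{\pd}(\Sigma,S^{n+q})$, and the comparison map does not descend. (A smaller slip: the fiber of $S^n\ast{-}$ consists of pairs $(C,h)$ with $h\: S^n\ast C\simeq S^{n+q}$, not $S^{n-1}\ast C\simeq S^{n+q-1}$.)

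The paper sidesteps both problems by observing that for a homology sphere the trivialization is canonical rather than chosen: for any Poincar\'e embedding the map $E\to\Sigma\times C$, built from the projection $\xi$ and the structure map to the complement, is a weak equivalence with $C\simeq S^{q-1}$. This trivializes $\xi$ unstably with no appeal to the stable Spivak fibration, and, being part of the embedding data, it is automatically natural in the ad direction, so no contractible-choice argument is needed. Strictness is then handled with the thin join: one introduces the simplicial set $\scr G_{n+q+1,q}$ of pairs $(C_\bullet,h_\bullet)$ with $h_\bullet\:(\Sigma\times\Delta^k_{{}^\bullet})\,\hat\ast\, C_\bullet\to S^{n+q}\times\Delta^k_{{}^\bullet}$ an equivalence of ads, defines explicit simplicial maps $u\:(E_\bullet\to C_\bullet)\mapsto(C_\bullet,h_\bullet)$ and $v\:(C_\bullet,h_\bullet)\mapsto(\Sigma\times C_\bullet\to c(\Sigma)\times C_\bullet)$, and exhibits explicit concordances (the zig-zag through $\Sigma\times C\to C$) showing $u$ and $v$ are mutually inverse up to homotopy; only the identification $\scr G_{n+q+1,q}\simeq\Gr_{n+q+1}\!/\Gr_q$, where the homotopy between $\Sigma\ast{-}$ and $S^n\ast{-}$ coming from the plus construction enters, is left as an exercise. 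To repair your write-up you would need either this canonical trivialization or some other device playing its role; the contractibility claim as stated is false and cannot be salvaged by Lemma \ref{lem:acyclic-characterization}.
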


\begin{proof}  The proof will require some preparation. By slight abuse of notation, in this proof only,
we let $E^{\pd}(\Sigma,S^{n+q})$ denote the simplicial set (rather than the space) used in defining
the Poincar\'e embedding space.
It will be also be convenient to introduce following somewhat imprecise notation:
Given a Poincar\'e embedding
\[
\xymatrix{
E\ar[r] \ar[d] & C\ar[d] \\
\Sigma \ar[r] & S^{n+q},
}
\]
we regard the vertical maps as understood, and denote the data by $(E\to C)$.
Let $c(\Sigma)$ denote the cone on $\Sigma$. There there is an evident zig-zag of equivalences
\begin{equation}\label{eqn:chain}
(E\to C) @>\sim>> (\Sigma \times C @> p_1 >> C) @<\sim << (\Sigma \times C @>\subset >> c(\Sigma) \times C)
\end{equation}
which defines a pair of 1-simplices in $E^{\pd}(\Sigma,S^{n+q})$, i.e., a pair of concordances.

If $A$ and $B$ are spaces, define the {\it thin join} $A\hat\ast B$ by
\[
A \cup_{A\times B} (c(A) \times B)\, .
\]
Then the evident map $A\ast B \to A \hat\ast B$ is a weak equivalence.

Define a simplicial set \[
\scr G_{n+q+1,q}
\] whose $k$-simplices are pairs $(C_\bullet,h_\bullet)$ 
in which $C_\bullet$ is a special $(k+2)$-ad of spaces whose vertices are weakly equivalent to $S^{q-1}$, and 
\[
h_\bullet\: (\Sigma {\times} \Delta^k_{^\bullet} ) \hat\ast C_\bullet \to S^{n+q}{\times} \Delta^k_{^\bullet}
\]
is a weak equivalence of $(k+2)$-ads. Note that $h_\bullet$  determines a map of $(k+2)$-ads
$ \Sigma {\times} \Delta^k_{^\bullet} \to S^{n+q}  {\times} \Delta^k_{^\bullet} $ (this would not have been the case
had we instead used the join).
We leave it as an elementary exercise to show  that there is a homotopy equivalence on realizations
\[
\scr G_{n+q+1,q}\,  \simeq \, \Gr_{n+q+1}\!/ \Gr_q\, .
\]

We define simplicial map $u\: E^{\pd}(\Sigma,S^{n+q}) \to \scr G_{n+q+1,q}$ by
\[
(E_\bullet \to C_\bullet) \mapsto (C_\bullet,h_\bullet)
\]
in which $h_\bullet$  is the equivalence induced by the $(k+2)$-ad of Poincar\'e embeddings $(E_\bullet \to C_\bullet)$.

Define a simplicial map $v\: \scr G_{n+q+1,q} \to E^{\pd}(\Sigma,S^{n+q})$ by
\[
 (C_\bullet,h_\bullet) \mapsto (\Sigma \times C_\bullet \to c(\Sigma) \times C_\bullet)\, .
 \]
 Then the chain \eqref{eqn:chain} shows that the compositions $u\circ v$ and $v\circ u$ are homtopic to the identity in a preferred way.
 \end{proof}

\begin{rem} Using the construction in \S\ref{sec:induced}, the acyclic map $\Sigma \to S^n$  induces a map $E^{\pd}(S^n,S^{n+q}) \to E^{\pd}(\Sigma,S^{n+q})$ which 
is also a homotopy equivalence.
\end{rem}

%\subsubsection{The $\PL$ case} If $\Sigma$ is a $\PL$ homology $n$-sphere, then Theorem \ref{thm:block-pl}
% and  Lemma \ref{lem:pd-equiv}  show that there is a homotopy equivalence
% \[
% \tilde E^{\pl}(\Sigma,S^{n+q})  \,\, \simeq \,\, \Gr_{n+q+1}\!/\Gr_{q}\, .
% \]

\begin{proof}[Proof of Theorem \ref{bigthm:fiber-sequence}] 
With respect to the identification of Proposition \ref{prop:pd-equiv}, the right vertical map 
of \eqref{eqn:BCSW-square} is induced by taking complement data $C$ to the constant lift
\[
\Sigma @>>> B\Gr_q \, ,
\]
having value $C$.

As the square of  \eqref{eqn:BCSW-square}, is homotopy Cartesian, it is enough to show that homotopy fiber of the map
\[
 \scr I^b_q(\Sigma)\to  \scr I^{\pd}_q(\Sigma)
 \]
(at the given basepoint) is homotopy equivalent to  $\ell_q(\eta)$.

The smooth structure on $\Sigma$ gives rise
to the factorization $\Sigma \to B\Or\to B\Gr$  and defines a lift of the (trivial) Spivak fibration to the stable normal bundle. 
Although the stable normal bundle in this instance is trivializable, the trivialization depends on the smooth structure. The lift is encoded
by the  {\it stable normal invariant} $\eta\:\Sigma \to \Gr\!/\Or$ associated with the difference between two null homotopies of the composition
$\Sigma \to B\Gr \to B(\, \Gr\!/\Or)$, where  one null-homotopy is defined by the trivialization of the Spivak fibration and the other
defined by the Kervaire's trivialization of the stable normal bundle
 \cite[p.~109]{Wall_book}.

  By Lemma \ref{lem:two-descriptions-of-lifts}, the homotopy fiber of the map
\[
 \scr I^b_q(\Sigma)\to  \scr I^{\pd}_q(\Sigma)
 \]
 at the basepoint is identified with
 the space of lifts
 \[
 \xymatrix{
 & \Gr_q\!/\Or_q \ar[d]\\
 \Sigma \ar@{..>}[ur] \ar[r]_(.4){\eta}  & \Gr\!/\Or\, ,
 }
 \]
where again, we have taken the liberty here of converting $\, \Gr_q\!/\Or_q \to \Gr\!/\Or$ to a fibration.
But this is precisely the definition of $\ell_q(\eta)$.
\end{proof}

%\begin{prop} \label{prop:fiber-sequence} There is a homotopy fiber sequence 
% \begin{equation} \label{eqn:fiber-sequence}
 % \ell_q(\eta) \to E^b(\Sigma,S^{n+q}) \to  \Gr_{n+q+1}\! /\Gr_q\, .
% \end{equation}
% In particular, $E^b(\Sigma,S^{n+q})$ is non-empty if and only if $\ell_q(\eta)$ is non-empty.
%\end{prop}

Let
\[
\mathfrak F_q := \text{fiber}(\Gr_q\!/\Or_q \to \Gr\!/\Or)
\] 
be the homotopy fiber of the map $\Gr_q\!/\Or_q \to \Gr\!/\Or$.

\begin{lem} \label{lem:fibFq} If $\ell_q(\eta) \ne \emptyset$, then there is a homotopy fiber sequence
\begin{equation} \label{eqn:fibFq}
\Omega^n \mathfrak F_q \to \ell_q(\eta) \to \mathfrak F_q\, .
\end{equation}
\end{lem}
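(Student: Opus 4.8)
The plan is to analyze the space of lifts $\ell_q(\eta)$ by decomposing the lifting problem along the two-stage factorization of the map $\Gr_q\!/\Or_q \to \Gr\!/\Or$ through its homotopy fiber $\mathfrak F_q$. Recall that $\mathfrak F_q = \mathrm{fiber}(\Gr_q\!/\Or_q \to \Gr\!/\Or)$, so by construction there is a fibration sequence
\[
\mathfrak F_q \to \Gr_q\!/\Or_q \to \Gr\!/\Or\, .
\]
The stable normal invariant $\eta\: \Sigma \to \Gr\!/\Or$ is the base of the lifting problem defining $\ell_q(\eta)$; the point is that a lift of $\eta$ through $\Gr_q\!/\Or_q$ is exactly a section over $\Sigma$ of the pulled-back fibration $\eta^\ast(\Gr_q\!/\Or_q) \to \Sigma$, whose fiber is $\mathfrak F_q$. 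So $\ell_q(\eta) = \secs(\eta^\ast(\Gr_q\!/\Or_q) \to \Sigma)$, the section space of a fibration over $\Sigma$ with fiber $\mathfrak F_q$.

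First I would set up the standard cofiber/attaching analysis of a section space over a homology $n$-sphere. Since $\Sigma$ is a homology $n$-sphere, the plus construction gives an acyclic map $\Sigma \to S^n$, and $\mathfrak F_q$ is simply connected in the relevant range (it is the fiber of a map of simply connected spaces $\Gr_q\!/\Or_q \to \Gr\!/\Or$), so obstruction theory for sections over $\Sigma$ only sees the homology of $\Sigma$, which is that of $S^n$. Concretely, $\Sigma$ has a CW structure with one $0$-cell, some cells in a middle range (all killed homologically), and one $n$-cell; equivalently, after the plus construction the section space over $\Sigma$ agrees with the section space over $S^n$ up to the relevant connectivity — but more cleanly, one can argue directly. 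Restricting a section to the basepoint gives a fibration
\[
\secs\big(\eta^\ast(\Gr_q\!/\Or_q)\to \Sigma\big) \to \mathfrak F_q
\]
(evaluation at the basepoint $\ast \in \Sigma$, using that $\eta$ is based so the fiber over $\ast$ is canonically $\mathfrak F_q$), and the fiber of this evaluation map is the space of sections that are trivial near the basepoint, i.e., the space of based maps $\Sigma/\ast \to \mathfrak F_q$ twisted by the monodromy. Since the normal invariant $\eta$ is null-homotopic when composed into $\Gr\!/\Or$ in a way compatible with Kervaire's framing — and since $\mathfrak F_q$ is simply connected — the monodromy is trivial, so this fiber is $\map_\ast(\Sigma/\ast, \mathfrak F_q)$. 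Now $\Sigma/\ast \simeq \Sigma$ is a homology $n$-sphere with trivial reduced homology except $\tilde H_n = \Z$; because $\mathfrak F_q$ is simply connected, $\map_\ast(\Sigma, \mathfrak F_q) \simeq \map_\ast(S^n, \mathfrak F_q) = \Omega^n \mathfrak F_q$ — this equivalence is exactly the statement that a simply connected target cannot distinguish $\Sigma$ from $S^n$, which one proves by comparing Postnikov/obstruction towers, or simply by noting $\Sigma \to S^n$ is acyclic and $\mathfrak F_q$ is simply connected nilpotent so $\map_\ast(-,\mathfrak F_q)$ inverts it (an application of Lemma~\ref{lem:acyclic-characterization}-type reasoning, or directly of the fact that acyclic maps are $\mathbb{Z}$-homology equivalences and the target is simply connected). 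Assembling these pieces yields the fiber sequence
\[
\Omega^n \mathfrak F_q \to \ell_q(\eta) \to \mathfrak F_q\, ,
\]
as claimed.

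The main obstacle I anticipate is justifying the trivialization of the monodromy and, relatedly, pinning down that the evaluation-at-basepoint fibration splits (or at least that its fiber is the honest based mapping space $\Omega^n\mathfrak F_q$ with no twist). This hinges on the precise definition of $\eta$: it is the stable normal invariant built from the \emph{difference} of Kervaire's trivialization of the stable normal bundle and the trivialization of the (already trivial) Spivak fibration, as recalled in the proof of Theorem~\ref{bigthm:fiber-sequence}. One must check that this difference map, while not itself null, nonetheless pulls back $\Gr_q\!/\Or_q \to \Gr\!/\Or$ to a fibration over $\Sigma$ with \emph{untwisted} fiber $\mathfrak F_q$ — which follows because $\Sigma$ is simply connected (it is a homology sphere of dimension $n \ge 2$, hence $\pi_1 = 0$), so there is no monodromy at all and every fibration over $\Sigma$ is ``orientable.'' Once that is observed, the rest is formal: the section space of a fibration over a simply connected $\Sigma$ with simply connected fiber $\mathfrak F_q$ sits in the evaluation fibration whose fiber is $\map_\ast(\Sigma,\mathfrak F_q) \simeq \Omega^n\mathfrak F_q$. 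So the genuinely delicate point is the simple-connectivity bookkeeping and the identification of $\ell_q(\eta)$ with a section space in the first place; the homological replacement of $\Sigma$ by $S^n$ is then routine given that $\mathfrak F_q$ is simply connected.
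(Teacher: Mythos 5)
Your overall strategy (reduce to a based lifting/section problem over the homology sphere, evaluate at the basepoint, and identify the fiber with $\Omega^n\mathfrak F_q$ using acyclicity of $\Sigma\to S^n$) parallels the paper, but there is a genuine gap at the key step: the identification of the fiber of the evaluation map with the \emph{untwisted} based mapping space $\map_\ast(\Sigma,\mathfrak F_q)$. Simple connectivity of $\Sigma$ only kills $\pi_1$-monodromy; it does not make the pulled-back fibration $\eta^\ast(\Gr_q\!/\Or_q)\to\Sigma$ fiber-homotopy trivial, since fibrations over a simply connected base are classified by (higher) homotopy of $B\mathrm{hAut}(\mathfrak F_q)$, not by $\pi_1$ of the base. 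Indeed the remark immediately after the lemma in the paper stresses that $\eta$ is not null-homotopic in general and the resulting fibration is not trivializable, so your argument, which in effect assumes the pullback is trivial, proves the lemma only in the case $\eta\simeq\ast$ (e.g.\ $\Sigma=S^n$). Concretely: after the acyclicity reduction the space of lifts rel basepoint is the fiber of $\Omega^n(\Gr_q\!/\Or_q)\to\Omega^n(\Gr\!/\Or)$ over the point $\eta'\in\Omega^n(\Gr\!/\Or)$, which may lie in a different path component from the constant loop, and fibers of a fibration over different components need not be equivalent. (A toy example of the danger: for the Hopf fibration $S^3\to S^2$ with fiber $S^1$, the ``untwisted'' prediction $\Omega^2S^1\to\secs\to S^1$ would force the section space to be nonempty, yet it is empty.)

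The missing ingredient is exactly the mechanism the paper uses, and it is also where the hypothesis $\ell_q(\eta)\ne\emptyset$ enters — your write-up never uses that hypothesis, which is a warning sign. One first factors $\eta$ through the acyclic map $\Sigma\to S^n$ (Hausmann--Husemoller, prop.~3.1) to get $\ell_q(\eta)\simeq\ell_q(\eta')$ with $\eta'\:S^n\to\Gr\!/\Or$ based; evaluation at the basepoint then exhibits $\ell_q(\eta')\to\mathfrak F_q$ with fiber the fiber of $\Omega^n(\Gr_q\!/\Or_q)\to\Omega^n(\Gr\!/\Or)$ at $\eta'$. Since this is an $n$-fold loop map of grouplike H-spaces and the fiber at $\eta'$ is nonempty by hypothesis, translation by a chosen lift carries it homotopy equivalently onto the fiber at the constant loop, which is $\Omega^n\mathfrak F_q$. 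Your proposal would be correct if you replaced the ``no monodromy because $\Sigma$ is simply connected'' step by this translation argument. A minor additional point: $\mathfrak F_q$ is simply connected not because it is ``the fiber of a map of simply connected spaces'' (that inference is invalid — consider $\Omega S^2$), but because $\Gr_q\!/\Or_q\to\Gr\!/\Or$ is $(2q-3)$-connected, so $\mathfrak F_q$ is $(2q-4)$-connected.
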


\begin{rem} The fibration is trivializable when $\eta$ is null homotopic. However, it is not trivializable in general.
\end{rem}

\begin{proof}[Proof of Lemma \ref{lem:fibFq}] By \cite[prop~3.1]{Hausmann-Husemoller}, $\eta$ factors as 
\[
\Sigma \to S^n @> \eta' >> \Gr\!/\Or
\]
and induces a homotopy equivalence $\ell_q(\eta) \simeq \ell_q(\eta')$. We may assume that $\eta'$ is a basepoint  preserving map.
Then evaluation at the basepoint of $S^n$ defines a fibration
\[
\ell_q(\eta';\ast) \to \ell_q(\eta') \to \mathfrak F_q
\]
where  $\ell_q(\eta';\ast)$ is the fiber at $\eta'$ of 
\[
\Omega^n (\Gr_q\!/\Or_q) \to \Omega^n (\Gr\!/\Or) \, .
 \]
 Since the latter is an $n$-fold loop map, its fiber at $\eta'$ and its fiber at the constant $n$-fold loop
 are homotopy equivalent. It follows that $\ell_q(\eta';\ast) \simeq \Omega^n \mathfrak F_q$.
\end{proof}

% By \cite[prop~3.1]{Hausmann-Husemoller}, each such space of maps out of $\Sigma$ may
 %be viewed up to homotopy as the corresponding space out of $\Sigma^+ \simeq S^n$. Consequently,
%there are homotopy equivalences
%\[
%\scr I^b_q(\Sigma) \simeq \map(S^n,\Or\!/\Or_{q}) \quad \text{and} \quad \scr I^{\pd}_q(\Sigma) \simeq \map(S^n,\Gr\!/\Gr_{q}) 
% \]
% where it is implicit that the left equivalence  depends on the particular trivialization of the stable normal bundle.

%Consequently, Theorem \ref{thm:smooth-BCSW} implies that that there is a homotopy Cartesian square
%\begin{equation} \label{eqn:final-square}
%\xymatrix{
% E^b(\Sigma,S^{n+q})\ar[r] \ar[d] & \Gr_{n+q+1}\!/\Gr_{q} \ar[d]\\
%\map(S^n,\Or\!/\Or_{q}) \ar[r] & \map(S^n,\Gr\!/\Gr_{q})
%}
%\end{equation}
%The right vertical map of this square is the composition
%\[
%\Gr_{n+q+1}\!/\Gr_q @>>> \Gr\!/\Gr_{q} @>>> \map(S^n,\Gr\!/\Gr_{q})
%\]
%induced by the inclusion the inclusion of the constant maps.  

\begin{proof}[Proof of Corollary \ref{bigcor:cor1}] As $\, \Gr_{n+q+1}\!/\Gr_q$ is connected, by Theorem 
 \ref{bigthm:fiber-sequence} it is enough to show that  $\ell_q(\eta)$  is $(2q-n-4)$-connected.
Note that the  map $\, \Gr_q\!/\Or_q \to \Gr\!/\Or$ is $(2q-3)$-connected (cf.~\cite{Haefliger-Diff-embeddings}, \cite[p.~124]{Wall_book}). 
In particular, $\ell_q(\eta) \ne \emptyset $ if $2q \ge n-3$.  
Using Lemma \ref{lem:fibFq}, it follows that   $\ell_q(\eta)$  is $(2q-n-4)$ connected. \end{proof}

\begin{proof}[Proof of Theorem \ref{bigthm:homotopy-grps}] Observe that $\Gr_{n+q+1}/\Gr_{q}$
 is $(q-1)$-connected. 
 Then using the long exact homotopy sequence
\eqref{eqn:fiber-sequence}  we deduce that
 \[
 \pi_j(E^b(\Sigma,S^{n+q})) \cong \pi_j(\ell_q(\eta))\, 
 \]
 for $j\le q-2$.
 Since $\mathfrak F_q$ is $(2q-4)$-connected,
 the fiber sequence \eqref{eqn:fibFq} implies that
 \[
 \pi_j(\ell_q(\eta)) \cong \pi_{k+n}(\mathfrak F_q)
 \]
 for $j \le 2q-5$. The result follows since $q-2 \le 2q-5$ is equivalent to $q\ge 3$.
\end{proof}

%\begin{proof}[Proof of Addendum \ref{bigadd:homology-grps}]
% This follows from the Hurewicz theorem and the fact that we showed that each of the maps
%\[
%\Omega^n \mathfrak F_q @>>> E^b(\Sigma,S^{n+q}) @<<< E^b(\Sigma,S^{n+q})
%\]
% induces an isomorphism on homotopy in degrees $\le q-3$.
%\end{proof}

\section{Proof of Theorem \ref{bigthm:computation}} \label{sec:computation}

The proof of Theorem \ref{bigthm:computation} will depend on several lemmas.

\begin{lem}  \label{lem:relativeG} If $q$ is even and $n$ is odd, then
\[
\pi_j(\Gr_{n+q+1}\! /\Gr_q) \otimes \Bbb Q \cong
\begin{cases}
\Bbb Q & \text{ \rm if } j = q, n+q,\\
0 & \text{\rm otherwise.}  
\end{cases}
\]
\end{lem}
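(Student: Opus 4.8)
The plan is to use the fibration $\Gr_{n+q+1}/\Gr_q \to B\Gr_q \to B\Gr_{n+q+1}$ together with the long exact sequence in rational homotopy, and to reduce everything to the rational homotopy of the classifying spaces $B\Gr_m$. First I would recall that, since $\Gr\to \Gr/\Or \to B\Or \to B\Gr$ is a fibration with $B\Or\to B\Gr$ rationally trivial after one stabilization (indeed $p$ is $2$-connected and $\Gr/\Or$ has finite homotopy groups through a range), the stable statement $\pi_j(B\Gr)\otimes\Bbb Q \cong \pi_j(B\Or)\otimes\Bbb Q$ holds, so $\pi_j(B\Gr)\otimes\Bbb Q$ is $\Bbb Q$ for $j\equiv 0 \bmod 4$ and $0$ otherwise (via Pontryagin classes). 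For the \emph{unstable} groups $\pi_j(B\Gr_m) = \pi_{j-1}(\Gr_m) = \pi_{j-1}(\SGr_m)\oplus(\text{2-torsion})$ rationally, and $\SGr_m \simeq \Omega^{m-1}_1 S^{m-1}$ (the basepoint component of self-maps of the sphere), whose rational homotopy is governed by the EHP/James picture: for $m-1$ odd, $\Omega^{m-1}_1 S^{m-1}$ is rationally a point, while for $m-1$ even it is rationally $S^{m-2}$ (equivalently the rational homotopy of $S^{m-1}$ is concentrated in degrees $m-1$ and $2m-3$, and loops shift this down by $m-1$, leaving $\pi_{m-2}$). So I would split into the two cases according to the parities of $n+q+1$ and $q$.

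With $q$ even and $n$ odd, we have $q$ even so $\Gr_q \simeq_{\Bbb Q} \ast$ in positive degrees coming from $\SGr_q$ — wait, more carefully: $q$ even means $q-1$ odd, so $S^{q-1}$ is rationally odd, $\Omega^{q-1}S^{q-1}\simeq_{\Bbb Q}\ast$, hence $\pi_j(B\Gr_q)\otimes\Bbb Q$ vanishes for $j>0$ except possibly for the stable Pontryagin contributions that have not yet entered the unstable range; but in fact $B\Gr_q$ is only rationally nontrivial in degree $q$ (coming from $\pi_{q-1}(S^{q-1})=\Bbb Z$, i.e. the Euler-class-type generator). Meanwhile $n+q+1$ is even, so $q-1$... let me just say: $n+q+1$ even means the top sphere $S^{n+q}$ is rationally odd, so $B\Gr_{n+q+1}$ is rationally nontrivial only in degree $n+q+1$. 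Then the long exact sequence $\cdots\to \pi_{j+1}(B\Gr_{n+q+1}) \to \pi_j(\Gr_{n+q+1}/\Gr_q) \to \pi_j(B\Gr_q) \to \pi_j(B\Gr_{n+q+1})\to\cdots$ rationally reads: $\pi_j(B\Gr_q)\otimes\Bbb Q = \Bbb Q$ for $j=q$ and $0$ else; $\pi_j(B\Gr_{n+q+1})\otimes\Bbb Q = \Bbb Q$ for $j=n+q+1$ and $0$ else. Since $q < n+q+1$, these degrees are distinct, the connecting maps are forced to vanish for degree reasons, and the sequence gives $\pi_q(\Gr_{n+q+1}/\Gr_q)\otimes\Bbb Q \cong \pi_q(B\Gr_q)\otimes\Bbb Q = \Bbb Q$, $\pi_{n+q}(\Gr_{n+q+1}/\Gr_q)\otimes\Bbb Q \cong \pi_{n+q+1}(B\Gr_{n+q+1})\otimes\Bbb Q = \Bbb Q$, and $0$ in all other degrees.

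The main obstacle is pinning down the rational homotopy of $B\Gr_m$ in the \emph{unstable} range precisely — i.e. justifying that the only rational contributions are the Euler-class generator in degree $m$ (when $m$ is odd, equivalently $S^{m-1}$ rationally odd) or generators in degrees $m-1$ and $2m-3$-type shifts (when $m$ even) — and checking that for the specific pair $(q,\, n+q+1)$ with $q$ even, $n$ odd, the constraint $q\ge 2n+2$ guarantees the relevant degrees don't collide and no stable Pontryagin class has yet "turned on" below the degrees in question, or if it has, that it is accounted for. I would handle this by invoking the rational equivalence $\SGr_m \simeq_{\Bbb Q} \Omega^{m-1}_1 S^{m-1}$ and Serre's computation of $\pi_*(S^{m-1})\otimes\Bbb Q$, together with the fibration $\SGr_m \to \Gr_m \to \Gr/\SGr \simeq K(\Bbb Z/2,\ast)$ which is rationally invisible, so $\pi_*(\Gr_m)\otimes\Bbb Q \cong \pi_*(\SGr_m)\otimes\Bbb Q$; the hypothesis $q\ge 2n+2$ is exactly what is needed to separate degree $q$ from degree $2q-3$-type unstable classes and from $n+q$, so that the answer collapses to the clean two-line form claimed.
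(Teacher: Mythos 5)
Your overall strategy is the same as the paper's: both arguments run the long exact rational homotopy sequence of the fibration sequence $\Gr_{n+q+1}\to \Gr_{n+q+1}/\Gr_q\to B\Gr_q\to B\Gr_{n+q+1}$ (you use the three-term segment ending in $B\Gr_{n+q+1}$, the paper the segment beginning with $\SGr_{n+q+1}$ --- it is the same long exact sequence), after identifying the rational homotopy of the unstable classifying spaces. The two inputs you feed into the sequence, namely that $\pi_\ast(B\Gr_q)\otimes\Bbb Q$ is a single $\Bbb Q$ in degree $q$ and $\pi_\ast(B\Gr_{n+q+1})\otimes\Bbb Q$ a single $\Bbb Q$ in degree $n+q+1$, are correct, and the degree bookkeeping you then do does yield the stated answer.

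The one genuine flaw is in how you propose to justify those inputs: the asserted rational equivalence $\SGr_m\simeq_{\Bbb Q}\Omega^{m-1}_1 S^{m-1}$ is false, because it conflates the monoid $\SGr_m$ of \emph{free} degree-one self-maps of $S^{m-1}$ with the \emph{based} mapping space, which is $\SF_{m-1}$. Taken literally, for $m$ even (so $m-1$ odd) it would make $\SGr_m$, hence $B\Gr_m$, rationally trivial, erasing exactly the Euler-class generator in degree $q$ that your computation (correctly) retains; so as written the key step would fail. The correct statement, and the one the paper uses, is the evaluation fibration $\SF_{q-1}\to\SGr_q\to S^{q-1}$: since $q-1$ is odd, the fiber is rationally contractible by Serre's computation, so $\SGr_q\simeq_{\Bbb Q}K(\Bbb Q,q-1)$ and $B\SGr_q\simeq_{\Bbb Q}K(\Bbb Q,q)$, and likewise $\SGr_{n+q+1}\simeq_{\Bbb Q}K(\Bbb Q,n+q)$ because $n+q$ is odd; rationally the $\Gr$ and $\SGr$ versions agree, and $\Gr_{n+q+1}/\Gr_q\simeq \SGr_{n+q+1}/\SGr_q$. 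With that substitution your argument becomes exactly the paper's. Finally, the hypothesis $q\ge 2n+2$ you invoke at the end is not needed for this lemma (it enters only in Theorem \ref{bigthm:computation}): the parity assumptions together with $n\ge 1$ already keep the degrees $q$ and $n+q+1$ apart, and there are no other rational classes to collide with.
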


\begin{proof} Let $\SGr_q$ denote topological monoid of degree one self-maps of $S^{q-1}$. Then
$B\SGr_q$ is the classifying space of oriented $(q-1)$-spherical fibrations and
\[
\SGr_{n+q+1}\! /\SGr_q \, \, \simeq\,\,  \Gr_{n+q+1}\!/\Gr_q \, .
\]
There is a fibration sequence
\[
\SF_{q-1} \to \SGr_q \to S^{q-1}\, ,
\]
in which $\SF_{q-1}$ denotes the topological monoid of based, degree one self-maps of $S^{q-1}$, and
the map $\SGr_q \to S^{q-1}$ evaluates a self-map at the basepoint.

Since $q-1$ is odd, $S^{q-1}$ has the rational homotopy type of the Eilenberg~MacLane space $K(\Bbb Q,q-1)$.
It follows that $\, \SF_{q-1}$ is rationally contractible, and we have  a rational equivalence $B\SGr_q \simeq_{\Bbb Q} K(\Bbb Q,q)$. Since $n+q$ is odd,
a similar argument shows $\, \SGr_{n+q+1} \simeq_{\Bbb Q} K(\Bbb Q,n+q)$. 

The result now follows these observations combined with the long exact sequence of
rational homotopy groups associated with the homotopy fiber sequence
\[
\SGr_{n+q+1}\to  \SGr_{n+q+1}\!/\SGr_q \to B\SGr_q\, . \qedhere
\] 
\end{proof}

\begin{lem} \label{lem:gqmodoq}  If $q$ is even, then is a rational homotopy equivalence
\[
\Gr_q\!/\Or_q \simeq \prod_{k=1}^{q/2-1} K(\Bbb Q,4k)\, .
\]
\end{lem}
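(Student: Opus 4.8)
The statement is that $\Gr_q/\Or_q$, for $q$ even, is rationally a product of Eilenberg--MacLane spaces $\prod_{k=1}^{q/2-1} K(\Bbb Q, 4k)$. First I would pass to the universal-cover–level objects in the usual way: since $\Gr_q/\Or_q$ is simply connected (both $B\Or_q$ and $B\Gr_q$ are simply connected, and the fibration relating them has a connected fiber for $q \ge 2$), rational homotopy theory applies cleanly, and it suffices to compute the rational homotopy groups $\pi_j(\Gr_q/\Or_q)\otimes\Bbb Q$ together with enough structure to conclude formality. The cleanest route is to compute the ranks from the long exact sequence of the fibration $\Gr_q/\Or_q \to B\Or_q \to B\Gr_q$ and then argue that a simply connected space all of whose rational homotopy is concentrated in even degrees, with the relevant Postnikov $k$-invariants rationally trivial, splits as a product of $K(\Bbb Q, 2m)$'s.

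\textbf{Key steps.} (1) Recall the rational homotopy of the stable pieces: $B\Or$ has $\pi_{4k}\otimes\Bbb Q = \Bbb Q$ (Pontryagin classes) and is rationally trivial in other degrees, while $B\Gr = B\SGr$ is rationally trivial (the stable homotopy groups of spheres are finite); hence $\Gr/\Or \simeq_{\Bbb Q} B\Or \simeq_{\Bbb Q} \prod_{k\ge 1} K(\Bbb Q, 4k)$. (2) Compute $\pi_j(B\Or_q)\otimes\Bbb Q$: using the fibration $S^{q-1} \to B\Or_{q-1} \to B\Or_q$ one finds, for $q$ even, $\pi_j(B\Or_q)\otimes \Bbb Q = \Bbb Q$ for $j = 4k$ with $1 \le k \le q/2 - 1$ and also for $j = q$ (the Euler class of the universal $q$-plane bundle, $q$ even, contributes a $\Bbb Q$ that dies stably), and is $0$ otherwise — this is the standard computation of rational Pontryagin/Euler classes of $BSO(q)$, and I would cite it rather than redo it. (3) Compute $\pi_j(B\Gr_q)\otimes\Bbb Q = \pi_j(B\SGr_q)\otimes\Bbb Q$: as in the proof of Lemma~\ref{lem:relativeG}, the fibration $\SF_{q-1}\to\SGr_q\to S^{q-1}$ plus rational finiteness of $\SF_{q-1}$ (for $q$ even, $S^{q-1}$ is a rational $K(\Bbb Q,q-1)$, so $\SF_{q-1}$ is rationally contractible) gives $B\SGr_q \simeq_{\Bbb Q} K(\Bbb Q, q)$. (4) Feed (2) and (3) into the long exact sequence for $\Gr_q/\Or_q \to B\Or_q \to B\Gr_q$: the single $\Bbb Q$ in degree $q$ of $\pi_*(B\Or_q)$ maps isomorphically onto the $\Bbb Q$ in degree $q$ of $\pi_*(B\Gr_q)$ (the Euler class of a $q$-plane bundle maps to the Euler class of its sphere bundle, which rationally generates), so it cancels; what survives in $\pi_*(\Gr_q/\Or_q)\otimes\Bbb Q$ is exactly $\Bbb Q$ in degrees $4k$, $1 \le k \le q/2 - 1$. (5) Upgrade the rational-homotopy-group computation to a rational homotopy equivalence with the product of Eilenberg--MacLane spaces: since all rational homotopy is in even degrees, the rational Postnikov tower has trivial $k$-invariants (a degree reason: $k$-invariants would live in $H^{*}(-;\Bbb Q)$ of a product of even $K(\Bbb Q,2m)$'s landing in an even group $\Bbb Q$ in degree $4k+1$, which is odd — so they vanish), giving the asserted splitting; alternatively, use that $\Gr_q/\Or_q$ is a rational loop space / H-space and invoke the structure theorem for rational H-spaces with homotopy in even degrees.

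\textbf{Main obstacle.} The genuinely delicate point is step (4): one must be sure the extra $\Bbb Q$ in degree $q$ of $\pi_*(B\Or_q)$ (coming from the Euler class, present only because $q$ is even) maps \emph{isomorphically} — not just nontrivially — to the degree-$q$ class in $\pi_*(B\Gr_q)$, so that it contributes nothing to $\pi_*(\Gr_q/\Or_q)\otimes\Bbb Q$ in degrees $q$ and $q-1$. This is where the parity hypothesis ``$q$ even'' is really used, and I would justify it via the compatibility of the Euler class of a vector bundle with that of its associated sphere fibration together with the identification $B\SGr_q\simeq_\Bbb Q K(\Bbb Q,q)$ being detected precisely by the rational Euler class. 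Everything else — the stable computations in (1)--(3) and the formality argument in (5) — is routine rational homotopy theory that I would state with references to standard sources.
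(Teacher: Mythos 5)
Your proposal is correct, and it rests on exactly the same inputs as the paper's proof: the rational cohomology of $B\SOr_q$ (Pontryagin classes plus the Euler class, i.e.\ the Milnor--Stasheff computation), the identification $B\SGr_q\simeq_{\Bbb Q}K(\Bbb Q,q)$ from Lemma \ref{lem:relativeG}, and the compatibility of the Euler class of a vector bundle with that of its sphere bundle --- which is indeed, as you say, where ``$q$ even'' enters. The difference is one of packaging: the paper works at the level of the map rather than of homotopy groups, using the rational splitting $B\SOr_q\simeq_{\Bbb Q}K(\Bbb Q,q)\times\prod_{k=1}^{q/2-1}K(\Bbb Q,4k)$ via $(e_q,\prod p_k)$ to see that $B\SOr_q\to B\SGr_q$ is rationally projection onto the first factor, so the fiber is visibly the remaining product and no Postnikov/formality reassembly (your step (5)) is needed; you instead extract the rational homotopy groups from the long exact sequence and then rebuild the product, which is a valid but slightly longer route (your even-degree $k$-invariant argument, equivalently the minimal-model observation that a simply connected space with $\pi_*\otimes\Bbb Q$ in even degrees is rationally a product of even Eilenberg--MacLane spaces, is standard and correct). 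Two small inaccuracies in your write-up do not affect the outcome: $B\Or_q$ and $B\Gr_q$ are not simply connected (each has $\pi_1=\Bbb Z/2$), though the fiber $\Gr_q/\Or_q\simeq\SGr_q/\SOr_q$ is, and the rationalized long exact sequence is exact in the relevant degrees regardless; and when $4\mid q$ the degree-$q$ rational homotopy of $B\SOr_q$ has rank two (Euler class and $p_{q/4}$), so ``the single $\Bbb Q$ in degree $q$'' should be understood as the Euler-class summand --- the rank count in your step (4) still comes out right because all you need is surjectivity onto $\pi_q(B\SGr_q)\otimes\Bbb Q$, which the indecomposability of the Euler class gives.
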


\begin{proof} According to \cite[thm.~15.9]{Milnor-Stasheff}, the map
\[
B\SOr_q\,\,  @> (e_q , \prod_{k=1}^{q/2-1}  p_k)>> K(\Bbb Q,q) \times \prod_{k=1}^{q/2-1} K(\Bbb Q,4k)\, ,
\]
is a rational homotopy equivalence, where $e_q$ is the Euler class and $p_k$ is the $k$-th Pontryagin class.

In the proof of Lemma \ref{lem:relativeG} it was observed that  there is rational homotopy equivalence
$B\SGr_q \simeq_{\Bbb Q} K(\Bbb Q,q)$ when $q$ is even.  With respect to these identifications, the map
\begin{equation} \label{eqn:so-sg}
B\SOr_q \to B\SGr_q 
\end{equation}
corresponds to projection onto the first factor $K(\Bbb Q,q)$, since the Euler class of an oriented spherical fibration is well-defined. Consequently, the rational homotopy type of the homotopy fiber of the map \eqref{eqn:so-sg} is
\[
 \prod_{k=1}^{q/2-1} K(\Bbb Q,4k)\, ,
 \]
and the proof is completed using the homotopy equivalence
$\, \Gr_q\!/\Or_q  \simeq \SGr_q\!/\SOr_q$.
\end{proof} 

Recall that $\mathfrak F_q$ was defined in the introduction as the homotopy fiber of the map
$\Gr_q\! /\Or_q \to \Gr\! /\Or$.

\begin{cor} \label{cor:F-rational} There is a rational homotopy equivalence
\[
\mathfrak F_q \, \, \simeq_{\Bbb Q} \,\, \prod_{k = q/2}^\infty K(\Bbb Q,4k-1)\, .
\]
\end{cor}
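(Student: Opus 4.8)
The plan is to compute the rational homotopy of $\mathfrak F_q$ directly from its defining fibration $\mathfrak F_q \to \Gr_q\!/\Or_q \to \Gr\!/\Or$, using the two inputs already available: Lemma \ref{lem:gqmodoq}, which identifies $\Gr_q\!/\Or_q$ rationally with $\prod_{k=1}^{q/2-1}K(\Bbb Q,4k)$, and the classical computation of $\pi_\ast(\Gr\!/\Or)\otimes\Bbb Q$. First I would recall that $\Gr\!/\Or$ is rationally equivalent to $\prod_{k\ge 1}K(\Bbb Q,4k)$: indeed $B\Or$ is rationally $\prod_{k\ge 1}K(\Bbb Q,4k)$ via the Pontryagin classes, $B\Gr$ is rationally trivial since $\Gr$ has finite homotopy groups (a theorem of Serre), and the fiber sequence $\Gr\!/\Or \to B\Or \to B\Gr$ then gives $\Gr\!/\Or \simeq_{\Bbb Q} B\Or \simeq_{\Bbb Q}\prod_{k\ge 1}K(\Bbb Q,4k)$.

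The key step is to understand the map $\Gr_q\!/\Or_q \to \Gr\!/\Or$ on rational homotopy. On the source, $\pi_{4k}(\Gr_q\!/\Or_q)\otimes\Bbb Q = \Bbb Q$ for $1\le k\le q/2-1$ and $0$ otherwise; on the target, $\pi_{4k}(\Gr\!/\Or)\otimes\Bbb Q = \Bbb Q$ for all $k\ge 1$. I claim the map is a rational isomorphism on $\pi_{4k}$ for $k\le q/2-1$ and zero otherwise. This follows because both the stabilization $\Gr_q\!/\Or_q \to \Gr\!/\Or$ and the corresponding map $B\SOr_q \to B\SOr$ are compatible, and on $B\SOr_q \to B\SOr$ the Pontryagin classes $p_1,\dots,p_{q/2-1}$ pull back faithfully (this is exactly what Lemma \ref{lem:gqmodoq} is extracting from \cite[thm.~15.9]{Milnor-Stasheff}); alternatively one notes the composite $\Gr_q\!/\Or_q \to \Gr\!/\Or$ is $(2q-3)$-connected (as used in the proof of Corollary \ref{bigcor:cor1}), forcing an isomorphism on $\pi_j\otimes\Bbb Q$ for $j\le 2q-4$, hence in particular for all $j=4k$ with $k\le q/2-1$ (since $4k\le 2q-4$), while for $j=4k\ge 2q$ the source group already vanishes.

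Granting this, the long exact sequence of rational homotopy groups for $\mathfrak F_q \to \Gr_q\!/\Or_q \to \Gr\!/\Or$ collapses: in degrees $4k$ with $k\le q/2-1$ the map on homotopy is an isomorphism, so $\pi_{4k}(\mathfrak F_q)\otimes\Bbb Q = 0$ and also $\pi_{4k-1}(\mathfrak F_q)\otimes\Bbb Q = 0$ there; in degrees $4k$ with $k\ge q/2$ the source vanishes, so $\pi_{4k}(\Gr\!/\Or)\otimes\Bbb Q = \Bbb Q$ maps isomorphically onto $\pi_{4k-1}(\mathfrak F_q)\otimes\Bbb Q$ via the connecting map; all other rational homotopy groups vanish. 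Thus $\pi_j(\mathfrak F_q)\otimes\Bbb Q = \Bbb Q$ precisely when $j = 4k-1$ for some $k\ge q/2$, and zero otherwise, which is the asserted rational equivalence $\mathfrak F_q \simeq_{\Bbb Q}\prod_{k=q/2}^\infty K(\Bbb Q,4k-1)$ (the product being a formal space with free rational homotopy, so the homotopy-group computation determines the rational homotopy type). The main obstacle is pinning down that the stabilization map $\Gr_q\!/\Or_q \to \Gr\!/\Or$ really is injective on the relevant rational homotopy groups rather than merely nonzero; invoking the $(2q-3)$-connectivity already cited in the proof of Corollary \ref{bigcor:cor1} is the cleanest way to dispatch this, since it handles all degrees $4k$ with $k\le q/2-1$ at once.
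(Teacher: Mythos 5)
Your computation of the rational homotopy groups of $\mathfrak F_q$ is correct, and the inputs you use (Lemma \ref{lem:gqmodoq}, the rational triviality of $B\Gr$, the $(2q-3)$-connectivity of $\Gr_q\!/\Or_q\to\Gr\!/\Or$) all check out. The gap is the last step: passing from an abstract isomorphism $\pi_j(\mathfrak F_q)\otimes\Bbb Q\cong\pi_j\bigl(\prod_{k\ge q/2}K(\Bbb Q,4k-1)\bigr)$ to the asserted rational homotopy \emph{equivalence}. Knowing the rational homotopy groups of a simply connected space does not determine its rational homotopy type --- compare $S^2$ with $K(\Bbb Q,2)\times K(\Bbb Q,3)$ --- and your parenthetical remark that the target product is formal with free rational homotopy says nothing about $\mathfrak F_q$ itself; one would still have to show that all rational $k$-invariants of $\mathfrak F_q$ vanish, and degree considerations alone do not force this (in a minimal Sullivan model a generator of degree $4k-1$ could a priori support a differential hitting a product of four odd-degree generators). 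This is not a pedantic point for the paper: what gets used later is precisely that $\mathfrak F_q$ has the rational homotopy type of a product of Eilenberg--MacLane spaces (hence of a loop space), which is what trivializes the fibration $\Omega^n\mathfrak F_q\to\map(S^n,\mathfrak F_q)\to\mathfrak F_q$ rationally in the proof of Theorem \ref{bigthm:computation}.

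The repair is essentially your own first alternative, used at the level of spaces rather than homotopy groups, and it is how the paper argues. Since the stable Pontryagin classes restrict to the unstable ones, the rational equivalence $\Gr_q\!/\Or_q\simeq_{\Bbb Q}\prod_{k=1}^{q/2-1}K(\Bbb Q,4k)$ of Lemma \ref{lem:gqmodoq} and the rational equivalence $\Gr\!/\Or\simeq_{\Bbb Q}\prod_{k=1}^{\infty}K(\Bbb Q,4k)$ given by $\prod_k p_k$ fit into a homotopy commutative square identifying the map $\Gr_q\!/\Or_q\to\Gr\!/\Or$ rationally with the inclusion of the first $q/2-1$ factors. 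Taking homotopy fibers of this identified map (legitimate since everything in sight is simply connected) immediately yields $\mathfrak F_q\simeq_{\Bbb Q}\Omega\prod_{k\ge q/2}K(\Bbb Q,4k)\simeq\prod_{k\ge q/2}K(\Bbb Q,4k-1)$, with no long exact sequence and no appeal to the $(2q-3)$-connectivity of the map. In short: keep your Pontryagin-class compatibility argument, but use it to identify the map up to rational homotopy, not merely its effect on $\pi_*\otimes\Bbb Q$.
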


\begin{proof}  Analogous to Lemma \ref{lem:gqmodoq}, one has a rational equivalence
\[
\Gr\!/\Or @> \prod_{k=1}^\infty p_k >> \prod_{k=1}^\infty K(\Bbb Q,4k)
\]
The map $\Gr_q\!/\Or_q \to \Gr\! /\Or$ corresponds to the inclusion into the first $(q/2-1)$-factors, so the inclusion
is immediate.
\end{proof}

In the case $\Sigma = S^n$, the stable normal invariant $\eta\:S^n \to \Gr\! /\Or$ is null homotopic since $S^n$ is the boundary of the disk $D^{n+1}$, the latter which is a parallelizable manifold. Consequently,  there is a homotopy equivalence
\[
\ell_q(\eta) \simeq\map(S^n,\mathfrak F_q)\, .
\]
The latter sits in a fibration sequence
\[
\Omega^n \mathfrak F_q\to \map(S^n,\mathfrak F_q) \to \mathfrak F_q
\]
which is rationally homotopically trivial since $\mathfrak F_q$ has the rational homotopy type of 
a loop space.

Then by Corollary \ref{cor:F-rational}, we obtain a rational homotopy equivalence
\[
\ell_q(\eta) \,\, \simeq_{\Bbb Q} \,\, \prod_{k=q/2}^\infty K(\Bbb Q,4k{-}n{-}1) \times \prod_{k=q/2}^\infty K(\Bbb Q,4k{-}1) \, .
\]
Therefore,
\begin{equation}\label{eqn:ell-eta}
\pi_j(\ell_q(\eta))\otimes {\Bbb Q} \cong  \begin{cases}
\Bbb Q & \text{ \rm if } j +n \equiv 3\! \!\! \mod 4 \text{ \rm and } j \ge 2q-n-1, \text{ \rm or } \\ 
&   \text{ \rm if } j \equiv 3\!\!\! \mod 4 \text{ \rm and } j \ge 2q -1, \\
0 & \text{ \rm otherwise}.
\end{cases}
\end{equation}

\begin{proof}[Proof of Theorem \ref{bigthm:computation}]
By the  fibration sequence \eqref{eqn:fiber-sequence}, there is a long exact sequence
\[ \small
\xymatrix{
\cdots \ar[r] &
\pi_{j+1}(\Gr_{n+q+1}\!/\Gr_q)\otimes \Bbb Q \ar[r]^(.55){\partial_{j+1}} & \pi_j(\ell_q(\eta)) \otimes \Bbb Q) \ar[r] & 
\pi_j(E^b(S^n,S^{n+q})) \otimes \Bbb Q\ar[dll]  \\
  & \pi_{j}(\Gr_{n+q+1}\!/\Gr_q)\otimes \Bbb Q \ar[r]_(.55){\partial_j} &  \pi_{j-1}(\ell_q(\eta)) \otimes \Bbb Q \ar[r] & \cdots  \\
 }
\]
By Lemma \ref{lem:gqmodoq}, \eqref{eqn:ell-eta},
and the constraints on $n$ and $q$,  it is readily checked that the boundary homomorphisms $\partial_{j+1}$ and $\partial_j$ are trivial. Moreover, the pair of vector spaces $ \pi_j(\ell_q(\eta)) \otimes \Bbb Q), \pi_{j}(\Gr_{n+q+1}\!/\Gr_q)\otimes \Bbb Q$ cannot be simultaneously non-trivial, and when one of these vector spaces is non-trivial, it has dimension one. 
 It is also clear that $\pi_j(\ell_q(\eta)) \otimes \Bbb Q), \pi_{j}(\Gr_{n+q+1}\!/\Gr_q)\otimes \Bbb Q$ are simultaneously trivial if and only
$j \notin \scr J_{n,q}$.
\end{proof}

\appendix

\section{Further characterizations of acyclic maps} \label{sec:app}

Here we  provide two other characterisations of acyclic maps. 

\subsection{The fiberwise suspension characterization} Let $\Top_Y$ denote the category of spaces over  a fixed space $Y$.
Then one has the {\it unreduced fiberwise suspension}.
\[
S_Y X := (Y \times 0) \cup X\times [0,1] \cup (Y \times 1)  \in \Top_Y
\]

\begin{lem} $f\: X\to Y$ is acyclic if and only if $S_YX$ 
is weakly equivalent to the terminal object, i.e., the map $S_Y X \to Y$ is a weak homotopy equivalence of spaces.
\end{lem}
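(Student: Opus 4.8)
The plan is to translate the acyclicity condition and the contractibility of $S_Y X$ into the same statement about the relative homology of the pair $(S_Y X, Y)$, using the fact that the fiberwise suspension is built from two cones glued along $X$.

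First I would fix a local system $\scr E$ on $Y$ and unwind the definition: the unreduced fiberwise suspension $S_Y X$ comes with two natural sections $Y\times 0 \hookrightarrow S_Y X$ and $Y\times 1\hookrightarrow S_Y X$, and the projection $S_Y X\to Y$ restricted to either section is the identity. Consequently $S_Y X\to Y$ is a weak homotopy equivalence if and only if it is a homology isomorphism with all local coefficients (the space $S_Y X$ is visibly path-connected onto each component of $Y$, and one checks the map is a $\pi_1$-isomorphism since $Y\times 0$ already surjects; then apply the relative Hurewicz/Whitehead theorem to the universal cover, or argue componentwise). So the statement reduces to: $f_\ast\colon H_\ast(X;f^\ast\scr E)\to H_\ast(Y;\scr E)$ is an isomorphism for all $\scr E$ iff $H_\ast(S_Y X;p^\ast\scr E)\to H_\ast(Y;\scr E)$ is an isomorphism for all $\scr E$, where $p\colon S_Y X\to Y$.

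The key computation is a Mayer--Vietoris argument. Write $S_Y X = C_- \cup_X C_+$, where $C_\pm = X\times[0,\tfrac12]\cup(Y\times\{0\})$ (resp. the mirror) are the two fiberwise cones; each $C_\pm$ deformation retracts onto the copy of $Y$ it contains, so $H_\ast(C_\pm;p^\ast\scr E)\cong H_\ast(Y;\scr E)$, and the intersection $C_-\cap C_+$ deformation retracts onto $X$ with $H_\ast\cong H_\ast(X;f^\ast\scr E)$. The Mayer--Vietoris sequence then reads
\[
\cdots \to H_\ast(X;f^\ast\scr E) \xrightarrow{(f_\ast,\,-f_\ast)} H_\ast(Y;\scr E)\oplus H_\ast(Y;\scr E) \to H_\ast(S_Y X;p^\ast\scr E)\to \cdots
\]
and the map $H_\ast(S_Y X;p^\ast\scr E)\to H_\ast(Y;\scr E)$ is, under these identifications, induced by the fold map on the two summands. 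A diagram chase shows $p_\ast$ is an isomorphism for all $\scr E$ precisely when the connecting maps vanish and $(f_\ast,-f_\ast)$ has the expected kernel and cokernel, which happens exactly when $f_\ast$ is an isomorphism for all $\scr E$ — this is the only place where the pair of identical maps into $H_\ast(Y;\scr E)^{\oplus 2}$ must be exploited, and it is a purely algebraic observation about a two-variable fold.

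I expect the main obstacle to be the $\pi_1$-bookkeeping in the first paragraph: verifying that a fiberwise-homology equivalence $S_Y X\to Y$ is genuinely a \emph{weak} equivalence (not merely a homology equivalence) requires knowing it induces an isomorphism on fundamental groups, and then invoking the relative Hurewicz theorem over the universal cover — or, more cleanly, observing that local-coefficient homology isomorphism over \emph{all} systems, combined with the existing section, forces the map to be a weak equivalence by the same argument used in the proof of Lemma \ref{lem:induced}. Once that reduction is in place, the Mayer--Vietoris step is routine, and the equivalence with acyclicity of $f$ is immediate from the definition.
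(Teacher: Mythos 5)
The Mayer--Vietoris half of your argument is fine, and it is the standard direct route (note that the paper's own ``proof'' is just a citation of Raptis's theorem, so a self-contained argument like yours is a legitimate alternative -- but then it must be complete). The genuine gap is the $\pi_1$-step in the direction ``$f$ acyclic $\Rightarrow$ $S_YX\to Y$ a weak equivalence.'' The section $Y\times 0\hookrightarrow S_YX$ only shows that $\pi_1(S_YX)\to\pi_1(Y)$ is \emph{split surjective}; it gives no injectivity, so the parenthetical ``one checks the map is a $\pi_1$-isomorphism since $Y\times 0$ already surjects'' is not a proof. Worse, the fallback principle in your last paragraph -- that a homology isomorphism with all local coefficients on $Y$ together with a section forces a weak equivalence -- is false in general: an acyclic space $Z$ with nontrivial perfect fundamental group (e.g.\ the punctured Poincar\'e homology $3$-sphere appearing elsewhere in the paper) maps acyclically to a point, admits a section, and is not weakly contractible. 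So your first paragraph does not legitimately reduce the lemma to local-coefficient homology, and the appeal to ``the same argument as in Lemma \ref{lem:induced}'' does not apply verbatim, since there simple connectivity of the pushout came for free from the target being $S^n$.

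What is actually needed is this: $S_YX$ is the double mapping cylinder of $Y\leftarrow X\to Y$, so van Kampen gives $\pi_1(S_YX)\cong\pi_1(Y)\ast_{\pi_1(X)}\pi_1(Y)$, and the fold map to $\pi_1(Y)$ is an isomorphism precisely when $\pi_1(X)\to\pi_1(Y)$ is surjective (also $\pi_0$-surjectivity of $f$ is what makes $\pi_0(S_YX)\to\pi_0(Y)$ a bijection -- again not a consequence of the section alone). This surjectivity must be extracted from acyclicity, for instance from the degree-zero part of the hypothesis with coefficients in the group ring: $H_0(X;f^\ast\mathbb{Z}[\pi_1(Y)])\cong\mathbb{Z}[\pi_1(Y)/\mathrm{im}\,\pi_1(f)]$ must map isomorphically to $H_0(Y;\mathbb{Z}[\pi_1(Y)])\cong\mathbb{Z}$, forcing $\pi_1(f)$ onto (this is standard, cf.\ Hausmann--Husemoller); alternatively one can argue that the kernel of the fold map acts freely on the Bass--Serre tree, hence is free, and a perfect free group is trivial. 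Once the $\pi_1$-isomorphism is in place, your Mayer--Vietoris computation with $\mathbb{Z}[\pi_1(Y)]$-coefficients plus the relative Hurewicz theorem applied to universal covers does yield the weak equivalence, and your converse direction is correct as written: the fold being surjective kills the connecting maps, and the resulting short exact sequences identify $f_\ast$ with an isomorphism for every local system. So the proposal is repairable, but as it stands the key non-routine point of the lemma is exactly the step that is asserted rather than proved.
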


\begin{proof} This is a reformulation of Raptis' \cite[thm.~2.1]{Raptis} ]
\end{proof}

\subsection{The parametrized spectra characterization}
A  parametrized spectrum $\scr E$ over $Y$ gives rise to a cohomology theory on $\Top_Y$. If $X\in \Top_Y$ is
a object, then
\[
H^\bullet(X;\scr E)
\]
is the spectrum of sections of $\scr E$ taken along the  structure map $X\to Y$. The homotopy groups
of this spectrum define the cohomology of $X$ with coefficients in $\scr E$.

\begin{prop} \label{prop:acyclic-characterization}
A map $X\to Y$ of connected spaces acyclic if and only if the induced map of spectra
\[
H^\bullet(Y;\scr E) \to H^\bullet(X;\scr E)
\]
is a weak equivalence for all parametrized spectra $\scr E$.
\end{prop}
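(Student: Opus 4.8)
The plan is to reduce the statement about arbitrary parametrized spectra $\scr E$ to the defining property of acyclic maps, namely that $f$ induces an isomorphism on homology with all local coefficients. First I would establish the easy direction: if $f\colon X \to Y$ is such that $H^\bullet(Y;\scr E) \to H^\bullet(X;\scr E)$ is a weak equivalence for every parametrized spectrum $\scr E$, then in particular we may take $\scr E$ to be the parametrized Eilenberg--MacLane spectrum $H\scr A$ associated with a local system $\scr A$ of abelian groups on $Y$; since $H^\bullet(X;H\scr A)$ and $H^\bullet(Y;H\scr A)$ compute ordinary cohomology with local coefficients, and since dualizing (using that $X$, $Y$ are connected and, after replacing by CW approximations, have the homotopy types of CW complexes) exchanges the cohomology isomorphism with the homology isomorphism $H_\ast(X;f^\ast\scr A) \to H_\ast(Y;\scr A)$, one recovers acyclicity. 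A clean way to phrase this is to invoke Lemma \ref{lem:acyclic-characterization}/Raptis' characterization already used in the paper, or simply the definition.

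For the substantive direction, assume $f$ is acyclic and let $\scr E$ be an arbitrary parametrized spectrum over $Y$. The key step is a Postnikov/cellular induction: the map of section spectra $H^\bullet(Y;\scr E) \to H^\bullet(X;f^\ast\scr E)$ is built as an inverse limit of maps on Postnikov truncations $\tau_{\le m}\scr E$, and the successive fibers are controlled by the Eilenberg--MacLane parametrized spectra $\Sigma^k H\pi_k(\scr E)$, where $\pi_k(\scr E)$ is a local system on $Y$. On each such layer the induced map is a weak equivalence precisely by acyclicity of $f$ (this is the local-coefficient homology/cohomology isomorphism again, now in each degree). Passing to the limit — using that section spectra take homotopy limits of coefficients to homotopy limits of spectra, together with a $\lim^1$/convergence argument valid because $X$ and $Y$ are connected and the Postnikov tower of $\scr E$ converges — yields the claim. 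I would organize this as: (1) reduce to $f$ a cofibration between CW complexes; (2) reduce to $\scr E$ truncated, then to a single Eilenberg--MacLane layer; (3) handle the Eilenberg--MacLane case directly from the definition of acyclic map; (4) reassemble via the tower.

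The main obstacle I anticipate is step (4): the convergence of the Postnikov tower of a parametrized spectrum and the commutation of the "sections along $X \to Y$" functor with the relevant homotopy limit, together with the $\lim^1$ term that appears when passing from the truncated statements to the untruncated one. One must be careful that the tower of spectra $\{H^\bullet(X;\tau_{\le m}\scr E)\}_m$ is a tower of fibrations whose inverse limit is $H^\bullet(X;\scr E)$ and that the $\lim^1$-term vanishes (or is identified compatibly on both sides), which uses connectivity estimates on the fibers $\Sigma^k H\pi_k(\scr E)$ relative to the (finite-dimensional, if $X$ is finitely dominated, or at least CW) structure of the base. Once this bookkeeping is in place, the rest is formal. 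An alternative that sidesteps some of this is to appeal directly to the fiberwise-suspension characterization (the lemma immediately preceding): $f$ acyclic means $S_Y X \to Y$ is a weak equivalence, and any parametrized cohomology theory sends weak equivalences in $\Top_Y$ to weak equivalences, so the statement follows once one checks that $H^\bullet(X;\scr E)$ depends on $X \in \Top_Y$ only through its fiberwise suspension up to the relevant equivalence — which is a standard stability property of parametrized spectra. I would present the Postnikov argument as the primary proof and mention the fiberwise-suspension shortcut as a remark.
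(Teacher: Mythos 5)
Your proposal takes a genuinely different route from the paper, and the comparison is instructive. The paper does not induct over the coefficients at all: it uses the Hausmann--Husemoller characterization of acyclicity, namely that the map $X\cup_F CF\to Y$ is a weak equivalence, where $F$ is the homotopy fiber of $f$. This gives a fiber sequence of section spectra $\tilde H^\bullet(Y;\scr E)\to \tilde H^\bullet(X;\scr E)\to \tilde H^\bullet(F;\scr E)$, and the third term vanishes for trivial reasons: since $F\to Y$ is null homotopic, the restriction of $\scr E$ to $F$ is a constant parametrized spectrum $F\times \scr W$, so $\tilde H^\bullet(F;\scr E)\simeq \map_\ast(F,\scr W)\simeq\map_\ast(\Sigma F,\Sigma\scr W)$, which is contractible because $F$ is acyclic and hence $\Sigma F$ is weakly contractible. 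This entirely avoids Postnikov towers, convergence questions, and any induction over Eilenberg--MacLane layers. Your converse direction (specialize $\scr E$ to parametrized Eilenberg--MacLane spectra of local systems) is the same as the paper's, and is equally terse in both treatments.

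The genuine gap in your primary argument is exactly the step you flag: the passage from arbitrary parametrized spectra to Eilenberg--MacLane layers. The reduction to bounded-above coefficients is unproblematic (sections and $f^\ast$ commute with the homotopy limit over Postnikov truncations, and a levelwise equivalence of towers induces an equivalence on homotopy limits, so no $\lim^1$ analysis is needed there), but a truncation $\tau_{\le m}\scr E$ is still unbounded below, so it is \emph{not} a finite extension of Eilenberg--MacLane layers; you must also compare with connective covers, and section spectra do not commute with the relevant filtered colimit in general. Your parenthetical appeal to finite-dimensionality or finite domination of the base is not available: the proposition assumes only that $X$ and $Y$ are connected. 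The step can be repaired without finiteness --- one shows that the section spectrum of a parametrized spectrum whose fiberwise homotopy vanishes above degree $b$ itself has vanishing homotopy above degree $b$ (its associated fiberwise infinite loop spaces in the relevant range have contractible fibers, so their section spaces are contractible), and then for each fixed homotopy degree one compares $\scr E$ with $\tau_{\ge -n}\scr E$ for $n$ large --- but as written this is an unproven claim rather than a proof. Your fiberwise-suspension alternative is actually closer to complete: since sections carry the fiberwise pushout defining $S_YX$ to a homotopy pullback, acyclicity (via Raptis) gives $H^\bullet(Y;\scr E)\simeq H^\bullet(Y;\scr E)\times^h_{H^\bullet(X;\scr E)}H^\bullet(Y;\scr E)$, whence the homotopy fiber of $H^\bullet(Y;\scr E)\to H^\bullet(X;\scr E)$ is contractible and, these being spectra, the map is an equivalence; making that remark precise would serve you better than the tower argument.
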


\begin{proof}  Assume $X\to Y$ is acyclic and choose a basepoint in $X$; then $Y$ inherits a basepoint and $f$ is a
based map. 
The reduced cohomology is given by
\[
\tilde H^\bullet(X;\scr E) :=  \text{fiber } (\tilde H^\bullet(X;\scr E) \to  \tilde H^\bullet(\ast;\scr E))\, .
\]
Let $F$ denote a homotopy fiber of the map $X\to Y$ and let $X\cup_F CF$ denote the mapping cone
of the map $F\to X$. Then the map 
\[
X \cup_F CF \to Y
\]
is a weak equivalence  \cite[thm.~2.5]{Hausmann-Husemoller}. Consequently, there is fiber sequence of spectra
\[
\tilde H^\bullet(Y;\scr E) \to \tilde H^\bullet(X;\scr E) @>>> \tilde H^\bullet(F;\scr E)\, .
\]
It is enough to show that $\tilde H^\bullet(F;\scr E)$ is weakly contractible.
As the map $F\to Y$ is null homotopic, the base change of $\scr E$ to $F$ is of the form $F\times \scr W$, for some ordinary
spectrum $W$. Consequently, 
\[
\tilde H^\bullet(F;\scr E) \simeq \map_\ast(F,\scr W) \simeq  \map_\ast(\Sigma F,\Sigma \scr W)
\]
and since $\Sigma F$ is weakly contractible, it follows that $\tilde H^\bullet(F;\scr E)$ is weakly contractible.

Conversely, if $
H^\bullet(Y;\scr E) \to H^\bullet(X;\scr E)
$ is a weak equivalence for all $
\scr E$ is follows that it is a weak equivalence for all $\scr E$ whose fibers are Eilenberg-Mac~Lane spectra. But the latter
correspond to the local coefficient systems on $Y$ and the cohomology of $Y$ with coefficients in such $\scr E$
are then the cohomology with twisted coefficients.
\end{proof}

%\begin{cor}[Compare ??] Assume that $M$ is $(2m-n)$-connected, stably paralllizable and $3m +3 \le 2n$. 
% Then there is a smooth embedding of $M$ in $S^n$.
% \end{cor}

% \begin{proof} In this case,  the obstruction group $\Omega$ is trivial since $(M\times M)/\Delta(M)$ is $r$-connected.
% \end{proof}

%\begin{thebibliography}
%\bibliographystyle{elsarticle-num}
%\bibliography{john}

%\end{thebibliography}
%\end{thebibliography}

\end{document}